\documentclass[]{article}

\usepackage{authblk}
\usepackage{amsmath}
\usepackage{amssymb}
\usepackage{amsthm}
\usepackage{hyperref}
\usepackage{cleveref}
\usepackage{paralist}
\usepackage{csquotes}
\usepackage{bm}
\usepackage{subcaption}
\usepackage[backend=bibtex,giveninits=true]{biblatex}
% %
\DeclareMathOperator*{\esssup}{ess\,sup}

\usepackage[colorinlistoftodos,prependcaption,textsize=tiny]{todonotes}

% \hbadness=10000
% \vbadness=10000

\newcommand{\iu}{\mathrm{i}\mkern1mu}
\newtheorem{theorem}{Theorem}
\newtheorem{lemma}[theorem]{Lemma}
\newtheorem{proposition}[theorem]{Proposition}

\newtheorem{definition}[theorem]{Definition}
\newtheorem{assumption}[theorem]{Assumption}
\newtheorem{remark}[theorem]{Remark}
\newtheorem{problem}[theorem]{Problem}

\crefname{definition}{definition}{definitions}
\Crefname{definition}{Definition}{Definitions}
\Crefname{problem}{Problem}{Problems}
\crefname{problem}{problem}{problems}
\crefname{remark}{remark}{remarks}
\Crefname{remark}{Remark}{Remarks}
\crefname{assumption}{assumption}{assumptions}
\Crefname{assumption}{Assumption}{Assumptions}

\numberwithin{theorem}{section}
\numberwithin{equation}{section}

\renewcommand\Re{\operatorname{Re}}

% Math Letters

\newcommand{\cA}{\mathcal{A}}

\newcommand{\cD}{\mathcal{D}}
\newcommand{\cF}{\mathcal{F}}

\newcommand{\cO}{\mathcal{O}}
\newcommand{\cP}{\mathcal{P}}

\newcommand{\cT}{\mathcal{T}}

\newcommand{\cI}{\mathcal{I}}
\newcommand{\cL}{\mathcal{L}}

\newcommand{\IC}{\mathbb{C}}

\newcommand{\IN}{\mathbb{N}}
\newcommand{\IP}{\mathbb{P}}
\newcommand{\IR}{\mathbb{R}}

\newcommand{\bA}{\bm{A}}

\newcommand{\bbf}{\bm{f}}

\newcommand{\bu}{\bm{u}}
\newcommand{\bv}{\bm{v}}

% Math
\renewcommand{\d}{{\text{d}}}
\newcommand{\pr}[3]{\left(#2, #3\right)_{#1}}
\newcommand{\dpr}[3]{\left\langle #2, #3\right\rangle_{#1'\times #1}}
\newcommand{\dprs}[3]{\left\langle #2, #3\right\rangle_{#1}}
\newcommand{\ceil}[1]{\lceil #1\rceil}
\newcommand{\floor}[1]{\lfloor #1\rfloor}

\newcommand{\norm}[2]{{\Vert #2 \Vert}_{#1}}
\newcommand{\snorm}[2]{{\vert #2 \vert}_{#1}}
\newcommand{\abs}[1]{{\left\vert #1 \right\vert}}

\newcommand{\seminorm}[2]{\left\vert #2 \right\vert_{#1}}
\newcommand{\tripnorm}[2]{\left\lvert\hspace{-1 pt}\left\lvert\hspace{-1 pt}\left\lvert {#2}\right\lvert\hspace{-1 pt}\right\lvert\hspace{-1 pt}\right\lvert_{#1}}

\DeclareMathOperator{\spann}{span}

\newcommand\range[1]{{\varrho(#1)}}
\newcommand\average[1]{{\mu(#1)}}

\setcounter{equation}{0}

% spaces
\newcommand{\lp}[2]{L_{#1}\left(#2\right)}
\newcommand{\hs}[2]{H^{#1}\left(#2\right)}
\newcommand{\hso}[2]{H_0^{#1}\left(#2\right)}

\newcommand{\hst}[2]{\widetilde{H}^{#1}\left(#2\right)}
\newcommand{\hstl}[2]{\widetilde{H}_L^{#1}\left(#2\right)}
\newcommand{\hstr}[2]{\widetilde{H}_R^{#1}\left(#2\right)}
\newcommand{\cont}[2]{C^{#1}\left(#2\right)}
\newcommand{\contl}[2]{C^{#1}_{L}\left(#2\right)}
\newcommand{\contr}[2]{C^{#1}_{R}\left(#2\right)}
\newcommand{\conto}[2]{C_0^{#1}\left(#2\right)}

% fractional

\newcommand{\flder}[2]{{}_{0}\cD_x^{#1}{#2}}
\newcommand{\hflder}[2]{{}_{0}\hat\cD_x^{#1}{#2}}
\newcommand{\frder}[2]{{}_{x}\cD_1^{#1}{#2}}

\newcommand{\flderd}[3]{\cD_{#1}^{#2}\, {#3}}

\newcommand{\flint}[2]{{}_{0}\cI_x^{#1}{#2}}
\newcommand{\frint}[2]{{}_{x}\cI_1^{#1}{#2}}
\newcommand{\sh}{\frac{s}{2}}

% form
\newcommand{\bil}[2]{a\!\left( {#1}, {#2}\right)}

\newcommand{\lin}[1]{F\!\left({#1}\right)}

% Operator
\DeclareMathOperator*{\essinf}{ess\,inf}

%hyphenation
%\hyphenation{Rie-mann}

\usepackage{a4wide}
%-------------------------------------------------------------------------------------------------------
%-------------------------------------------------------------------------------------------------------
\title{Fractional differential equations: non-constant coefficients, simulation and model reduction}
\author{Ruben Aylwin}
\author{G\"oksu Oruc}
\author{Karsten Urban}
\affil{Ulm University, Institute of Numerical Mathematics, Helmholtz\-str.\ 20, 89081 Ulm, Germany}
\addbibresource{tez.bib}
\begin{document}

% \author{Ruben Aylwin}
% \address{Ulm University, Institute of Numerical Mathematics, Helmholtz\-str.\ 20, 89081 Ulm, Germany}
% \curraddr{}
% \email{ruben.aylwin-pincheira@uni-ulm.de}

% \author{G\"{o}ksu Oruc}
% \address{Ulm University, Institute of Numerical Mathematics, Helmholtz\-str.\ 20, 89081 Ulm, Germany}
% \curraddr{}
% \email{goeksu.oruc@uni-ulm.de}

% \author{Karsten Urban}
% \address{Ulm University, Institute of Numerical Mathematics, Helmholtz\-str.\ 20, 89081 Ulm, Germany}
% \curraddr{}
% \email{karsten.urban@uni-ulm.de}

% \thanks{The work on this paper has been funded by the \emph{Federal Ministry for Economic Affairs and Energy of Germany} (BMWE -- Bundesministerium für Wirtschaft und Energie der Bundesrepublik Deutschland).}

% \subjclass{
%   Primary 26A33,% Fractional derivatives and integrals
%   34K37,%Functional-differential equations with fractional derivatives
%   65N30%Finite element, Rayleigh-Ritz and Galerkin methods for boundary value problems involving PDEs
% }

% \keywords{Fractional boundary value problem, variational formulation, Riemann-Liouville fractional derivatives, model order reduction, greedy algorithm, error estimation, Kolmogorov $n$-width}

\date{\today}
%-------------------------------------------------------------------------------------------------------

\maketitle
\begin{abstract}
   We consider boundary value problems with Riemann-Liouville fractional derivatives of order $s \in (1,2)$ with non-constant diffusion and reaction coefficients. A variational formulation is derived and analyzed leading to the well-posedness of the continuous problem and its Finite Element discretization. Then, the Reduced Basis Method through a greedy algorithm for parametric diffusion and reaction coefficients is analyzed. Its convergence properties, and in particular the decay of the Kolmogorov $n$-width, are seen to depend on the fractional order $s$. Finally, numerical results confirming our findings are presented.
 \end{abstract}

%-------------------------------------------------------------------------------------------------------
\section{Introduction}
\label{sec:intro}
%-------------------------------------------------------------------------------------------------------
Differential equations with fractional derivatives have been widely studied in the literature, which is also due to the fact that those type of equations model phenomena, which are relevant in various fields. In this paper, we consider fractional order source problems with non-constant coefficients of the form
\begin{subequations}
\label{eq:PDE}
\begin{align}
	-\flder{\sh}{\left(d(x)\, \flder{\sh}{u}\right)}  + r(x)\, u(x)
		&=f(x), \qquad x\in\Omega:=(0,1),  
		\label{eq:problem} \\
	u(0)=0, \, u(1)&=0, \label{eq:bv}
\end{align}
\end{subequations}
where $u:\Omega\to\IR$ denotes the unknown function, $\flder{\beta}{}$ is the left-sided Riemann-Liouville fractional derivative of order
$\beta > 0$, $s \in (1, 2)$ is the order of the fractional differential equation, $r:\Omega\to\IR$ is the reaction coefficient,
$d: \Omega\to\IR$ is the diffusion coefficient and $f: \Omega\to\IR$ is the right-hand side. 

Systems like \cref{eq:PDE}  have been studied in \cite{jin} for the case of constant diffusion, i.e., $d\equiv 1$. We are interested in the more general case also since the fractional operator in \cref{eq:problem} appears, for example, when considering the fractional Fick's law of diffusion \cite{schumer2001eulerian} together with fractional conservation of mass \cite{wheatcraft2008fractional}. Different operators involving non-constant coefficients have been considered also in \cite{mao2016efficient, wang2013wellposedness}, while applications of fractional derivatives include sub-difussive \cite{jin2019subdiffusion,mustapha2018fem} and super-diffusive \cite{li2018time} processes, which also motivate the present article.

One source of motivation for this paper is model reduction for problems of fractional order. Since those kind of problems are non-local, model reduction might offer additional potential for reduction. To this end, we consider a parameterized version of \cref{eq:PDE}, where $d$, $r$ and the right-hand side $f$ may vary depending on values of parameters $\mu\in\cP$, where $\cP\subset\IR^P$. One might think of a material with different diffusion coefficients in different areas. The Reduced Basis Method (RBM) is a well-established model order reduction technique for parameterized partial differential equations (PPDEs), see e.g.\ \cite{bonito2020reduced,Haasdonk:RB,Rozza:RB,Quarteroni:RB,Urban:RB}. We aim at extending the RBM to fractional-type problems. The RBM relies on a well-posed variational formulation of the parameterized problem. Hence, we aim at generalizing the results for constant coefficients in \cite{jin} to the non-constant coefficient case. 

The next ingredient for the RBM is a \enquote{truth} solver which is able to determine the solution of the problem for a given value of the parameter up to any desired accuracy. Those detailed solutions (also called \enquote{snapshots}) are used in an offline training phase to derive a reduced model by maximizing an error estimator w.r.t.\ a finite training subset $\cP_{\text{train}}\subset\cP$ of parameters in a greedy manner. Hence, an efficiently computable and sharp error estimator is needed, which can be formed by the inverse of the coercivity constant multiplied by the dual norm of the residual. This is the reason why we derive a formula for the coercivity constant as this also makes the dependency of the fractional order $s$ explicit. 

The best possible error that can be achieved by the RBM of size $n\in\IN$ is given be the \emph{Kolmogorov $n$-width} $d_n(\cP)$. It is known that $d_n(\cP)$ decays exponentially fast for elliptic and (space-time variational) parabolic problems, but may show very poor decay for transport and wave-type phenomena, see e.g.\ \cite{AGU25,m.ohlbergers.rave2016}. Hence, we are interested how $d_n(\cP)$ behaves w.r.t.\ the fractional order $s$ as $s\to 2$ is expected to show elliptic and $s\to 1$ transport-type behavior. 

The remainder of this paper is organized as follows. In \Cref{sec:fpde} we collect some notation and known facts on the Riemann-Liouville fractional operators. We also derive a norm equivalence in \Cref{lem:eqNorm}, which is crucial for the subsequent analysis. \Cref{ssec:varfor} is devoted to the derivation and the analysis of a variational formulation for fractional differential equations with non-constant coefficients, as well as their solutions, extending the results in \cite{jin}. A Finite Element discretization is presented and analyzed in \Cref{sec:fem}. We describe the application of the RBM for parameterized diffusion and reaction coefficients in \Cref{sec:rb}. Whereas the application of the RBM turns out to be rather standard (since a coercive variational formulation has been derived in \Cref{ssec:varfor} before), the analysis of the decay of the Kolmogorov $n$-width w.r.t.\ the fractional order $s$ in \Cref{ssec:Kolnwith} is (to the very best of our knowledge) new. Numerical experiments both for the FEM and the RBM are presented in \Cref{sec:numexp}. We close with some conclusions and an outlook in \Cref{sec:conclusion}.

%-------------------------------------------------------------------------------------------------------
\section{Some facts on Fractional Differential Operators}
\label{sec:fpde}
%-------------------------------------------------------------------------------------------------------
\subsection{Notation}
%-------------------------------------------------------------------------------------------------------
% Introduce the classic Sobolev Spaces H^s \tilde{H}^s, only what you can find in books.
% General notation
Let $U$ be a Banach space with norm $\norm{U}{\cdot}$ and dual space $U'$. The space of bounded linear operators between two Banach spaces $U$ and $V$ is written as $\cL(U, V)$. For a Hilbert space $H$, we let $\pr{H}{\cdot}{\cdot}$ and $\dpr{H}{\cdot}{\cdot}$ denote its inner and duality products, respectively. Since we will be dealing mostly with spaces of real-valued functions, all inner and duality products are understood in the bilinear sense. 

For $m\in\IN$ and an open and bounded Lipschitz domain $\Omega\in\IR^d$, $d\in\{1, 2, 3\}$, the
space of real valued continuous functions on $\Omega$ with $m$ continuous derivatives in $\Omega$ is denoted by $\cont{m}{\Omega}$, $\cont{\infty}{\Omega}$ is the space of infinitely continuously differentiable functions in $\Omega$ and $\conto{\infty}{\Omega}$ is the space of functions in $\cont{\infty}{\Omega}$ with compact support in $\Omega$. Furthermore, for $p\geq 1$, the usual space of $p$-integrable measurable functions over $\Omega$ is denoted by $\lp{p}{\Omega}$ and, for $s\in\IR$, we use the Sobolev spaces (of broken order) $\hs{s}{\Omega}$, $\hso{s}{\Omega}$ and $\hst{s}{\Omega}$ as in \cite{mclean2000strongly} and recall the duality relationships 
\begin{align*}
	\hs{s}{\Omega}' = \hst{-s}{\Omega} 
	\quad\text{and}\quad 
	\hst{s}{\Omega}' = \hs{-s}{\Omega}, 
\end{align*}
as well as the equivalence
\begin{align*}
	\hst{s}{\Omega} = \hso{s}{\Omega},\quad
	\forall s\geq 0 \text{ such that }s\not\in\{\tfrac{1}{2},\tfrac{3}{2},\hdots\},
\end{align*}
with equivalent norms (\emph{c.f.}~\cite[Thm.\ 3.33]{mclean2000strongly}). The semi-norm of the Sobolev space of order $s> 0$ is written as $\snorm{\hs{s}{\Omega}}{\cdot}$. We will always identify the space $\lp{2}{\Omega}$ with its dual, so as to obtain Gelfand triples $\hst{-s}{\Omega}\hookrightarrow\lp{2}{\Omega}\hookrightarrow\hs{s}{\Omega}$ and $\hs{-s}{\Omega}\hookrightarrow\lp{2}{\Omega}\hookrightarrow\hst{s}{\Omega}$. Furthermore, we will be required to work with Sobolev spaces of functions over $\Omega$ that have smooth extensions by zero to the left and right sides of $\Omega$, defined as
\begin{align*}
  \hstl{s}{\Omega}&:=\{u\in\hs{s}{\Omega}:\, \exists\, \tilde{u}\in\hst{s}{0, \infty} \text{ s.t.\ } u = \tilde{u}\vert_{\Omega} \},\\
  \hstr{s}{\Omega}&:=\{u\in\hs{s}{\Omega}:\, \exists\, \tilde{u}\in\hst{s}{-\infty, 1} \text{ s.t.\ } u = \tilde{u}\vert_{\Omega} \}.
\end{align*}
A Hilbert space structure is recovered for $\hstl{s}{\Omega}$ by restricting the norm and product of $\hs{s}{-\infty, 1}$ to left-sided extension by zero of elements in $\hstl{s}{\Omega}$. Analogously, restricting the norm and product of $\hs{s}{0, \infty}$ to the right-sided extension by zero of elements in $\hstr{s}{\Omega}$ yields the corresponding Hilbert space structure for this space. These spaces were introduced in \cite[\S 2]{jin} and characterize the range and domain of fractional integral and differential operators to be defined later on. Furthermore, consider the spaces
\begin{align*}
  \contl{\infty}{\Omega} &:=\{u = \tilde{u}|_{\Omega}\text{ for some }\tilde{u}\in\conto{\infty}{0, \infty}\},\\
  \contr{\infty}{\Omega} &:=\{u = \tilde{u}|_{\Omega}\text{ for some }\tilde{u}\in\conto{\infty}{-\infty, 1}\}.
\end{align*}
By proceeding analogously as in \cite[Chap.\ 3.6]{mclean2000strongly}, one can see that these spaces are dense
in $\hstl{s}{\Omega}$ and $\hstr{s}{\Omega}$, respectively. Furthermore, we shall identify elements of
$\hst{s}{\Omega}$, $\hstl{s}{\Omega}$ and $\hstr{s}{\Omega}$ with their corresponding extensions by $0$
so as to not introduce additional notation.

Finally, $\cF$ will denote the Fourier transform, $\Gamma$ will correspond to the Gamma function, and $\ceil{s}$ and $\floor{s}$ will denote, for any $s\in\IR$, the smallest integer larger than $s$ and the largest integer smaller than $s$, respectively.

%-------------------------------------------------------------------------------------------------------
\subsection{Fractional Operators}
\subsubsection{Fractional Integral Operators}
% Definition and main properties of Riemann Liouville Diferential operator
We begin by introducing the Rie\-mann-Liouville fractional integral operators of order $s > 0$, which are needed for the construction of fractional differential operators (\emph{c.f.}~\cite[Chap.\ 1.2.3]{samko1993fractional} and \cite[\S 2.1]{kilbas2006theory}).
\begin{definition}[Fractional Integral Operators]\label{defi:RLInt}
	For any $s > 0$, $\varphi\in\lp{1}{0,\infty}$ and $\psi\in\lp{1}{-\infty,1}$, we introduce the \emph{left-} and \emph{right-sided Riemann-Liouville fractional integral operators} as
	\begin{align*}
		&\flint{s}{\varphi}(x):= \frac{1}{\Gamma(s)}\int_0^{x}(x - t)^{s - 1}\varphi(t)\d t\quad\forall x > 0, \\
		&\frint{s}{\psi}(x):= \frac{1}{\Gamma(s)}\int_x^{1}(t - x)^{s - 1}\psi(t)\d t \quad\forall x < 1.
	\end{align*}
\end{definition}

The following lemmas introduce useful properties of the integral operators, which have appeared in \cite{jin,kilbas2006theory,samko1993fractional}. We omit the proofs for brevity, but point out relevant references for each result. We also refer to \cite[La.\ 2.1 and 2.3]{kilbas2006theory} for more details.

\begin{lemma}[{\cite[Thm.\ 2.6]{samko1993fractional}}]\label{lem:IntSg}
	The fractional integral operators introduced in \Cref{defi:RLInt} are bounded in $\lp{p}{\Omega}$, $\Omega=(0,1)$,  for any $p\geq 1$, with continuity constant $1/\Gamma{(s+1)}$. Moreover, for any $s_1, s_2 > 0$ and $\varphi\in\lp{p}{\Omega}$ they satisfy $\flint{s_1+s_2}{\varphi} = \flint{s_2}{\flint{s_1}{\varphi}}$ and $\frint{s_1+s_2}{\varphi} = \frint{s_2}{\frint{s_1}{\varphi}}$.\hfill$\Box$
\end{lemma}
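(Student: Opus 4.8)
The statement has two independent parts, and the plan is to prove both for the left-sided operators in detail and to recover the right-sided versions by the reflection $x\mapsto 1-x$, which interchanges $\flint{s}{}$ and $\frint{s}{}$ while preserving $\lp{p}{\Omega}$-norms. The structural observation underlying everything is that, for $\varphi$ supported in $\Omega=(0,1)$, the left-sided fractional integral is a one-sided convolution $\flint{s}{\varphi}(x)=(k_s*\varphi)(x)$ with the locally integrable kernel $k_s(y):=\tfrac{1}{\Gamma(s)}y^{s-1}$ for $y>0$ and $k_s(y):=0$ for $y\le 0$.

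For the boundedness claim I would invoke Young's convolution inequality in the form $\norm{\lp{p}{}}{k*\varphi}\le\norm{\lp{1}{}}{k}\,\norm{\lp{p}{}}{\varphi}$. Since $k_s\notin\lp{1}{\IR}$, the plan is first to truncate: because $x,t\in(0,1)$ forces $x-t\in(-1,1)$ and $k_s$ vanishes on $(-\infty,0]$, only the restriction $\tilde k_s:=\chi_{(0,1)}\,k_s$ contributes to the convolution, and this truncated kernel satisfies $\norm{\lp{1}{\IR}}{\tilde k_s}=\tfrac{1}{\Gamma(s)}\int_0^1 y^{s-1}\,\d y=\tfrac{1}{\Gamma(s+1)}$. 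Extending $\varphi$ by zero to $\IR$ and applying Young's inequality then gives $\norm{\lp{p}{\Omega}}{\flint{s}{\varphi}}\le\norm{\lp{1}{\IR}}{\tilde k_s}\,\norm{\lp{p}{\Omega}}{\varphi}=\tfrac{1}{\Gamma(s+1)}\norm{\lp{p}{\Omega}}{\varphi}$, which is exactly the asserted continuity constant.

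For the semigroup identity the plan is a direct Fubini-and-Beta-function computation. Writing out $\flint{s_2}{\flint{s_1}{\varphi}}(x)$ as a double integral over $\{0<t<\tau<x\}$ and exchanging the order of integration yields
\begin{align*}
\flint{s_2}{\flint{s_1}{\varphi}}(x)
=\frac{1}{\Gamma(s_1)\Gamma(s_2)}\int_0^x\varphi(t)\left(\int_t^x (x-\tau)^{s_2-1}(\tau-t)^{s_1-1}\,\d\tau\right)\d t.
\end{align*}
The inner integral is evaluated through the substitution $\tau=t+(x-t)\sigma$, which turns it into $(x-t)^{s_1+s_2-1}B(s_1,s_2)$ with $B(s_1,s_2)=\Gamma(s_1)\Gamma(s_2)/\Gamma(s_1+s_2)$; the Gamma factors then cancel and leave precisely $\flint{s_1+s_2}{\varphi}(x)$.

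The only genuine subtlety, and thus the step I would handle most carefully, is the justification of the interchange of integrals. The plan is to apply Tonelli's theorem to $\abs{\varphi}$ first: the boundedness just proved (applied successively to $\flint{s_1}{\abs{\varphi}}$ and then $\flint{s_2}{}$) shows that the iterated integral of the nonnegative integrand is finite for a.e.\ $x$, so the double integral is absolutely convergent and Fubini's theorem applies to $\varphi$ itself. Everything else reduces to the elementary Beta-integral evaluation above, so I expect no further difficulty.
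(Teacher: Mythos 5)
Your proof is correct and follows essentially the same route as the source the paper defers to: the paper itself gives no argument, citing \cite[Thm.\ 2.6]{samko1993fractional}, where boundedness is likewise obtained from the $\lp{1}{}$-norm of the truncated convolution kernel $\chi_{(0,1)}(y)\,y^{s-1}/\Gamma(s)$ (via the generalized Minkowski inequality, equivalent to your use of Young's inequality) and the semigroup identity from the Fubini--Beta-function computation you carry out. Your handling of the interchange of integrals via Tonelli applied to $\abs{\varphi}$, and the reflection $x\mapsto 1-x$ for the right-sided operators, are both sound, so nothing is missing.
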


\begin{lemma}[{\cite[Thm.\ 3.1 and Rem.\ 3.1]{jin}}]\label{lem:IntCont}
	Let $s,\sigma \geq 0$. The fractional integral operators $\flint{s}{}$ and $\frint{s}{}$ are bounded from $\hstl{\sigma}{\Omega}$ to $\hstl{\sigma + s}{\Omega}$ and from $\hstr{\sigma}{\Omega}$ to $\hstr{\sigma + s}{\Omega}$, respectively.\hfill$\Box$
\end{lemma}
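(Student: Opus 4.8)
The plan is to establish the left-sided statement $\flint{s}{}:\hstl{\sigma}{\Omega}\to\hstl{\sigma+s}{\Omega}$ and then recover the right-sided one by the reflection $x\mapsto 1-x$, which interchanges $\flint{s}{}$ with $\frint{s}{}$ and $\hstl{\cdot}{\Omega}$ with $\hstr{\cdot}{\Omega}$ while preserving Sobolev norms. Since $\contl{\infty}{\Omega}$ is dense in $\hstl{\sigma}{\Omega}$, it suffices to prove the a priori bound $\norm{\hstl{\sigma+s}{\Omega}}{\flint{s}{\varphi}}\lesssim\norm{\hstl{\sigma}{\Omega}}{\varphi}$ for $\varphi\in\contl{\infty}{\Omega}$ and then pass to the limit. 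Throughout I would identify $\varphi$ and $\flint{s}{\varphi}$ with their extensions by zero to the left, so that the relevant quantities are the $\hs{\sigma}{-\infty,1}$- and $\hs{\sigma+s}{-\infty,1}$-norms defining these spaces.

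The core observation is that, after extension by zero, $\flint{s}{\varphi}=\kappa_s\ast\varphi$ is a convolution on $\IR$ with the causal kernel $\kappa_s(x)=x_+^{s-1}/\Gamma(s)$, supported in $[0,\infty)$; hence $\cF(\kappa_s)$ formally behaves like $|\omega|^{-s}$ for large $|\omega|$. A direct Fourier-multiplier estimate $(1+|\omega|^2)^{(\sigma+s)/2}\,|\cF(\kappa_s)|\lesssim(1+|\omega|^2)^{\sigma/2}$ would give the claim, and indeed holds at high frequencies, but it fails near $\omega=0$, where $|\omega|^{-s}$ is singular. This low-frequency divergence is the main obstacle: it reflects the fact that $\kappa_s\ast\varphi$ is generally \emph{not} in $\hs{\sigma+s}{\IR}$ and is controlled only on the bounded window where the $\widetilde{H}_L$-norm actually lives.

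I would resolve this by splitting the kernel as $\kappa_s=\eta\kappa_s+(1-\eta)\kappa_s=:\kappa_s^{\mathrm{loc}}+\kappa_s^{\mathrm{tail}}$ with a cutoff $\eta\in\conto{\infty}{-1,1}$ equal to $1$ near the origin. The local part $\kappa_s^{\mathrm{loc}}$ carries the singularity at $0$ but has compact support, so $\cF(\kappa_s^{\mathrm{loc}})$ is smooth and satisfies $|\cF(\kappa_s^{\mathrm{loc}})(\omega)|\lesssim(1+|\omega|)^{-s}$ uniformly; in particular it is bounded at $\omega=0$. For this part the Fourier-multiplier estimate goes through and yields $\norm{\hs{\sigma+s}{\IR}}{\kappa_s^{\mathrm{loc}}\ast\varphi}\lesssim\norm{\hs{\sigma}{\IR}}{\varphi}$, and restricting to $(-\infty,1)$ only decreases the norm. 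The tail part $\kappa_s^{\mathrm{tail}}$ is $\cont{\infty}{\IR}$ and supported in $[c,\infty)$ for some $c>0$; for $x<1$ only values $t<1$ of $\varphi$ contribute to $(\kappa_s^{\mathrm{tail}}\ast\varphi)(x)$, so on $(-\infty,1)$ this term is a smoothing operator with a fixed smooth kernel acting on $\varphi|_{\Omega}$, whose $\hs{\sigma+s}{-\infty,1}$-norm is bounded by $\norm{\lp{2}{\Omega}}{\varphi}\le\norm{\hstl{\sigma}{\Omega}}{\varphi}$ via differentiation under the integral sign and Cauchy--Schwarz. Summing the two contributions gives the estimate.

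As an alternative that sidesteps some of the low-frequency bookkeeping, one may first prove the base case $\sigma=0$ as above and then bootstrap in integer steps of $\sigma$ using the semigroup property of \Cref{lem:IntSg} together with the commutation of $\flint{s}{}$ with differentiation (valid on $\hstl{\cdot}{\Omega}$ precisely because the left extension by zero makes the boundary contributions at $x=0$ vanish), interpolating for non-integer $\sigma$. Either way, the decisive point is the behavior of the symbol $|\omega|^{-s}$ at the origin, which is tamed by exploiting compact support and the fact that the target norm is taken on $(-\infty,1)$ rather than on all of $\IR$.
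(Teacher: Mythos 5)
Your proposal is correct in substance, but note that the paper itself contains no proof of this lemma: it is imported verbatim from \cite[Thm.\ 3.1 and Rem.\ 3.1]{jin}, and the surrounding text states explicitly that proofs in this subsection are omitted. So the comparison is with the cited proof, which is likewise a Fourier-analytic argument on zero extensions; your local/tail kernel splitting, with a Paley--Wiener-type bound $\abs{\cF(\kappa_s^{\mathrm{loc}})(\omega)}\lesssim(1+\abs{\omega})^{-s}$ for the compactly supported singular part (true, via repeated integration by parts on the smooth tail $\kappa_s^{\mathrm{tail}}$) and a smoothing bound for the tail, together with density of $\contl{\infty}{\Omega}$ and the reflection $x\mapsto 1-x$ for $\frint{s}{}$, is a standard and workable route to the same statement.

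One bookkeeping step needs repair, though it is fixable by the causality you already invoke for the tail. You run the multiplier estimate in the form $\norm{\hs{\sigma+s}{\IR}}{\kappa_s^{\mathrm{loc}}\ast\varphi}\lesssim\norm{\hs{\sigma}{\IR}}{\varphi}$ after ``identifying $\varphi$ with its extension by zero.'' But the identification licensed by the definition of $\hstl{\sigma}{\Omega}$ is one-sided: the norm controls the $\hs{\sigma}{-\infty,1}$-quotient-norm of the \emph{left} zero extension, while the further extension by zero across $x=1$ generally fails to lie in $\hs{\sigma}{\IR}$ once $\sigma\ge\tfrac12$, since elements of $\hstl{\sigma}{\Omega}$ need not vanish at $x=1$. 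The fix: pick $\Phi\in\hs{\sigma}{\IR}$ agreeing with the left zero extension of $\varphi$ on $(-\infty,1)$ with $\norm{\hs{\sigma}{\IR}}{\Phi}\lesssim\norm{\hstl{\sigma}{\Omega}}{\varphi}$ and, after multiplying by a cutoff, supported in $[0,2]$; since $\kappa_s$ is supported in $[0,\infty)$, the convolutions $\kappa_s^{\mathrm{loc}}\ast\Phi$ and $\kappa_s^{\mathrm{tail}}\ast\Phi$ on $(-\infty,1)$ see only $\Phi|_{(-\infty,1)}$, so their sum equals $\flint{s}{\varphi}$ there and your two estimates apply verbatim to $\Phi$. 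This also supplies a point your write-up leaves implicit: membership in $\hstl{\sigma+s}{\Omega}$ requires an extension in $\hst{\sigma+s}{0,\infty}$, i.e.\ one supported in $[0,\infty)$, not merely finiteness of the $\hs{\sigma+s}{-\infty,1}$-norm; with $\Phi$ supported in $[0,2]$ both pieces of your splitting (and the cutoff tail) are supported in $[0,\infty)$, so the constructed extension qualifies. With these adjustments your first argument is complete; the bootstrap alternative via \Cref{lem:IntSg}, the commutation $(\flint{s}{\varphi})'=\flint{s}{\varphi'}$ on $\hstl{\sigma+1}{\Omega}$ (where the trace at $0$ vanishes), and interpolation is also viable, modulo the usual care at half-integer orders where the interpolation scale of the $\widetilde{H}$-spaces has exceptional values.
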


\begin{lemma}[{\cite[Cor.\ to Thm.\ 3.5]{samko1993fractional}}]\label{lem:intByInt}
  Let $s\geq 0$. For any $\psi, \phi\in\lp{2}{\Omega}$ the following relation holds $\pr{\lp{2}{\Omega}}{\flint{s}{\psi}}{\phi} = \pr{\lp{2}{\Omega}}{\psi}{\frint{s}{\phi}}$.\hfill$\Box$
\end{lemma}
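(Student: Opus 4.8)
The plan is to establish the identity by writing out both inner products as double integrals over the triangular region $\{(t,x):0<t<x<1\}$ and interchanging the order of integration via Fubini's theorem. First I would dispose of the degenerate case $s=0$, where by convention $\flint{0}{}$ and $\frint{0}{}$ are the identity operator, so that the claim reduces to the trivial equality $\pr{\lp{2}{\Omega}}{\psi}{\phi}=\pr{\lp{2}{\Omega}}{\psi}{\phi}$. For $s>0$, I would substitute the definitions from \Cref{defi:RLInt} into the left-hand side to obtain
\begin{align*}
  \pr{\lp{2}{\Omega}}{\flint{s}{\psi}}{\phi}
  = \frac{1}{\Gamma(s)}\int_0^1\!\!\int_0^x (x-t)^{s-1}\,\psi(t)\,\phi(x)\,\d t\,\d x,
\end{align*}
where the integration indeed runs over $0<t<x<1$. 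Interchanging the roles of $t$ and $x$, the inner integral becomes $\int_t^1 (x-t)^{s-1}\phi(x)\,\d x = \Gamma(s)\,\frint{s}{\phi}(t)$, and after multiplying by $1/\Gamma(s)$ one is left with $\int_0^1 \psi(t)\,\frint{s}{\phi}(t)\,\d t = \pr{\lp{2}{\Omega}}{\psi}{\frint{s}{\phi}}$, which completes the argument.

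The only nontrivial point is justifying the interchange, which requires absolute integrability of the integrand over the triangle. This is delicate only in the range $0<s<1$, since there the kernel $(x-t)^{s-1}$ is singular along the diagonal $t=x$; for $s\geq 1$ it is bounded and integrability is immediate. To treat all cases uniformly, I would observe that
\begin{align*}
  \frac{1}{\Gamma(s)}\int_0^1\!\!\int_0^x (x-t)^{s-1}\,\abs{\psi(t)}\,\abs{\phi(x)}\,\d t\,\d x
  = \pr{\lp{2}{\Omega}}{\flint{s}{\abs{\psi}}}{\abs{\phi}},
\end{align*}
and then invoke \Cref{lem:IntSg}, which guarantees $\flint{s}{\abs{\psi}}\in\lp{2}{\Omega}$ with $\norm{\lp{2}{\Omega}}{\flint{s}{\abs{\psi}}}\leq \tfrac{1}{\Gamma(s+1)}\norm{\lp{2}{\Omega}}{\psi}$. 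A single application of the Cauchy--Schwarz inequality then bounds the right-hand side by $\tfrac{1}{\Gamma(s+1)}\norm{\lp{2}{\Omega}}{\psi}\,\norm{\lp{2}{\Omega}}{\phi}<\infty$, so the hypotheses of Fubini's theorem are met and the formal interchange above becomes rigorous.

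I expect the main obstacle to be precisely this integrability verification in the singular regime $0<s<1$. The mapping property of \Cref{lem:IntSg} makes it painless; without it one would instead have to estimate the iterated kernel integral $\int_0^x (x-t)^{s-1}\,\d t = x^s/s$ directly and combine it with Cauchy--Schwarz by hand, which is more cumbersome but leads to the same conclusion. Once absolute integrability is secured, the remainder is a routine change in the order of integration carrying no further analytic subtleties.
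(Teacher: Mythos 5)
Your proof is correct. The paper itself offers no argument for this lemma: it is stated with a citation to \cite[Cor.\ to Thm.\ 3.5]{samko1993fractional} and the proof is explicitly omitted (\enquote{We omit the proofs for brevity}). What you have written is, in essence, the classical proof behind that citation -- the fractional integration-by-parts formula in Samko--Kilbas--Marichev is established by exactly this Fubini argument over the triangle $\{0<t<x<1\}$. Your handling of the one delicate point is clean and non-circular: the identity $\tfrac{1}{\Gamma(s)}\int_0^1\int_0^x (x-t)^{s-1}\abs{\psi(t)}\abs{\phi(x)}\,\d t\,\d x = \pr{\lp{2}{\Omega}}{\flint{s}{\abs{\psi}}}{\abs{\phi}}$ is just the definition of $\flint{s}{}$ applied to $\abs{\psi}$ (no interchange needed there), and combining \Cref{lem:IntSg} with Cauchy--Schwarz yields the bound $\Gamma(s+1)^{-1}\norm{\lp{2}{\Omega}}{\psi}\norm{\lp{2}{\Omega}}{\phi}<\infty$, so Tonelli legitimizes the swap even in the singular regime $0<s<1$. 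The $s=0$ case by the identity-operator convention is also the right way to cover the lemma's stated range $s\geq 0$, since \Cref{defi:RLInt} and \Cref{lem:IntSg} only address $s>0$. Compared with the paper's approach, yours buys self-containedness at the cost of a few lines; it proves the result in the exact generality the paper needs ($\psi,\phi\in\lp{2}{\Omega}$), whereas the cited corollary is stated for more general $L_p$--$L_q$ pairings.
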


%---------------------------------------------------------------------
\subsubsection{Fractional Differential Operators}
We are now ready to introduce the fractional Riemann-Liouville derivatives and their properties.

\begin{definition}[Fractional Derivatives]\label{defi:RLDif}
	For any non-integer $s > 0$, $\varphi\in\contl{\infty}{\Omega}$ and $\psi\in\contr{\infty}{\IR}$, we introduce the \emph{left-sided} $\flder{s}{}$ and \emph{right-sided} $\frder{s}{}$ \emph{Rie\-mann-Liouville fractional derivative} for all $x\in \Omega$ as
	\begin{align*}
		\flder{s}{\varphi}(x) &:= \frac{\d^{\ceil{s}}}{\d x^{\ceil{s}}}\left(\flint{\ceil{s} - s}{\varphi}(x)\right)
		\quad\text{and}\quad
		\frder{s}{\psi}(x):= -\frac{\d^{\ceil{s}}}{\d x^{\ceil{s}}}\left(\frint{\ceil{s} - s}{\psi}(x)\right).
	\end{align*}
\end{definition}
Identifying $\varphi\in\hst{s}{\Omega}$ with its zero extension to $\IR$ allows us to extend the fractional derivatives to the outside of $\Omega$ as well.

\begin{lemma}[{\cite[Thm.\ 2.2]{jin}}]\label{lem:derCont}
	For any $s > 0$ the fractional differential operators $\flder{s}{}$ and $\frder{s}{}$ have bounded extensions from $\hstl{s}{\Omega}$ and $\hstr{s}{\Omega}$ to $\lp{2}{\Omega}$, respectively. Moreover, $\norm{\lp{2}{\Omega}}{\flder{s}{\varphi}}\leq\norm{\hstl{s}{\Omega}}{\varphi}$ for all $\varphi\in\hstl{s}{\Omega}$ and $\norm{\lp{2}{\Omega}}{\frder{s}{\psi}}\leq\norm{\hstr{s}{\Omega}}{\psi}$ for all $\psi\in\hstr{s}{\Omega}$.\hfill$\Box$
\end{lemma}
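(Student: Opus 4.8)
The plan is to reduce the estimate to a whole-line bound via the Fourier transform, prove it on the dense subspace $\contl{\infty}{\Omega}$, and then extend by continuity. I focus on $\flder{s}{}$; the claim for $\frder{s}{}$ follows verbatim after the reflection $x\mapsto 1-x$, which interchanges the two families of operators and the spaces $\hstl{s}{\Omega}$, $\hstr{s}{\Omega}$.

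First I would establish the whole-line symbol calculus. Writing $\beta:=\ceil{s}-s\in(0,1)$ (using that $s$ is non-integer), the operator factors as $\flder{s}{}=\tfrac{\d^{\ceil{s}}}{\d x^{\ceil{s}}}\circ\flint{\beta}{}$, and on $\IR$ the left-sided integral $\flint{\beta}{}$ acts on a function supported in $[0,\infty)$ as convolution with the causal kernel $t_+^{\beta-1}/\Gamma(\beta)$. Its Fourier transform, read as a tempered distribution with the principal branch, is $(\iu\omega)^{-\beta}$, while $\tfrac{\d^{\ceil{s}}}{\d x^{\ceil{s}}}$ has symbol $(\iu\omega)^{\ceil{s}}$. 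Composing gives, for smooth compactly supported $w$ on $\IR$,
\begin{equation*}
  \widehat{\flder{s}{w}}(\omega)=(\iu\omega)^{\ceil{s}}(\iu\omega)^{-\beta}\hat{w}(\omega)=(\iu\omega)^{s}\hat{w}(\omega),\qquad \abs{(\iu\omega)^{s}}=\abs{\omega}^{s}.
\end{equation*}
Since $\abs{\omega}^{2s}\le(1+\omega^2)^{s}$ pointwise, Plancherel's theorem yields $\norm{\lp{2}{\IR}}{\flder{s}{w}}\le\norm{\hs{s}{\IR}}{w}$; by density this extends to all $w\in\hs{s}{\IR}$, so $\flder{s}{}$ maps $\hs{s}{\IR}$ into $\lp{2}{\IR}$ with operator norm at most $1$.

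Next I would transfer this to $\Omega$. Let $\varphi\in\contl{\infty}{\Omega}$ and let $w\in\hs{s}{\IR}$ be any extension of the left zero-extension of $\varphi$, so that $w=\varphi$ on $(0,1)$ and $w=0$ on $(-\infty,0)$. Because the left-sided derivative is causal, i.e.\ $\flder{s}{w}(x)$ depends only on $w|_{(-\infty,x)}$, we have $\flder{s}{w}=\flder{s}{\varphi}$ on $\Omega$, whence
\begin{equation*}
  \norm{\lp{2}{\Omega}}{\flder{s}{\varphi}}=\norm{\lp{2}{\Omega}}{\flder{s}{w}}\le\norm{\lp{2}{\IR}}{\flder{s}{w}}\le\norm{\hs{s}{\IR}}{w}.
\end{equation*}
Taking the infimum over all such extensions $w$, and recalling that the $\hstl{s}{\Omega}$-norm is exactly the restriction (quotient) norm inherited from $\hs{s}{-\infty,1}$, that is, the infimum of $\norm{\hs{s}{\IR}}{w}$ over extensions, gives $\norm{\lp{2}{\Omega}}{\flder{s}{\varphi}}\le\norm{\hstl{s}{\Omega}}{\varphi}$. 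Finally, since $\contl{\infty}{\Omega}$ is dense in $\hstl{s}{\Omega}$ and the estimate is uniform with constant $1$, $\flder{s}{}$ admits a unique bounded extension $\hstl{s}{\Omega}\to\lp{2}{\Omega}$ satisfying the same bound.

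I expect the main obstacle to be the rigorous symbol computation for $\flint{\beta}{}$: the kernel $t_+^{\beta-1}/\Gamma(\beta)$ is not integrable at infinity, so the identity $\widehat{\flint{\beta}{w}}=(\iu\omega)^{-\beta}\hat{w}$ must be justified in the tempered-distribution sense with a carefully fixed branch of $(\iu\omega)^{-\beta}$, whose modulus $\abs{\omega}^{-\beta}$ is what ultimately produces the clean constant. A secondary point requiring care is the sharp constant $1$ itself: it hinges on using the quotient-norm description of $\hstl{s}{\Omega}$ and taking the infimum over extensions rather than a single fixed one, since a function in $\contl{\infty}{\Omega}$ need not vanish at $x=1$ and hence cannot simply be zero-extended across $x=1$ while remaining in $\hs{s}{\IR}$.
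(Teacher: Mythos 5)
Your proposal is correct and takes essentially the same route as the source this paper relies on: the paper states \Cref{lem:derCont} without proof, citing \cite[Thm.\ 2.2]{jin}, and that proof is precisely your argument --- the whole-line Fourier symbol $(\iu\omega)^s$ with $\abs{(\iu\omega)^s}=\abs{\omega}^s$ and Plancherel giving $\norm{\lp{2}{\IR}}{\flder{s}{w}}\leq\norm{\hs{s}{\IR}}{w}$, causality of the convolution kernel to localize to $\Omega$, the quotient-norm description of $\norm{\hstl{s}{\Omega}}{\cdot}$ via the infimum over $H^s(\IR)$-extensions of the left zero-extension, and density of $\contl{\infty}{\Omega}$. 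The same multiplier-plus-Plancherel computation also appears verbatim in this paper's proof of \Cref{Cor:NormEq} via \eqref{eq:Lemma2.7}, so your two flagged technical points (the tempered-distribution sense of the symbol identity, and taking the infimum over extensions rather than a fixed one to get the constant $1$) are exactly the right ones.
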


\begin{lemma}[{\cite[Prop.\ A.4]{ervin2006variational} or \cite[Thm.\ 2.4]{samko1993fractional}}]\label{lem:derInv}
	  The left and right-sided fractional derivatives of order $s>0$ act as left inverses of the left- and right-sided fractional integral operators of order $s$, i.e., $\flder{s}{\flint{s}{\varphi}}(x) = \varphi(x)$ and $\frder{s}{\frint{s}{\varphi}}(x) = \varphi(x)$,  whenever $\varphi(x)$ is a summable function.\hfill$\Box$
\end{lemma}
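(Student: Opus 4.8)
The plan is to read this as the fractional analogue of the fundamental theorem of calculus: the two fractional orders should collapse into a single \emph{integer} order via the semigroup law, after which the classical statement does all the work. I would prove the left-sided identity $\flder{s}{\flint{s}{\varphi}}(x) = \varphi(x)$ in detail and treat the right-sided one as its mirror image. Throughout I identify a summable $\varphi$ on $\Omega$ with its extension by zero, so that the operators of \Cref{defi:RLInt} and \Cref{defi:RLDif} are defined.

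First I would unfold the definition of the fractional derivative from \Cref{defi:RLDif}, writing
\[
  \flder{s}{\flint{s}{\varphi}}(x)
  = \frac{\d^{\ceil{s}}}{\d x^{\ceil{s}}}\Bigl(\flint{\ceil{s}-s}{\bigl(\flint{s}{\varphi}\bigr)}(x)\Bigr).
\]
Since $\varphi\in\lp{1}{\Omega}$, the semigroup property of \Cref{lem:IntSg} (which is valid for $p=1$) applies with $s_1=s$ and $s_2=\ceil{s}-s$, giving $\flint{\ceil{s}-s}{\flint{s}{\varphi}}=\flint{\ceil{s}}{\varphi}$, an integral of \emph{integer} order $n:=\ceil{s}$. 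The next step is to identify this with the ordinary $n$-fold iterated integral of $\varphi$: specializing \Cref{defi:RLInt} to the integer $n$ yields $\flint{n}{\varphi}(x)=\frac{1}{(n-1)!}\int_0^x (x-t)^{n-1}\varphi(t)\,\d t$, which is exactly Cauchy's formula for repeated integration. Consequently $\flint{n}{\varphi}$ has $n-1$ continuous derivatives and an absolutely continuous $(n-1)$-st derivative, and iterating the classical fundamental theorem of calculus gives $\frac{\d^{n}}{\d x^{n}}\flint{n}{\varphi}(x)=\varphi(x)$ for almost every $x$. Combining this with the display above proves the left-sided identity a.e.

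For the right-sided identity the same two steps apply verbatim to $\frint{\ceil{s}-s}{\frint{s}{\psi}}=\frint{\ceil{s}}{\psi}$, again recognizing the integer-order operator as the $n$-fold iterated integral $\int_x^1\cdots$; the only additional bookkeeping concerns the orientation of the kernel, since each of the $\ceil{s}$ differentiations of $\int_x^1(\cdot)$ contributes a factor $-1$, and the sign convention carried in the definition of $\frder{s}{}$ is precisely what is needed to leave $\psi$. I expect the single genuinely delicate point to be the differentiability claim in the middle step: for merely summable $\varphi$ one cannot manipulate $\frac{\d^{n}}{\d x^{n}}$ formally, and must instead invoke the Cauchy-formula representation together with the fact that $\flint{}{}$ maps $\lp{1}{\Omega}$ into $\lp{1}{\Omega}$ (\Cref{lem:IntSg}) to guarantee that $\flint{n}{\varphi}$ is $n$-times differentiable almost everywhere with the asserted derivative. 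This is exactly why an abstract semigroup argument alone is not enough and the explicit kernel must be used.
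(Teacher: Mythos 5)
Your argument is correct and is essentially the proof contained in the references the paper cites for this lemma (the paper itself gives no proof, deferring to \cite[Thm.\ 2.4]{samko1993fractional} and \cite[Prop.\ A.4]{ervin2006variational}): collapse $\flder{s}{\flint{s}{\varphi}}$ via the $L^1$ semigroup law of \Cref{lem:IntSg} to $\frac{\d^{\ceil{s}}}{\d x^{\ceil{s}}}\flint{\ceil{s}}{\varphi}$, recognize Cauchy's repeated-integration formula, and conclude by Lebesgue differentiation that the identity holds almost everywhere, which is the precise sense of the statement for merely summable $\varphi$. One caveat on your right-sided bookkeeping: the $\ceil{s}$ differentiations of $\frint{\ceil{s}}{\psi}$ produce the factor $(-1)^{\ceil{s}}$, so the cancellation you invoke requires the standard convention $\frder{s}{\psi}=(-1)^{\ceil{s}}\frac{\d^{\ceil{s}}}{\d x^{\ceil{s}}}\bigl(\frint{\ceil{s}-s}{\psi}\bigr)$; the single minus sign written in \Cref{defi:RLDif} covers only $\ceil{s}=1$ (the orders $s/2\in(0,1)$ actually used in the paper's bilinear form), and taken literally for even $\ceil{s}$ it would yield $-\psi$ rather than $\psi$, so your claim that the definition's sign is \emph{precisely} what is needed holds for the standard convention, not the paper's written formula.
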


\begin{lemma}[{\cite[La.\ 4.1]{jin}}]\label{lem:exchange}
	Let $s\in (0,1)$. Then, for all $x\in\Omega$ it holds  $\flder{s}{\varphi}(x) = \flint{1-s} \varphi'(x)$ for all $\varphi\in \contl{\infty}{\Omega}$ and $\frder{s}{\psi}(x) = -\frint{1-s} \psi'(x)$ for $\psi\in \contr{\infty}{\Omega}$.  Furthermore, these relationships can be extended to hold for $\varphi\in\hstl{1}{\Omega}$ and $\psi\in\hstr{1}{\Omega}$, respectively.\hfill$\Box$
\end{lemma}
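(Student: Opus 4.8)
The plan is to exploit that for $s\in(0,1)$ one has $\ceil{s}=1$, so that \Cref{defi:RLDif} reduces the fractional derivative to a single classical derivative of a fractional integral, namely $\flder{s}{\varphi} = \frac{\d}{\d x}\flint{1-s}{\varphi}$ and $\frder{s}{\psi} = -\frac{\d}{\d x}\frint{1-s}{\psi}$. The claimed identities are therefore equivalent to interchanging the classical derivative with the fractional integral, i.e.\ to showing $\frac{\d}{\d x}\flint{1-s}{\varphi} = \flint{1-s}{\varphi'}$ and $\frac{\d}{\d x}\frint{1-s}{\psi} = \frint{1-s}{\psi'}$, the extra sign in \Cref{defi:RLDif} then accounting for the minus sign in the right-sided claim. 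The difficulty in differentiating directly is the singular kernel $(x-t)^{-s}$, whose singularity sits at the \emph{moving} endpoint $t=x$; the main idea is to relocate it to a fixed endpoint by a change of variables before differentiating.

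For the left-sided identity I would substitute $\tau = x-t$ to write $\flint{1-s}{\varphi}(x) = \frac{1}{\Gamma(1-s)}\int_0^x \tau^{-s}\varphi(x-\tau)\,\d\tau$, in which the singular factor $\tau^{-s}$ is now $x$-independent and integrable (as $s<1$), while the $x$-dependence appears only through the smooth integrand $\varphi(x-\tau)$ and the upper limit. Differentiating by the Leibniz rule yields the interior term $\frac{1}{\Gamma(1-s)}\int_0^x \tau^{-s}\varphi'(x-\tau)\,\d\tau$ together with a boundary contribution $\frac{1}{\Gamma(1-s)}\,x^{-s}\varphi(0)$. Since $\varphi\in\contl{\infty}{\Omega}$ is the restriction of a function with compact support in $(0,\infty)$, it vanishes near $0$, so $\varphi(0)=0$ and the boundary term drops out; reversing the substitution gives exactly $\flint{1-s}{\varphi'}(x)$. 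The right-sided statement follows by the mirror-image substitution $\tau=t-x$, where the analogous boundary term carries the factor $\psi(1)=0$, using the support property of $\contr{\infty}{\Omega}$.

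To pass to $\varphi\in\hstl{1}{\Omega}$ and $\psi\in\hstr{1}{\Omega}$, I would argue by density. Both sides of each identity are bounded linear maps into $\lp{2}{\Omega}$: the left-hand side $\flder{s}{\cdot}$ is bounded from $\hstl{s}{\Omega}$ to $\lp{2}{\Omega}$ by \Cref{lem:derCont}, and since $s<1$ the continuous embedding $\hstl{1}{\Omega}\hookrightarrow\hstl{s}{\Omega}$ makes it bounded on $\hstl{1}{\Omega}$; the right-hand side factors as $\varphi\mapsto\varphi'\in\lp{2}{\Omega}$ followed by $\flint{1-s}{\cdot}$, which is bounded on $\lp{2}{\Omega}$ by \Cref{lem:IntSg}. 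As the two bounded maps agree on the dense subspace $\contl{\infty}{\Omega}\subset\hstl{1}{\Omega}$ (density was recorded after the definition of these spaces), they coincide on all of $\hstl{1}{\Omega}$, and likewise for the right-sided case.

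The step I expect to require the most care is the rigorous justification of the Leibniz differentiation, where one must dominate the integrand and its $x$-derivative uniformly in $x$ near the singular endpoint $\tau=0$. This is precisely what the change of variables secures: the singularity $\tau^{-s}$ becomes independent of $x$ and remains integrable, while near the moving endpoint $\tau=x$ the smooth factor $\varphi'(x-\tau)$ already vanishes by the support of $\varphi$, so no singular boundary contribution survives and the interchange is legitimate.
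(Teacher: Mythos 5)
Your proposal is correct. Note that the paper does not actually prove this lemma: it is stated with a citation to \cite[La.\ 4.1]{jin} and the proofs of this block of preparatory results are explicitly omitted, so there is no in-paper argument to diverge from. Your write-up is a sound self-contained replacement for that citation, and it follows the standard route: for $s\in(0,1)$ one has $\ceil{s}=1$, the substitution $\tau=x-t$ freezes the singular kernel at a fixed endpoint, Leibniz differentiation produces the interior term $\flint{1-s}{\varphi'}$ plus the boundary term $\tfrac{1}{\Gamma(1-s)}x^{-s}\varphi(0)$, and the support properties of $\contl{\infty}{\Omega}$ (respectively $\contr{\infty}{\Omega}$, killing $\psi(1)$ in the mirrored computation with the correct sign) remove it; your domination argument for the differentiation under the integral is also legitimate, since $\tau^{-s}$ is integrable and $x$-independent after the substitution. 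A minor alternative worth knowing, closer to what \cite{jin} and \cite{ervin2006variational} do, is to integrate by parts first, writing $\flint{1-s}{\varphi}(x)=\tfrac{1}{\Gamma(2-s)}\bigl(x^{1-s}\varphi(0)+\int_0^x(x-t)^{1-s}\varphi'(t)\,\d t\bigr)$, which raises the kernel exponent so that the subsequent differentiation needs no singular-integral care; both routes are equivalent in substance. Your density step is also correctly assembled: $\flder{s}{}$ is bounded $\hstl{s}{\Omega}\to\lp{2}{\Omega}$ by \Cref{lem:derCont}, the embedding $\hstl{1}{\Omega}\hookrightarrow\hstl{s}{\Omega}$ is continuous with the norms as defined via extension by zero, $\varphi\mapsto\flint{1-s}{\varphi'}$ is bounded by \Cref{lem:IntSg}, and density of $\contl{\infty}{\Omega}$ in $\hstl{1}{\Omega}$ is recorded in the paper. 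The only caveat, which you could state explicitly, is that after this limit passage the identity holds in $\lp{2}{\Omega}$, i.e.\ for almost every $x$, while the pointwise ``for all $x\in\Omega$'' formulation pertains only to the smooth case.
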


The next statement is a generalization of integration by parts and is fundamental for the subsequent derivation of a variational formulation.
\begin{proposition}[{\cite[\S 4.1]{jin}}]\label{prop:intByParts}
	Let $\varphi\in\contl{\infty}{\Omega}$, $\psi\in\contr{\infty}{\Omega}$ and $s\in (0,1)$. Then,   
	$\pr{\lp{2}{\Omega}}{\flder{s}{\varphi}(x)}{\psi(x)} =  \pr{\lp{2}{\Omega}}{\varphi(x)}{\frder{s}{\psi}(x)}$.
\end{proposition}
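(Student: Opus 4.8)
The plan is to reduce this fractional integration-by-parts identity to an ordinary one by trading the two Riemann--Liouville derivatives for fractional integrals of the classical first derivatives. Since $s\in(0,1)$, \Cref{lem:exchange} applies directly and yields $\flder{s}{\varphi}=\flint{1-s}{\varphi'}$ together with $\frder{s}{\psi}=-\frint{1-s}{\psi'}$, both understood as genuine $\lp{2}{\Omega}$-functions because $\varphi,\psi$ are smooth. This already converts the derivatives into the better-behaved fractional integral operators of order $1-s>0$, for which the adjoint relation of \Cref{lem:intByInt} is available.

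Starting from the left-hand side, I would write $\pr{\lp{2}{\Omega}}{\flder{s}{\varphi}}{\psi}=\pr{\lp{2}{\Omega}}{\flint{1-s}{\varphi'}}{\psi}$ and apply \Cref{lem:intByInt} with order $1-s\ge 0$ (noting $\varphi',\psi\in\lp{2}{\Omega}$) to move the left integral onto the right one, obtaining $\pr{\lp{2}{\Omega}}{\varphi'}{\frint{1-s}{\psi}}$. It then remains to shift the classical derivative off $\varphi$ by an ordinary integration by parts on $\Omega=(0,1)$. The crucial observation is that, by \Cref{defi:RLDif} with $\ceil{s}=1$, one has $\tfrac{\d}{\d x}\frint{1-s}{\psi}=-\frder{s}{\psi}$, so the interior term produced by the integration by parts is precisely $\pr{\lp{2}{\Omega}}{\varphi}{\frder{s}{\psi}}$, which is the desired right-hand side.

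The only point requiring care --- and the main obstacle --- is the vanishing of the boundary term $[\varphi(x)\,\frint{1-s}{\psi}(x)]_0^1$ arising from the integration by parts. At $x=0$ it vanishes because $\varphi\in\contl{\infty}{\Omega}$ is the restriction of a function with compact support in $(0,\infty)$, so that $\varphi(0)=0$; at $x=1$ it vanishes because the defining integral collapses, $\frint{1-s}{\psi}(1)=\tfrac{1}{\Gamma(1-s)}\int_1^1(t-1)^{-s}\psi(t)\,\d t=0$ (equivalently, $\psi\in\contr{\infty}{\Omega}$ vanishes near $x=1$). Once both boundary contributions are seen to vanish, chaining the three equalities gives $\pr{\lp{2}{\Omega}}{\flder{s}{\varphi}}{\psi}=\pr{\lp{2}{\Omega}}{\varphi}{\frder{s}{\psi}}$. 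The smoothness and support properties of $\varphi$ and $\psi$ ensure that every integrand above is integrable and that the endpoint evaluations are meaningful, so no further regularity bookkeeping is needed.
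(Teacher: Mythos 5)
Your proof is correct: the paper itself states this proposition without proof, citing \cite[\S 4.1]{jin}, and your argument --- trading $\flder{s}{\varphi}$ for $\flint{1-s}{\varphi'}$ via \Cref{lem:exchange}, passing the fractional integral across the inner product with \Cref{lem:intByInt}, and finishing with a classical integration by parts in which $-\tfrac{\d}{\d x}\frint{1-s}{\psi}=\frder{s}{\psi}$ by \Cref{defi:RLDif} --- is exactly the standard derivation in that reference. Your handling of the one delicate point is also sound: the boundary term vanishes because $\varphi\in\contl{\infty}{\Omega}$ vanishes identically near $x=0$ and $\psi\in\contr{\infty}{\Omega}$ vanishes identically near $x=1$ (so that $\frint{1-s}{\psi}$ is in fact zero on a neighborhood of $1$ and smooth up to the endpoint, justifying both the endpoint evaluation and the integration by parts).
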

%----------------------------------------------------------------------------------------------
\subsection{Equivalent Norms in $\hst{s}{\Omega}$}
We continue by analyzing equivalent norms in $\hst{s}{\Omega}$ that will facilitate our analysis of variational formulations of FPDEs. 

\begin{lemma}\label{lem:eqNorm}
  Let $s>0$, $s\not\in\{\tfrac{1}{2},\tfrac{3}{2},\hdots\}$. Then, for $\varphi\in\hst{s}{\Omega}$ we have the relation 
	$    
	\pr{\lp{2}{\Omega}}{\flder{s}{\varphi}}{\frder{s}{\varphi}} 
	= \cos\left(\pi s\right)\norm{\lp{2}{\IR}}{\flder{s}{\varphi}}^2 
	= \cos\left(\pi s\right)\norm{\lp{2}{\IR}}{\frder{s}{\varphi}}^2$.
\end{lemma}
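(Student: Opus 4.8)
The plan is to move everything to the Fourier side on all of $\IR$, where the two Riemann--Liouville derivatives act as Fourier multipliers whose symbols are complex conjugates of one another; the factor $\cos(\pi s)$ will then appear as the real part of $(\iu\xi)^{2s}$ after integration against the even weight $|\widehat{\varphi}|^2$. Identifying $\varphi\in\hst{s}{\Omega}$ with its extension by zero (so that $\varphi\in\hs{s}{\IR}$, using $s\notin\{\tfrac12,\tfrac32,\dots\}$), the first and main step is to establish the symbol identities
\[
  \ff{\flder{s}{\varphi}}(\xi) = (\iu\xi)^s\,\widehat{\varphi}(\xi),
  \qquad
  \ff{\frder{s}{\varphi}}(\xi) = (-\iu\xi)^s\,\widehat{\varphi}(\xi),
\]
where $(\pm\iu\xi)^s := |\xi|^s\,e^{\pm\iu\frac{\pi}{2}s\,\operatorname{sgn}\xi}$ is the principal branch and I use the convention $\ff{g}(\xi)=\int_{\IR}g(x)e^{-\iu\xi x}\,\d x$. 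These follow by writing the fractional integrals of order $\ceil{s}-s$ as convolutions against the one-sided kernels $t\mapsto t^{\ceil{s}-s-1}/\Gamma(\ceil{s}-s)$ (supported on $t>0$) and $t\mapsto|t|^{\ceil{s}-s-1}/\Gamma(\ceil{s}-s)$ (supported on $t<0$), whose Fourier transforms are $(\iu\xi)^{-(\ceil{s}-s)}$ and $(-\iu\xi)^{-(\ceil{s}-s)}$, and by then multiplying with the symbol $(\iu\xi)^{\ceil{s}}$ of $\d^{\ceil{s}}\!/\d x^{\ceil{s}}$. I would verify these identities first for $\varphi\in\conto{\infty}{\Omega}$, where the derivatives are classical, and extend to $\hst{s}{\Omega}$ by density together with the boundedness $\flder{s}{},\frder{s}{}\colon\hst{s}{\Omega}\to\lp{2}{\IR}$, which the symbols give at once since $\int_{\IR}|\xi|^{2s}|\widehat{\varphi}(\xi)|^2\,\d\xi\le C\,\norm{\hs{s}{\IR}}{\varphi}^2$.

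The left-hand inner product is taken over $\Omega$, whereas the two norms on the right are over $\IR$, so these domains must be reconciled. Since $\varphi$ is supported in $[0,1]$ and the kernels are one-sided, $\flder{s}{\varphi}$ vanishes on $(-\infty,0)$ and $\frder{s}{\varphi}$ vanishes on $(1,\infty)$; hence the pointwise product $\flder{s}{\varphi}\cdot\frder{s}{\varphi}$ is supported in $\overline{\Omega}$ and
\[
  \pr{\lp{2}{\Omega}}{\flder{s}{\varphi}}{\frder{s}{\varphi}}
  = \pr{\lp{2}{\IR}}{\flder{s}{\varphi}}{\frder{s}{\varphi}}.
\]

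With this identification I would apply Plancherel's identity on $\IR$. Using $\overline{(-\iu\xi)^s}=(\iu\xi)^s$, the cross term has integrand $(\iu\xi)^{2s}|\widehat{\varphi}(\xi)|^2=|\xi|^{2s}e^{\iu\pi s\,\operatorname{sgn}\xi}|\widehat{\varphi}(\xi)|^2$. As $\varphi$ is real-valued, $|\widehat{\varphi}|^2$ is even, so the odd imaginary part integrates to zero while the even real part contributes the factor $\cos(\pi s)$, giving $\pr{\lp{2}{\IR}}{\flder{s}{\varphi}}{\frder{s}{\varphi}}=\cos(\pi s)\,\tfrac{1}{2\pi}\int_{\IR}|\xi|^{2s}|\widehat{\varphi}(\xi)|^2\,\d\xi$. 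Since $|(\pm\iu\xi)^s|=|\xi|^s$, both $\norm{\lp{2}{\IR}}{\flder{s}{\varphi}}^2$ and $\norm{\lp{2}{\IR}}{\frder{s}{\varphi}}^2$ equal $\tfrac{1}{2\pi}\int_{\IR}|\xi|^{2s}|\widehat{\varphi}(\xi)|^2\,\d\xi$; combining the last three observations yields the claimed chain of equalities, the exact Plancherel constant being irrelevant as it cancels between the two sides.

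The main obstacle is the first step: pinning down the correct branch of $(\pm\iu\xi)^s$ and computing the Fourier transforms of the one-sided Riemann--Liouville kernels, a tempered-distribution computation that must be carried out with care, since it is exactly the $\operatorname{sgn}\xi$ in the phase that generates $\cos(\pi s)$ and a single sign slip would alter the stated constant. The localization in the second step, which silently turns the $\lp{2}{\Omega}$ product into an $\lp{2}{\IR}$ product, likewise has to be justified from the one-sided support of the two derivatives rather than taken for granted.
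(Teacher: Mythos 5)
Your proof is correct and takes essentially the same route as the paper, whose one-line proof simply defers to \cite[Thm.\ 2.3, Lem.\ 2.4]{ervin2006variational}: those cited results are established by exactly your argument --- the Fourier symbols $(\pm\iu\xi)^s$ with the $\operatorname{sgn}\xi$ phase, Plancherel, parity of $|\cF(\varphi)|^2$ killing the odd imaginary part, the one-sided support observation identifying the $\lp{2}{\Omega}$ pairing with the $\lp{2}{\IR}$ one, and density of $\conto{\infty}{\Omega}$ in $\hst{s}{\Omega}$. The only cosmetic discrepancy is that your convention $\cF g(\xi)=\int_{\IR}g(x)e^{-\iu\xi x}\,\d x$ produces the symbol $(\iu\xi)^s$ where the paper's \eqref{eq:Lemma2.7} (with the opposite transform convention) has $(-\iu\omega)^s$; both are internally consistent and yield the same constant $\cos(\pi s)$.
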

\begin{proof}
  The proof follows from \cite[Thm.\ 2.3, Lem.\ 2.4]{ervin2006variational}.
\end{proof}

\begin{proposition}\label{Cor:NormEq}
	Set $\tripnorm{s}{\varphi}:=(\norm{\lp{2}{\Omega}}{\varphi}^2+\norm{\lp{2}{\IR}}{\flder{s}{\varphi}}^2)^{\frac{1}{2}}$ as well as $\seminorm{s}{\varphi}:=\norm{\lp{2}{\IR}}{\flder{s}{\varphi}}$. Then, for all $\varphi\in \hst{s}{\Omega}$,
	\begin{align}
		\norm{\hst{s}{\Omega}}{\varphi}
		\leq \tripnorm{s}{\varphi}
		\leq \sqrt{2}\, \norm{\hst{s}{\Omega}}{\varphi},
		&\qquad
		\tfrac{\Gamma(s+1)}{\sqrt{2}} \norm{\hst{s}{\Omega}}{\varphi}
		\le \seminorm{s}{\varphi}
		\leq \norm{\hst{s}{\Omega}}{\varphi}
		\label{eq:NE}.
	\end{align}
\end{proposition}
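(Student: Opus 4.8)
The plan is to work on the Fourier side. I would identify $\varphi\in\hst{s}{\Omega}$ with its extension by zero to $\IR$ and set $\hat\varphi:=\ff{\varphi}$. Since $\varphi$ is supported in $\overline{\Omega}$, the left Riemann--Liouville derivative acts on it as a Fourier multiplier with symbol $(\iu\xi)^{s}$ — this is exactly the computation underlying \Cref{lem:eqNorm} — and $\abs{(\iu\xi)^{s}}=\abs{\xi}^{s}$. Plancherel's theorem then yields the three representations
\begin{align*}
  \norm{\lp{2}{\Omega}}{\varphi}^2=\int_{\IR}\abs{\hat\varphi(\xi)}^2\,\d\xi,
  \qquad
  \seminorm{s}{\varphi}^2=\int_{\IR}\abs{\xi}^{2s}\abs{\hat\varphi(\xi)}^2\,\d\xi,
  \qquad
  \norm{\hst{s}{\Omega}}{\varphi}^2=\int_{\IR}(1+\abs{\xi}^2)^s\abs{\hat\varphi(\xi)}^2\,\d\xi,
\end{align*}
so that $\tripnorm{s}{\varphi}^2=\int_{\IR}(1+\abs{\xi}^{2s})\abs{\hat\varphi(\xi)}^2\,\d\xi$. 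All four claimed bounds then reduce to comparing the scalar weights $(1+t)^s$, $1+t^s$ and $t^s$ for $t=\abs{\xi}^2\ge 0$, integrated against $\abs{\hat\varphi}^2$.

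Three of the inequalities follow from elementary pointwise estimates. First, $t^s\le (1+t)^s$ gives $\seminorm{s}{\varphi}\le\norm{\hst{s}{\Omega}}{\varphi}$, the right-hand bound in the second chain (consistent with \Cref{lem:derCont}). Next, $(1+t)^s\le 1+t^s$ — valid for $s\in(0,1]$ by monotonicity of $f(t)=(1+t)^s-1-t^s$, which satisfies $f(0)=0$ and, since $s-1\le 0$, $f'(t)=s\bigl[(1+t)^{s-1}-t^{s-1}\bigr]\le 0$ — yields $\norm{\hst{s}{\Omega}}{\varphi}\le\tripnorm{s}{\varphi}$. Finally, $1+t^s\le 2(1+t)^s$, which follows from $\max\{1,t^s\}\le(1+t)^s$, gives $\tripnorm{s}{\varphi}\le\sqrt2\,\norm{\hst{s}{\Omega}}{\varphi}$.

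The remaining bound $\tfrac{\Gamma(s+1)}{\sqrt2}\norm{\hst{s}{\Omega}}{\varphi}\le\seminorm{s}{\varphi}$ is the substantive one and I expect it to be the main obstacle, since it is a fractional Poincar\'e inequality rather than a pointwise Fourier statement: the naive pointwise estimate fails near $\xi=0$, so the argument must exploit that $\varphi$ is supported in the bounded set $\Omega$. Here I would use that $\flint{s}{}$ inverts $\flder{s}{}$ on $\hst{s}{\Omega}$: by \Cref{lem:derInv} together with the vanishing of the zero extension at the left endpoint (justified via density of $\contl{\infty}{\Omega}$), one has $\varphi=\flint{s}{(\flder{s}{\varphi})}$ on $\Omega$. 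Applying the $\lp{2}{\Omega}$-boundedness of $\flint{s}{}$ with constant $1/\Gamma(s+1)$ from \Cref{lem:IntSg}, and $\norm{\lp{2}{\Omega}}{\flder{s}{\varphi}}\le\norm{\lp{2}{\IR}}{\flder{s}{\varphi}}=\seminorm{s}{\varphi}$, produces the Poincar\'e estimate $\norm{\lp{2}{\Omega}}{\varphi}^2\le\Gamma(s+1)^{-2}\seminorm{s}{\varphi}^2$.

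Combining this with the already established $\norm{\hst{s}{\Omega}}{\varphi}^2\le\tripnorm{s}{\varphi}^2=\norm{\lp{2}{\Omega}}{\varphi}^2+\seminorm{s}{\varphi}^2\le\bigl(1+\Gamma(s+1)^{-2}\bigr)\seminorm{s}{\varphi}^2$, and using that $\Gamma(s+1)\le 1$ on the relevant range $s\in(0,1)$ (so that $1+\Gamma(s+1)^{-2}\le 2\Gamma(s+1)^{-2}$), gives $\tfrac{\Gamma(s+1)^2}{2}\norm{\hst{s}{\Omega}}{\varphi}^2\le\seminorm{s}{\varphi}^2$, which is precisely the asserted factor $\Gamma(s+1)/\sqrt2$. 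The delicate points to verify carefully are the validity of the inversion identity $\varphi=\flint{s}{\flder{s}{\varphi}}$ on all of $\hst{s}{\Omega}$ and the sign of $s-1$ in the scalar inequality $(1+t)^s\le 1+t^s$, which pins down the admissible range of the fractional order.
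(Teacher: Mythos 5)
Your proof is correct and follows essentially the same route as the paper: identify $\varphi$ with its zero extension, use the Fourier symbol of $\flder{s}{}$ with Plancherel to reduce the norm comparisons to the scalar weights $t^s$, $(1+t)^s$ and $1+t^s$ (valid precisely for $s\in(0,1]$, the range actually used, as you correctly flag), and obtain the one non-pointwise bound from the fractional Poincar\'e inequality $\norm{\lp{2}{\Omega}}{\varphi}\le\Gamma(s+1)^{-1}\seminorm{s}{\varphi}$ combined with $\Gamma(s+1)\le 1$, which yields exactly the constant $\Gamma(s+1)/\sqrt{2}$. Two remarks. First, where the paper cites \cite[Thm.\ 2.10]{ervin2006variational} for the Poincar\'e step and defers the bound $\tripnorm{s}{\varphi}\le\sqrt{2}\,\norm{\hst{s}{\Omega}}{\varphi}$ to \cite[Thm.\ 2.1 and 2.2]{jin}, you prove both in-house; your pointwise estimate $1+t^s\le 2(1+t)^s$ is a clean elementary replacement for the latter citation. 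Second, a small misattribution: \Cref{lem:derInv} gives $\flder{s}{\flint{s}{\psi}}=\psi$, i.e.\ the composition in the \emph{opposite} order from the identity $\varphi=\flint{s}{\flder{s}{\varphi}}$ you invoke, so it applies only once you know $\varphi$ lies in the range of $\flint{s}{}$. The repair is exactly the density argument you gesture at: for $\varphi\in\contl{\infty}{\Omega}$ and $s\in(0,1)$, \Cref{lem:exchange} and the semigroup property in \Cref{lem:IntSg} give $\flint{s}{\flder{s}{\varphi}}=\flint{s}{\flint{1-s}{\varphi'}}=\flint{1}{\varphi'}=\varphi$ (using $\varphi(0)=0$), and both sides are continuous from $\hstl{s}{\Omega}$ to $\lp{2}{\Omega}$ by \Cref{lem:derCont,lem:IntSg}, so the identity extends by density. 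With that patch your argument is complete and reproduces \eqref{eq:NE} with the stated constants.
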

\begin{proof}
  By consequence of \cite[Thm.\  2.10]{ervin2006variational} together with \Cref{lem:IntSg}
  we have that	$\norm{\lp{2}{\Omega}}{\varphi} \leq {\Gamma\left(s+1\right)^{-1}}\norm{\lp{2}{\Omega}}{\flder{s}{\varphi}}$
  for all $\varphi\in\hstl{s}{\Omega}$.
  Further considering $\varphi\in\conto{\infty}{\Omega}$, we have that
  $$2\, \norm{\lp{2}{\IR}}{\flder{s}{\varphi}}^2
  \geq \Gamma\left(s+1\right)^2\norm{\lp{2}{\Omega}}{\varphi}^2 + \norm{\lp{2}{\IR}}{\flder{s}{\varphi}}^2
  \geq \Gamma\left(s+1\right)^2 \tripnorm{s}{\varphi}^2,$$
  where we have used that $\Gamma(x) < 1$ for $x\in(1,2)$. Next, we use
  \begin{align}\label{eq:Lemma2.7}
    \cF(\flder{s}{\varphi})(\omega)=(-\imath\omega)^{s}\cF(\varphi)(\omega)
    \quad\text{for}\quad \varphi\in\conto{\infty}{\IR}
  \end{align}
  (see, e.g.\ \cite[Rem.\ 2.11]{kilbas2006theory}) to deduce 
  \begin{align*}
    \tripnorm{s}{\varphi}^2
    &= \int\limits_\IR (1+\abs{\omega}^{2s})\abs{\cF(\varphi)(\omega)}^2\d\omega
      \geq \int\limits_\IR (1+\abs{\omega}^{2})^s\abs{\cF(\varphi)(\omega)}^2\d\omega 
      = \norm{\hs{s}{\IR}}{\varphi}^2.
  \end{align*}
  In addition, we have that $\norm{\hs{s}{\IR}} \varphi \leq\sqrt{2}\,\Gamma(s+1)^{-1}\norm{\lp{2}{\IR}}{\flder{s}{\varphi}}$, i.e., the left-handed inequalities in \eqref{eq:NE} for all $\varphi\in\conto{\infty}{\Omega}$. Since $\conto{\infty}{\Omega}$ is dense in $\hst{s}{\Omega}$, we can conclude the estimates also for $\varphi\in\hst{s}{\Omega}$. 
  Concerning the upper bounds, consider $\varphi\in\contl{\infty}{\Omega}$ and, by the Plancherel's theorem and \eqref{eq:Lemma2.7} we get that
  \begin{align*}
    \norm{\lp{2}{\Omega}}{\flder{s}{\varphi}}
    &\leq \seminorm{s}{\varphi}
      =  \norm{\lp{2}{\IR}}{\flder{s}{\varphi}} 
      = \norm{\lp{2}{\IR}}{\cF(\flder{s}{\varphi})(\omega)}
      = \norm{\lp{2}{\IR}}{(-\imath\omega)^s\cF({\varphi})(\omega)}\\ 
    &= \int\limits_\IR \abs{\omega}^{2s}\abs{\cF(\varphi)(\omega)}^2\d\omega
      \leq \int\limits_\IR(1+\abs{\omega}^2)^s\abs{\cF(\varphi)(\omega)}^2\d\omega = \norm{\hs{s}{\IR}}{\varphi}.
  \end{align*}
  The remaining claims follow as in the proofs of \cite[Thm.\ 2.1 and 2.2]{jin}.
\end{proof}

%-------------------------------------------------------------------------------------------------------
\section{Riemann-Liouville Fractional Problem}
\label{ssec:varfor}
% -------------------------------------------------------------------------------------------------------
% -------------------------------------------------------------------------------------------------------
We continue by deriving a variational formulation for \eqref{eq:PDE} (which, to the best of our knowledge, has not
been considered before) and analyze its well-posedness and smoothness of its solutions.
\subsection{Variational Formulation}
We shall work under the following assumptions on the data:
\begin{assumption}\label{as:param}
  Let $s\in (1, 2)$, $d\in \lp{\infty}{\Omega}$ such that $d(x)\ge d_0>0$ for almost all $x\in\Omega$,  $r\in\lp{\infty}{\Omega}$ and $f\in\lp{2}{\Omega}$.
\end{assumption}

Then, we shall consider the fractional differential operator with non-constant diffusion coefficients as
\begin{align}\label{eq:secondOrderDOp}
	\flderd{d}{s}{u}(x) :=  \flder{\sh}{\left(d(x)\, \flder{\sh}{u}(x)\right)},
\end{align}
As mentioned in \Cref{sec:intro}, similar operators were studied in \cite{mao2016efficient}--where the innermost left-sided derivative in \cref{eq:problem} is replaced by a right-sided derivative--and in \cite{wang2013wellposedness}--where the coefficent stands \enquote{outside} of the fractional derivative, i.e., $\tfrac{\d}{\d x}(d(x)\flder{s-1}{u}(x))$. Both choices lead to milder conditions on the parameter $d(x)$ for the well-posedness of the considered variational formulations.

We begin by considering the left-hand side of \cref{eq:problem} with $u\in \conto{\infty}{\Omega}$. Multiplication with a test function $v\in \conto{\infty}{\Omega}$ and integration over $\Omega$ yields
\begin{align} \label{eq:varForm1}
  	-\pr{\lp{2}{\Omega}}{\flderd{d}{s}{u}}{v} + \pr{\lp{2}{\Omega}}{r\, u}{v}.
\end{align}
Applying \Cref{prop:intByParts} yields,
\begin{align*}
 	 -\pr{\lp{2}{\Omega}}{\flderd{d}{s}{u}}{v} 
	 	&=   -\pr{\lp{2}{\Omega}}{\flder{\sh} {\big(} d\, {\flder{\sh}{u}} {\big)} }{v}
		= -\pr{\lp{2}{\Omega}}{d\, \flder{\sh}{u}}{\frder{\sh}{v}}.
\end{align*}
Therefore, given \Cref{as:param}, we introduce the following bilinear and linear forms
\begin{align}
	\bil{u}{v}
		&:=-\pr{\lp{2}{\Omega}}{ d\, \flder{\sh}{u}}{\, \frder{\sh}v} 
			+ \pr{\lp{2}{\Omega}}{r\, u}{v}
		=: a_1(u,v) + a_2(u,v),
		\label{eq:nonConstBilForm}\\
  	\lin{v}
		&:=\pr{\lp{2}{\Omega}}{f}{v}.\label{eq:linForm}
\end{align}

\begin{problem}\label{prob:nonConstCoeff}
  	Seek $u\in\hst{\sh}{\Omega}$ such that $\bil{u}{v} = \lin{v}$ for all $v\in\hst{\sh}{\Omega}$.
\end{problem}

To investigate the well-posedness of \Cref{prob:nonConstCoeff}, we need some preparations.

\begin{lemma}\label{lem:nonConstCont}
  If \Cref{as:param} holds, the bilinear form in \cref{eq:nonConstBilForm} is continuous
  in $\hst{\sh}{\Omega}$, i.e., with $C_{d,r}:= 2\, (\norm{\lp{\infty}{\Omega}}{d} + \norm{\lp{\infty}{\Omega}}{r})$
  \begin{align*}
  	\bil{u}{v} \le  C_{d,r}\,
		\norm{\hst{s/2}{\Omega}}{u} \norm{\hst{s/2}{\Omega}}{v}
		\quad \forall\, u, v\in \hst{\sh}{\Omega}.
  \end{align*}
\end{lemma}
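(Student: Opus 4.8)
The plan is to split the bilinear form as $\bil{u}{v}=a_1(u,v)+a_2(u,v)$ as in \eqref{eq:nonConstBilForm}, estimate the two parts separately, and add the bounds. The reaction part is immediate: by the Cauchy--Schwarz inequality together with $r\in\lp{\infty}{\Omega}$ one gets $\abs{a_2(u,v)}\le\norm{\lp{\infty}{\Omega}}{r}\norm{\lp{2}{\Omega}}{u}\norm{\lp{2}{\Omega}}{v}$, and since the $\lp{2}{\Omega}$-norm is a part of (hence dominated by) the $\hst{\sh}{\Omega}$-norm, this is at most $\norm{\lp{\infty}{\Omega}}{r}\norm{\hst{\sh}{\Omega}}{u}\norm{\hst{\sh}{\Omega}}{v}$.

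For the diffusion part I would again apply Cauchy--Schwarz and factor out $\norm{\lp{\infty}{\Omega}}{d}$, reducing matters to bounding $\norm{\lp{2}{\Omega}}{\flder{\sh}{u}}$ and $\norm{\lp{2}{\Omega}}{\frder{\sh}{v}}$ by the respective $\hst{\sh}{\Omega}$-norms. For the first factor I would pass from $\Omega$ to the whole line, $\norm{\lp{2}{\Omega}}{\flder{\sh}{u}}\le\norm{\lp{2}{\IR}}{\flder{\sh}{u}}$, and then invoke the upper bound $\norm{\lp{2}{\IR}}{\flder{\sh}{u}}\le\norm{\hst{\sh}{\Omega}}{u}$ from \Cref{Cor:NormEq}. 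This step is legitimate because for $s\in(1,2)$ the order $\sh$ lies in $(\tfrac12,1)$, so it is not a half-integer and the hypotheses of \Cref{Cor:NormEq} are met.

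The only point requiring a little care is the right-sided derivative of $v$, since \Cref{Cor:NormEq} is phrased only for the left-sided operator. Here I would appeal to \Cref{lem:eqNorm} applied with fractional order $\sh$: because $\cos(\pi\sh)\neq 0$ (as $s\neq 1$ forces $\sh\neq\tfrac12$), it yields the $L^2(\IR)$-identity $\norm{\lp{2}{\IR}}{\frder{\sh}{v}}=\norm{\lp{2}{\IR}}{\flder{\sh}{v}}$, so the right-sided term inherits exactly the same estimate $\norm{\lp{2}{\Omega}}{\frder{\sh}{v}}\le\norm{\lp{2}{\IR}}{\frder{\sh}{v}}\le\norm{\hst{\sh}{\Omega}}{v}$. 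All quantities are well defined on $\hst{\sh}{\Omega}$ through the bounded extensions of the fractional derivatives (\Cref{lem:derCont}), so no separate density argument is needed. Collecting the two estimates gives $\abs{\bil{u}{v}}\le(\norm{\lp{\infty}{\Omega}}{d}+\norm{\lp{\infty}{\Omega}}{r})\norm{\hst{\sh}{\Omega}}{u}\norm{\hst{\sh}{\Omega}}{v}$, which already implies the claim, the factor $2$ in $C_{d,r}$ merely providing a convenient margin. The only (mild) obstacle is thus the transfer from the left- to the right-sided operator via \Cref{lem:eqNorm}; everything else is Cauchy--Schwarz combined with the norm equivalences already at hand.
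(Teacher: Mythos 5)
Your proof is correct and follows essentially the same route as the paper's: Cauchy--Schwarz on each term, passing from $\lp{2}{\Omega}$- to $\lp{2}{\IR}$-norms, the left/right equality $\norm{\lp{2}{\IR}}{\frder{\sh}{v}}=\norm{\lp{2}{\IR}}{\flder{\sh}{v}}$ from \Cref{lem:eqNorm} (valid since $\cos(\pi\sh)\neq 0$ for $\sh\in(\tfrac12,1)$, exactly as the paper uses implicitly), and the upper bounds of \Cref{Cor:NormEq}. The only deviation is that you bound $a_1$ and $a_2$ separately rather than recombining them via the triple norm $\tripnorm{s/2}{\cdot}$ as the paper does, which in fact yields the sharper constant $\norm{\lp{\infty}{\Omega}}{d}+\norm{\lp{\infty}{\Omega}}{r}$ and shows the factor $2$ in $C_{d,r}$ is not needed.
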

\begin{proof}
  Clearly, for $u,v\in\hst{\sh}{\Omega}$, it holds that
  \begin{align*}
    a_1(u,v)
    &\leq
    \norm{\lp{\infty}{\Omega}}{d}\, \abs{\pr{\lp{2}{\Omega}}{\flder{\sh}{u}}{\, \frder{\sh}v}} 
    \leq \norm{\lp{\infty}{\Omega}}{d}\,\norm{\lp{2}{\Omega}}{\flder{\sh}{u}}\norm{\lp{2}{\Omega}}{\frder{\sh}{v}}\\
    &\leq \norm{\lp{\infty}{\Omega}}{d}\,\norm{\lp{2}{\IR}}{\flder{\sh}{u}}\norm{\lp{2}{\IR}}{\frder{\sh}{v}}   
    = \norm{\lp{\infty}{\Omega}}{d}\,\norm{\lp{2}{\IR}}{\flder{\sh}{u}}\norm{\lp{2}{\IR}}{\flder{\sh}{v}} 
  \end{align*}
  yielding the bound  
  	\begin{align*}
		\bil{u}{v} 
		&\le (\norm{\lp{\infty}{\Omega}}{d} + \norm{\lp{\infty}{\Omega}}{r})\,
		(\norm{\lp{2}{\IR}}{\flder{\sh}{u}}^2 + \norm{\lp{2}{\Omega}}{u}^2)^{1/2}
		\\
		&\qquad\qquad\qquad\qquad\qquad\quad\times
		(\norm{\lp{2}{\IR}}{\flder{\sh}{v}}^2 + \norm{\lp{2}{\Omega}}{v}^2)^{1/2}\\
		&\le 2\, (\norm{\lp{\infty}{\Omega}}{d} + \norm{\lp{\infty}{\Omega}}{r})\,
		\norm{\hst{s/2}{\Omega}}{u} \norm{\hst{s/2}{\Omega}}{v},
	\end{align*}
  	by \Cref{Cor:NormEq}, which proves the claim.
\end{proof}

To prove the coercivity of the bilinear form in \cref{eq:nonConstBilForm}, we need additional conditions on $d\in\lp{\infty}{\Omega}$.

\begin{assumption}\label{as:dParam}
	We define the \emph{average} $\average{d}$ and the \emph{range} $\range{d}$  of $d$ as
	\begin{align*}
	\average{d}
		:=\frac{1}{2}\Big(\esssup\limits_{x\in\Omega}d(x) 
			+ \essinf\limits_{x\in\Omega}d(x)\Big),
	&\quad
	\range{d}
		:=\frac{1}{2}\Big(\esssup\limits_{x\in\Omega}d(x) 
			- \essinf\limits_{x\in\Omega}d(x)\Big)
	\end{align*}
	as well as $\underline{r} := \essinf\limits_{x\in\Omega}r(x)$. 
	Setting $\gamma_{s,d}:= \average{d}\,\abs{\cos\left(s\tfrac{\pi}{2}\right)} - \range{d}$, we assume in addition to \Cref{as:param} that 
  	\begin{align}\label{eq:Ass-d}
		c_{s,d,r} := \gamma_{s,d} \tfrac{\Gamma(s/2+1)^2}{4} + \underline{r} \ge 0.
	\end{align}
\end{assumption}

\begin{remark}
\begin{compactenum}[(a)]
	\item It is immediate that $\norm{\lp{\infty}{\Omega}}{d-\average{d}} = \range{d}$.
	\item As $s\to 2$, the value $\abs{\cos(s\tfrac{\pi}{2})}$ converges to $1$, so that $\gamma_{s,d}=\essinf\limits_{x\in\Omega}d(x)$. On the other hand, if $s\to 1$, the value of  $\cos(s\tfrac{\pi}{2})$ tends to zero, which means that $d$ can only vary very little since $\range{d}$ must be very small to satisfy \eqref{eq:Ass-d}.
\end{compactenum}
\end{remark}

The following result is a generalization of \cite[Thm.\ 4.3]{jin} for non-constant diffusion. Moreover, we shall make the involved constants explicit for later use.

\begin{theorem}\label{thm:uniqueNonConst}
  If \Cref{as:param,as:dParam} hold, $a(\cdot,\cdot)$ is coercive, i.e.,
  \begin{align*}
  	a(u,u) \ge \alpha_{s,d}\, \norm{\hst{\sh}{\Omega}}{u}^2,\,\, u\in  \hst{\sh}{\Omega}
	\qquad\text{where }
	\alpha_{s,d} = \gamma_{s,d} \tfrac{\Gamma(s/2+1)^4}{8}.
  \end{align*}
  Moreover, \Cref{prob:nonConstCoeff} admits a unique solution $u\in\hst{\sh}{\Omega}$ such that
  \begin{align}\label{eq:contDepNon}
    \norm{\hst{\sh}{\Omega}}{u}\leq \tfrac{1}{\alpha_{s,d}} \norm{\lp{{2}}{\Omega}}{f}.
  \end{align}
\end{theorem}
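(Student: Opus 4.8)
The plan is to establish well-posedness through the Lax--Milgram theorem on $\hst{\sh}{\Omega}$. Continuity of $\bil{\cdot}{\cdot}$ is already supplied by \Cref{lem:nonConstCont}, and the functional $\lin{\cdot}$ is bounded since $\abs{\lin{v}}\le\norm{\lp{2}{\Omega}}{f}\,\norm{\lp{2}{\Omega}}{v}\le\norm{\lp{2}{\Omega}}{f}\,\norm{\hst{\sh}{\Omega}}{v}$. Hence the whole statement reduces to proving the coercivity estimate with the precise constant $\alpha_{s,d}$. The structural point that makes this possible is that $s\in(1,2)$ forces $\cos(s\tfrac{\pi}{2})<0$, so that the otherwise indefinite diffusion term turns into a \emph{positive} multiple of $\norm{\lp{2}{\IR}}{\flder{\sh}{u}}^2$.

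First I would bound the diffusion part $a_1(u,u)=-\pr{\lp{2}{\Omega}}{d\,\flder{\sh}{u}}{\frder{\sh}{u}}$ from below via the splitting $d=\average{d}+(d-\average{d})$. For the constant part, \Cref{lem:eqNorm} applied with order $\sh$ gives $\pr{\lp{2}{\Omega}}{\flder{\sh}{u}}{\frder{\sh}{u}}=\cos(s\tfrac{\pi}{2})\,\norm{\lp{2}{\IR}}{\flder{\sh}{u}}^2$, whence $-\average{d}\,\pr{\lp{2}{\Omega}}{\flder{\sh}{u}}{\frder{\sh}{u}}=\average{d}\,\abs{\cos(s\tfrac{\pi}{2})}\,\norm{\lp{2}{\IR}}{\flder{\sh}{u}}^2$. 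For the fluctuating part I would use Cauchy--Schwarz together with $\norm{\lp{\infty}{\Omega}}{d-\average{d}}=\range{d}$, the bound $\norm{\lp{2}{\Omega}}{\flder{\sh}{u}}\le\norm{\lp{2}{\IR}}{\flder{\sh}{u}}$ from \Cref{lem:derCont}, and the identity $\norm{\lp{2}{\IR}}{\frder{\sh}{u}}=\norm{\lp{2}{\IR}}{\flder{\sh}{u}}$ of \Cref{lem:eqNorm}, estimating its modulus by $\range{d}\,\norm{\lp{2}{\IR}}{\flder{\sh}{u}}^2$. Subtracting yields $a_1(u,u)\ge\gamma_{s,d}\,\norm{\lp{2}{\IR}}{\flder{\sh}{u}}^2$, while $a_2(u,u)=\pr{\lp{2}{\Omega}}{r\,u}{u}\ge\underline{r}\,\norm{\lp{2}{\Omega}}{u}^2$ is immediate.

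The decisive step is absorbing the (possibly negative) reaction term. Substituting $\underline{r}=c_{s,d,r}-\gamma_{s,d}\tfrac{\Gamma(\sh+1)^2}{4}$ and discarding the nonnegative summand $c_{s,d,r}\,\norm{\lp{2}{\Omega}}{u}^2\ge0$ granted by \eqref{eq:Ass-d}, I obtain, using $\gamma_{s,d}\ge0$ (which is forced by requiring $\alpha_{s,d}>0$), the bound $a(u,u)\ge\gamma_{s,d}\big(\norm{\lp{2}{\IR}}{\flder{\sh}{u}}^2-\tfrac{\Gamma(\sh+1)^2}{4}\norm{\lp{2}{\Omega}}{u}^2\big)$. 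The Poincar\'e-type inequality $\Gamma(\sh+1)^2\norm{\lp{2}{\Omega}}{u}^2\le\norm{\lp{2}{\IR}}{\flder{\sh}{u}}^2$ --- the order-$\sh$ instance of the first estimate in the proof of \Cref{Cor:NormEq}, combined with $\norm{\lp{2}{\Omega}}{\cdot}\le\norm{\lp{2}{\IR}}{\cdot}$ --- shows the bracket is at least $\tfrac34\norm{\lp{2}{\IR}}{\flder{\sh}{u}}^2$, and the seminorm bound $\tfrac12\Gamma(\sh+1)^2\norm{\hst{\sh}{\Omega}}{u}^2\le\norm{\lp{2}{\IR}}{\flder{\sh}{u}}^2$ of \Cref{Cor:NormEq} then gives $a(u,u)\ge\tfrac38\gamma_{s,d}\Gamma(\sh+1)^2\norm{\hst{\sh}{\Omega}}{u}^2\ge\alpha_{s,d}\norm{\hst{\sh}{\Omega}}{u}^2$, the last step using $\Gamma(\sh+1)^2\le 3$ and recovering exactly $\alpha_{s,d}=\gamma_{s,d}\Gamma(\sh+1)^4/8$.

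With continuity, boundedness of $\lin{\cdot}$ and this coercivity, the Lax--Milgram theorem delivers a unique $u\in\hst{\sh}{\Omega}$ solving \Cref{prob:nonConstCoeff}. Testing with $v=u$ yields $\alpha_{s,d}\norm{\hst{\sh}{\Omega}}{u}^2\le a(u,u)=\lin{u}\le\norm{\lp{2}{\Omega}}{f}\,\norm{\hst{\sh}{\Omega}}{u}$, and dividing by $\alpha_{s,d}\norm{\hst{\sh}{\Omega}}{u}$ gives \eqref{eq:contDepNon}. I expect the only genuine difficulty to be the careful bookkeeping of constants in the coercivity step --- tracking the sign of $\cos(s\tfrac{\pi}{2})$ and the passage between the $\lp{2}{\Omega}$- and $\lp{2}{\IR}$-norms --- rather than anything conceptual, since the two nontrivial ingredients (the norm identity of \Cref{lem:eqNorm} and the equivalences of \Cref{Cor:NormEq}) are already available.
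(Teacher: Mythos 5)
Your proposal is correct and takes essentially the same route as the paper's proof: the splitting $d=\average{d}+(d-\average{d})$ combined with \Cref{lem:eqNorm} and Cauchy--Schwarz to get $a_1(u,u)\ge\gamma_{s,d}\,\seminorm{s/2}{u}^2$, absorption of the reaction term via \eqref{eq:Ass-d}, the norm equivalences of \Cref{Cor:NormEq}, and Lax--Milgram. Your bookkeeping differs only cosmetically --- you substitute $\underline{r}=c_{s,d,r}-\gamma_{s,d}\tfrac{\Gamma(\sh+1)^2}{4}$ and use the fractional Poincar\'e inequality to reach the sharper intermediate bound $\tfrac{3}{8}\gamma_{s,d}\Gamma(\sh+1)^2$, recovering $\alpha_{s,d}$ via $\Gamma(\sh+1)^2\le 3$, where the paper instead passes through $\tripnorm{s/2}{\cdot}$ --- and you make explicit the requirement $\gamma_{s,d}\ge 0$ that the paper uses silently.
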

\begin{proof}
	  By \Cref{lem:eqNorm}, we have that
	  \begin{align*}
	    a_1(u,u) 
	    &= -{\average{d}\pr{\lp{2}{\Omega}}{\flder{\sh}{u}}{\frder{\sh}{u}}} 
	    	- {\pr{\lp{2}{\Omega}}{(d-\average{d})\,\flder{\sh}{u}}{\frder{\sh}{u}}} \\
	    &\geq \left(\average{d}\abs{\cos\left(s\tfrac{\pi}{2}\right)} - \range{d}\right)\norm{\lp{2}{\IR}}{\flder{\sh}{u}}^2
	    = \gamma_{s,d}\, |u|_{s/2}^2.
	  \end{align*}
	  Thanks to \Cref{as:dParam} and by using \Cref{Cor:NormEq}, we get that 
	  \begin{align*}
	  a(u,u) &\ge \gamma_{s,d}\, \seminorm{s/2}{u}^2 + \underline{r}\, \norm{\lp{2}{\Omega}}{u}^2
        \ge \gamma_{s,d}\, \tfrac{\Gamma(s/2+1)^2}{4} \tripnorm{s/2}{u}^2 + \underline{r}\, \norm{\lp{2}{\Omega}}{u}^2\\
	  &\ge \gamma_{s,d}\, \tfrac{\Gamma(s/2+1)^2}{4} |u|_{s/2}^2
	  \ge \gamma_{s,d}\, \tfrac{\Gamma(s/2+1)^4}{8} \norm{\hst{\sh}{\Omega}}{u}^2.
	  \end{align*}
	  Well-posedness follows from the Lax-Milgram- resp.\ Banach-Ne\v{c}as theorem, \cite[\S4.5]{arendt2023partial}.
    \end{proof}

    \begin{remark}\label{Remark:coercivity}
 The coercivity constant given in \Cref{thm:uniqueNonConst} does not depend on $\underline{r}$ in \Cref{as:dParam}, but might not be optimal depending on the value of $\underline{r}$. If, e.g., $\underline{r}\geq 0$, we see that  $a(u,u) \ge \gamma_{s,d}\tfrac{\Gamma(s/2+1)^2}{2}\norm{\hst{\sh}{\Omega}}{u}^2$.
On the other hand, if $\underline{r}<0$, we may argue as in the proof of \Cref{Cor:NormEq} to obtain the bound $a(u,u) \ge \tfrac{1}{2}(\gamma_{s,d}{\Gamma(s/2+1)^2}+{\underline{r}})\, \norm{\hst{\sh}{\Omega}}{u}^2$, and get an alternative form of the coercivity constant as
	\begin{align}\label{eq:tildealpha}
		\tilde\alpha_{s,d,r} := \gamma_{s,d}\tfrac{\Gamma(s/2+1)^2}{2} + \tfrac12 \min\{ \underline{r},0\},
	\end{align}
	which depends on $\underline{r}$ and might be larger than $\alpha_{s,d}$ in \Cref{thm:uniqueNonConst}.
\end{remark}

\begin{remark}\label{Remark:infsup}
	\Cref{prob:nonConstCoeff} may also admit a unique solution under milder conditions replacing coercivity by an inf-sup condition, using Fredholm's alternative and/or the Peetre-Tartar lemma as in \cite{jin} for constant diffusion coefficients. We restrict ourselves here to the coercive case as it allows for an explicit formula for the coercivity constant and also since injectivity does not need to be assumed as in \cite{jin}. This is beneficial for the RBM in  \Cref{sec:rb} as we avoid the use of the \emph{Successive Constraint Method (SCM)} from \cite{SCM} to compute the inf-sup condition.
\end{remark}

\subsection{Strong Solutions}
\label{ssec:ssol}
%------------------------------------------------------------------------------------------
For our subsequent numerical experiments, we are interested in exact solutions in order to be able to compute approximation errors exactly. To this end, we now consider strong solutions to \Cref{prob:nonConstCoeff}. Once again, we follow the presentation in \cite[\S 3]{jin} (there for $d\equiv 1$) and consider the case $r(x)\equiv 0$. Their regularity will be analyzed later on. We set,
\begin{align*}
	g(x) := \,\flint{\sh}{(d(x)^{-1}\,  \flint{\sh}{f})(x)} =: \cI_d^s f(x),
\end{align*}
so that $\flint{1-\sh}{(d\, \flder{\sh}{g})} = \flint{1-\sh}{\flint{\sh}{f}} = \flint{1}f$, which gives $\flderd{d}{s}g=f$. Next, define
\begin{align}
  \label{eq:singTerm}
	\rho(x) := \, \flint{\sh}{[d(x)^{-1}\, x^{\sh-1}]},
	\qquad
	p(x) := \frac{\rho(x)}{\rho(1)}.
\end{align}
Then, $p(0)=\rho(0)=0$ and $p(1)=1$ as well as
\begin{align*}
  \flderd{d}{s}p 
	&= \tfrac{d}{dx}\! \left( \flint{1-\sh}{[d(x)\, \flder{\sh}{p}]}\right)
	= \tfrac1{\rho(1)} \tfrac{d}{dx} \left( \flint{1-\sh} x^{\sh-1}\right)
	= \tfrac1{\rho(1)} \tfrac{d}{dx} (1) 
	= 0,
\end{align*}
Hence, the strong solution of \Cref{prob:nonConstCoeff} reads
\begin{align}\label{eq:strongSolution}
	u=-g + [(\cI_d^s f)(1)]\, p. 
\end{align}
It is clear that $u\in\hst{\sh}{\Omega}$. In order to show that $u$ is a solution of \Cref{prob:nonConstCoeff}, let $v\in C^\infty_0(\Omega)$.Then, 
\begin{align}
  \label{eq:pzero}
  \begin{aligned}
  \dprs{\hs{-\sh}{\Omega}\times\hst{\sh}{\Omega}}{\flderd{d}{\sh}p}{v}
  &= \pr{\lp{2}{\Omega}}{d(x)\, \flder{\sh}p}{\frder{\sh}v}\\
  &\kern-100pt= \tfrac1{\rho(1)} \pr{\lp{2}{\Omega}}{d(x)\,d(x)^{-1}\, x^{\sh-1}}{\frder{\sh}v}
    = \tfrac1{\rho(1)} \pr{\lp{2}{\Omega}}{x^{\sh-1}}{\frint{1-\sh}{v'}}\\
  &\kern-100pt= \tfrac1{\rho(1)} \pr{\lp{2}{\Omega}}{\flint{1-\sh} x^{\sh-1}}{v'}
    = \tfrac{\Gamma(\sh)}{\rho(1)} \pr{\lp{2}{\Omega}}{1}{v'} = 0,
    \end{aligned}
\end{align}
where we have used \Cref{lem:exchange,lem:intByInt,lem:derInv} together with the fact that $\flint{1-\sh}{x^{\sh-1}}=\Gamma(\sh)$
(see, e.g.\ \cite[Prop.\ 2.1]{kilbas2006theory}). Analogously, we have that
\begin{align}
  \label{eq:geqf}
  \begin{aligned}
    \dprs{\hs{-\sh}{\Omega}\times\hst{\sh}{\Omega}}{\flderd{d}{s}g}{v}
    &= \pr{\lp{2}{\Omega}}{d(x)\, \flder{\sh}g}{\frder{\sh}v}\\
    &\kern-100pt= \pr{\lp{2}{\Omega}}{d(x)\, d(x)^{-1} \flint{\sh}{f}}{\frder{\sh}v}
      = \pr{\lp{2}{\Omega}}{\flint{\sh}{f}}{\frder{\sh}v} = \pr{\lp{2}{\Omega}}{f}{v}.
    \end{aligned}
\end{align}
Therefore, we have that by combination of \cref{eq:strongSolution,eq:pzero,eq:geqf}, we have that
\begin{align*}
 	\dprs{\hs{-\sh}{\Omega}\times\hst{\sh}{\Omega}}{\flderd{d}{s}u}{v} = \pr{\lp{2}{\Omega}}{f}{v},
\end{align*}
which shows that \eqref{eq:strongSolution} in fact solves \Cref{prob:nonConstCoeff}.

\subsection{Regularity}
%-----------------------------------------------------------------
We continue analyzing the regularity of the strong solution given in \eqref{eq:strongSolution}. By \cite[Prop.\ 2.1]{kilbas2006theory}, we have, for any $\beta > 0$,
\begin{align*}
	\flder{\beta}x^{\sh-1} = \frac{\Gamma(\sh)}{\Gamma(\sh-\beta)}x^{\sh - \beta - 1},
\end{align*}
which belongs to $\lp{2}{\Omega}$ whenever $\sh-\beta-1 > -\frac{1}{2}$, i.e., $x^{\sh-1}$ belongs to $\hstl{\sh-\frac{1}{2}-\epsilon}{\Omega}$ for any $\epsilon>0$. Assuming sufficient regularity of $d^{-1}$ (e.g., $d^{-1}\in\cont{1}{\overline{\Omega}}$, see \cite[La.\ 3.2]{ervin2006variational}), we have $d^{-1}(x)\,x^{\sh-1}\in\hstl{\sh-\frac{1}{2}-\epsilon}{\Omega}$ and, by \Cref{lem:IntCont}, we conclude that  $p\in\hstl{s-\frac{1}{2}-\epsilon}{\Omega}$ (which, again, concedes with the findings in \cite{jin} for the case $d\equiv 1$). Assuming, on the other hand, $f\in\lp{2}{\Omega}$ yields $g\in\hstl{s}{\Omega}$. Finally, arguing exactly as in the proof of \cite[Thm.\ 4.4]{jin}, yields the following result.

\begin{theorem}\label{thm:reg}
  Let \Cref{as:param,as:dParam} hold. Then, the solution $u$ to \Cref{prob:nonConstCoeff} belongs
  to $\hst{\beta}{\Omega}$ for any $\beta\in [\sh, s-\frac{1}{2})$.\hfill$\Box$
\end{theorem}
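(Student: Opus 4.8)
The plan is to upgrade the regularity of the unique variational solution $u\in\hst{\sh}{\Omega}$ from \Cref{thm:uniqueNonConst} by inserting it into the explicit strong-solution representation \eqref{eq:strongSolution}. The first step is to reduce the general reaction term to the case $r\equiv 0$ treated in \Cref{ssec:ssol}: since $u$ solves $-\flderd{d}{s}{u}=f-r\,u$ weakly and $r\in\lp{\infty}{\Omega}$, $u\in\hst{\sh}{\Omega}\hookrightarrow\lp{2}{\Omega}$, the modified datum $\tilde f:=f-r\,u$ again lies in $\lp{2}{\Omega}$. Applying the construction of \Cref{ssec:ssol} with right-hand side $\tilde f$ and invoking uniqueness, I would identify $u$ with the strong solution $u=-\cI_d^s\tilde f+[(\cI_d^s\tilde f)(1)]\,p$, so that it remains only to bound the regularity of the two summands.

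For the smoother summand I would chase the gain of \Cref{lem:IntCont} through the definition of $\cI_d^s$: from $\tilde f\in\lp{2}{\Omega}=\hstl{0}{\Omega}$ one gets $\flint{\sh}{\tilde f}\in\hstl{\sh}{\Omega}$; multiplication by $d^{-1}$ preserves this order (this is where $d^{-1}\in\cont{1}{\overline{\Omega}}$ enters as a pointwise-multiplier hypothesis via \cite[La.\ 3.2]{ervin2006variational}, legitimate because $\sh<1$); and a second application of $\flint{\sh}{}$ lands $\cI_d^s\tilde f$ in $\hstl{s}{\Omega}$.

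The second summand $p=\rho/\rho(1)$ carries the genuine obstruction. The factor $x^{\sh-1}$ satisfies $\flder{\beta}{x^{\sh-1}}\propto x^{\sh-\beta-1}$, which is square-integrable near the origin precisely when $\sh-\beta-1>-\tfrac12$, i.e.\ $\beta<\sh-\tfrac12$; hence $x^{\sh-1}\in\hstl{\sh-\frac12-\epsilon}{\Omega}$ for every $\epsilon>0$ and no better. Multiplying by $d^{-1}$ (again as a multiplier) and applying $\flint{\sh}{}$ via \Cref{lem:IntCont} yields $\rho,p\in\hstl{s-\frac12-\epsilon}{\Omega}$. Combining the two bounds, the less regular term governs and the representation places $u$ in $\hstl{s-\frac12-\epsilon}{\Omega}$. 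Since $u$ additionally has vanishing trace at the right endpoint, $u(1)=0$, and since $\tfrac12<\beta<\tfrac32$ in the relevant range, this one-sided regularity combines with the trace to give $u\in\hso{\beta}{\Omega}=\hst{\beta}{\Omega}$ for every $\beta<s-\tfrac12$; together with the a priori membership $u\in\hst{\sh}{\Omega}$ this is exactly the asserted range $\beta\in[\sh,s-\tfrac12)$.

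I expect the main obstacle to be the careful isolation of the endpoint singularity rather than the bookkeeping of smoothing orders: the ceiling $s-\tfrac12$ is forced entirely by the boundary term $x^{\sh-1}$ at $x=0$, and the delicate point is to justify that multiplication by $d^{-1}$ does not degrade the fractional order of the factors before $\flint{\sh}{}$ acts — which is why a multiplier condition such as $d^{-1}\in\cont{1}{\overline{\Omega}}$, stronger than \Cref{as:param,as:dParam} alone, is implicitly needed, and why one cannot reach $\beta=s-\tfrac12$.
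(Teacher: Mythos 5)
Your proposal is correct and takes essentially the same route as the paper: it combines the strong-solution representation \eqref{eq:strongSolution} with the regularity of $g$ and of the singular term $p$ established in the paragraph preceding \Cref{thm:reg}, and your reduction of the case $r\neq 0$ to the datum $\tilde f = f - r\,u \in \lp{2}{\Omega}$ is precisely the bootstrap the paper delegates to the cited proof of \cite[Thm.\ 4.4]{jin}. You also correctly surface the same implicit multiplier hypothesis ($d^{-1}\in\cont{1}{\overline{\Omega}}$, via \cite[La.\ 3.2]{ervin2006variational}) that the paper itself invokes beyond \Cref{as:param,as:dParam}.
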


\begin{remark}
We note that the result in \Cref{thm:reg} can \emph{not} be improved by assuming additional smoothness on the data $f$ and $d$, since the regularity of the solution in \cref{eq:strongSolution} is limited by the singular term $p$ in \cref{eq:singTerm}. Moreover, the statement remains valid for the case $r\neq 0$, even though we cannot construct the solution for this case as in \Cref{ssec:ssol}.
\end{remark}
%-------------------------------------------------------------------------------------------------------

%-------------------------------------------------------------------------------------------------------
\section{Finite Element Discretization}
\label{sec:fem}
%-------------------------------------------------------------------------------------------------------
In this section, we describe and analyze a Finite Element Method (FEM) for the numerical construction of the solutions to \Cref{prob:nonConstCoeff} based upon the variational formulation developed in \Cref{ssec:varfor}. 

To this end, we use  a family of standard uniform meshes $\{\cT_h\}_{h>0}$ with mesh-size $h>0$ by dividing $\Omega=(0,1)$ into $N_h\in\IN$ sub-intervals, where $N_h>1$ is a (possibly large) integer. We then define discrete subspaces $V_h \subset \hst{\sh}{\Omega}$ of continuous and piecewise linear functions on the sub-intervals in $\cT_h$. Due to the properties of $\hst{\sh}{\Omega}$, we can conclude that the functions in $V_h$ vanish at the boundaries of the domain $\Omega$. 

\subsection{Error Analysis}
%-------------------------------------------------------------------------------------------------------
We begin our analysis by presenting the approximation properties of the finite element space $V_h$ as given in \cite{jin}.

\begin{lemma}[{\cite[La.\ 5.1]{jin}}]\label{lem:FEMUh}
	Let $\nu \in [\sh,2]$, and the family of meshes $\{\cT_h\}_{h>0}$ be quasi-uniform. If $u \in H^{\nu}(\Omega) \cap \tilde{H}^{\sh}(\Omega)$, then there exists a constant $c>0$
	\begin{align}
	\inf_{v_h \in V_h}\norm{\hst{\sh}{\Omega}}{u-v_h} 
		\leq c\, h^{\nu-\sh}\norm{\hs{\nu}{\Omega}}{u}.
	\end{align}
\end{lemma}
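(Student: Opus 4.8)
The plan is to realise the infimum by a single quasi-interpolation operator and to reduce the fractional-order estimate to classical integer-order approximation theory via interpolation of Sobolev spaces. Because $\sh\in(\tfrac12,1)$ for $s\in(1,2)$, in one space dimension we have $\hs{\nu}{\Omega}\hookrightarrow\cont{0}{\overline{\Omega}}$ for every $\nu\ge\sh$, so any $u\in\hs{\nu}{\Omega}\cap\hst{\sh}{\Omega}$ has well-defined point values and satisfies $u(0)=u(1)=0$. I would therefore fix a Scott--Zhang-type quasi-interpolant $\Pi_h\colon\hst{\sh}{\Omega}\to V_h$ that reproduces continuous piecewise linears, preserves the homogeneous boundary values, is locally defined on element patches, and is stable in $\hst{\sh}{\Omega}$; its existence is classical for quasi-uniform meshes. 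Since $\Pi_h u\in V_h$, it then suffices to bound $\norm{\hst{\sh}{\Omega}}{u-\Pi_h u}$.

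First I would record the local estimates furnished by the Bramble--Hilbert lemma: on each element $\tau\in\cT_h$ with patch $\tilde\tau$,
\begin{align*}
\norm{\lp{2}{\tau}}{u-\Pi_h u}\le c\,h^{\ell}\,\snorm{\hs{\ell}{\tilde\tau}}{u},
\qquad
\snorm{\hs{1}{\tau}}{u-\Pi_h u}\le c\,h^{\ell-1}\,\snorm{\hs{\ell}{\tilde\tau}}{u},
\qquad \ell\in\{1,2\}.
\end{align*}
Squaring, summing over $\tau\in\cT_h$, and using quasi-uniformity to replace all local mesh sizes by the global $h$ yields
\begin{align*}
\norm{\lp{2}{\Omega}}{u-\Pi_h u}\le c\,h^{\ell}\,\norm{\hs{\ell}{\Omega}}{u},
\qquad
\norm{\hs{1}{\Omega}}{u-\Pi_h u}\le c\,h^{\ell-1}\,\norm{\hs{\ell}{\Omega}}{u}
\quad(\ell\in\{1,2\}),
\end{align*}
together with the zeroth-order stability bound $\norm{\hst{\sh}{\Omega}}{u-\Pi_h u}\le c\,\norm{\hst{\sh}{\Omega}}{u}$.

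The key step is to pass to the fractional target norm. Setting $w:=u-\Pi_h u$, which vanishes at the endpoints and hence lies in the homogeneous scale, the interpolation inequality $\norm{\hst{\sh}{\Omega}}{w}\le c\,\norm{\lp{2}{\Omega}}{w}^{1-\sh}\,\norm{\hst{1}{\Omega}}{w}^{\sh}$ holds because $\hst{\sh}{\Omega}=[\lp{2}{\Omega},\hst{1}{\Omega}]_{\sh,2}$ with equivalent norm — an identification valid precisely because $\sh$ avoids the exceptional half-integers, so that $\hst{\sh}{\Omega}=\hso{\sh}{\Omega}$. Combined with the $\ell=2$ bounds this gives the top endpoint estimate $\norm{\hst{\sh}{\Omega}}{w}\le c\,h^{2-\sh}\norm{\hs{2}{\Omega}}{u}$, while stability supplies the bottom one. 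Interpolating the operator $I-\Pi_h$ between these endpoints with parameter $\theta=(\nu-\sh)/(2-\sh)$, so that $[\hst{\sh}{\Omega},\hs{2}{\Omega}\cap\hst{\sh}{\Omega}]_{\theta}=\hs{\nu}{\Omega}\cap\hst{\sh}{\Omega}$ (with equivalent norm, by standard interpolation theory, \emph{c.f.}~\cite{mclean2000strongly}), yields
\begin{align*}
\norm{\hst{\sh}{\Omega}}{u-\Pi_h u}\le c\,\big(h^{2-\sh}\big)^{\theta}\norm{\hs{\nu}{\Omega}}{u}=c\,h^{\nu-\sh}\norm{\hs{\nu}{\Omega}}{u}
\end{align*}
for every $\nu\in[\sh,2]$ in one stroke. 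Taking the infimum over $V_h$ completes the argument.

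The main obstacle is the correct handling of the fractional norm rather than any individual estimate: one must identify $\hst{\sh}{\Omega}$ with the right interpolation space in the homogeneous (\enquote{tilde}) scale and verify that $\Pi_h$ is simultaneously stable on the low-regularity space $\hst{\sh}{\Omega}$, optimally accurate on $\hs{2}{\Omega}$, and $V_h$-conforming with the homogeneous boundary conditions. It is here that $s\in(1,2)$ enters crucially, guaranteeing $\sh\in(\tfrac12,1)$ so that point values exist and the exceptional index $\tfrac12$ is avoided. The norm equivalences of \Cref{Cor:NormEq} then allow one to phrase the final bound in whichever $\hst{\sh}{\Omega}$-norm is most convenient.
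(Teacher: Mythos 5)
Your proposal is correct and follows essentially the same route as the source: the paper offers no proof of its own but cites \cite{jin}, Lemma~5.1, where the estimate is likewise obtained by combining integer-order ($\ell=1,2$) approximation bounds for an interpolation operator with stability in the fractional norm and interpolation of the Sobolev scale, exactly as you do. Two minor remarks: in one dimension the condition $\nu\ge\sh>\tfrac12$ already makes the plain nodal (Lagrange) interpolant available, so the Scott--Zhang machinery is heavier than needed; and the identification $\big[\hst{\sh}{\Omega},\,\hs{2}{\Omega}\cap\hst{\sh}{\Omega}\big]_{\theta}=\hs{\nu}{\Omega}\cap\hst{\sh}{\Omega}$ is true but merits a sharper justification than a generic reference --- e.g.\ via the standard retract argument (subtracting a smooth lifting of the endpoint values, bounded simultaneously on $\hs{\sh}{\Omega}$ and $\hs{2}{\Omega}$), using that $\sh\in(\tfrac12,1)$ keeps the boundary condition of order zero throughout the range $\nu\in[\sh,2]$ and avoids the exceptional half-integer index.
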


Next, we consider the discrete version of \Cref{prob:nonConstCoeff}.
\begin{problem}\label{prob:nonconstCoeffDis}
	Seek $u_h\in V_h$ such that $\bil{u_h}{v_h} = \lin{v_h}$ holds for all $v_h\in V_h$.
\end{problem}

Well-posedness of \Cref{prob:nonconstCoeffDis} is inherited from \Cref{thm:uniqueNonConst} (provided that \Cref{as:dParam} holds) with the coercivity constant $\alpha_{s,d}$. Moreover, we obtain the following standard a priori error bound.

\begin{theorem}\label{thm:FEMerror}
	Let \Cref{as:dParam} hold, then
	\begin{align*}
		\norm{\hst{\sh}{\Omega}}{u-u_h} \leq C h^{\beta^*}\norm{\lp{2}{\Omega}}{f} 
	\end{align*}
	for any $\beta^* \in [0, \frac{s}{2}-\frac{1}{2})$ and some positive $C>0$.
\end{theorem}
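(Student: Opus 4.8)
The plan is to obtain this estimate as a textbook Galerkin a priori bound: Céa's lemma to reduce the error to a best-approximation error, the regularity theorem \Cref{thm:reg} to know how smooth $u$ is, and the interpolation estimate \Cref{lem:FEMUh} to convert that smoothness into a rate in $h$. The only genuinely non-routine ingredient is promoting the \emph{qualitative} membership statement of \Cref{thm:reg} into a \emph{quantitative} regularity bound in terms of $\norm{\lp{2}{\Omega}}{f}$.

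First I would record Céa's lemma. Since \Cref{lem:nonConstCont} gives continuity of $\bil{\cdot}{\cdot}$ with constant $C_{d,r}$ and \Cref{thm:uniqueNonConst} gives coercivity with constant $\alpha_{s,d}$ under \Cref{as:dParam}, both \Cref{prob:nonConstCoeff} and its conforming discretization \Cref{prob:nonconstCoeffDis} are well-posed on $V_h\subset\hst{\sh}{\Omega}$. Galerkin orthogonality $\bil{u-u_h}{v_h}=0$ for all $v_h\in V_h$, together with coercivity and continuity, then yields in the standard way
\begin{align*}
  \norm{\hst{\sh}{\Omega}}{u-u_h}
  \le \frac{C_{d,r}}{\alpha_{s,d}}\,\inf_{v_h\in V_h}\norm{\hst{\sh}{\Omega}}{u-v_h}.
\end{align*}
Next, fixing $\beta^*\in[0,\tfrac{s}{2}-\tfrac12)$ and setting $\nu:=\sh+\beta^*$, one checks $\nu\in[\sh,s-\tfrac12)\subset[\sh,2]$ because $s\in(1,2)$ forces $s-\tfrac12<\tfrac32$. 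By \Cref{thm:reg} the solution satisfies $u\in\hst{\nu}{\Omega}\hookrightarrow\hs{\nu}{\Omega}\cap\hst{\sh}{\Omega}$, so \Cref{lem:FEMUh} applies and gives
\begin{align*}
  \inf_{v_h\in V_h}\norm{\hst{\sh}{\Omega}}{u-v_h}
  \le c\,h^{\nu-\sh}\norm{\hs{\nu}{\Omega}}{u}
  = c\,h^{\beta^*}\norm{\hs{\nu}{\Omega}}{u}.
\end{align*}

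The remaining and only delicate step is to bound $\norm{\hs{\nu}{\Omega}}{u}\le C'\norm{\lp{2}{\Omega}}{f}$. For $r\equiv 0$ this is read off from the closed form \cref{eq:strongSolution}: the mapping properties of the fractional integral operators (\Cref{lem:IntSg,lem:IntCont}) bound $\norm{\hst{\nu}{\Omega}}{g}$ and the scalar $(\cI_d^s f)(1)$ linearly in $\norm{\lp{2}{\Omega}}{f}$, while $p$ is a fixed function lying in $\hst{\nu}{\Omega}$ exactly for $\nu<s-\tfrac12$; this is where the constraint $\beta^*<\tfrac{s}{2}-\tfrac12$ originates, as $C'$ blows up as $\nu\uparrow s-\tfrac12$. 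For $r\neq 0$, where the solution formula is unavailable, I would rewrite the equation as $\flderd{d}{s}{u}=f-r\,u$, observe that $f-r\,u\in\lp{2}{\Omega}$ with $\norm{\lp{2}{\Omega}}{f-r\,u}\le\norm{\lp{2}{\Omega}}{f}+\norm{\lp{\infty}{\Omega}}{r}\norm{\lp{2}{\Omega}}{u}$, apply the $r\equiv0$ regularity bound to the right-hand side $f-r\,u$, and control $\norm{\lp{2}{\Omega}}{u}$ via the stability estimate \cref{eq:contDepNon}. This bootstrap returns the same type of bound with a modified constant.

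Chaining the three displays yields the claim with $C=\tfrac{C_{d,r}}{\alpha_{s,d}}\,c\,C'$, and the admissible range $\beta^*\in[0,\tfrac{s}{2}-\tfrac12)$ is precisely the set on which the regularity constant $C'$ stays finite. I expect the main obstacle to be the quantitative regularity estimate, specifically the $r\neq0$ case, which must be handled by the bootstrap argument above rather than by the explicit representation \cref{eq:strongSolution}.
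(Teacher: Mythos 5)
Your proposal is correct and takes essentially the same route as the paper: quasi-optimality (C\'ea) with constant $C_{d,r}/\alpha_{s,d}$ from \Cref{lem:nonConstCont} and \Cref{thm:uniqueNonConst}, the approximation estimate of \Cref{lem:FEMUh} with $\nu=\sh+\beta^*$, and the regularity of \Cref{thm:reg} to arrive at $\norm{\lp{2}{\Omega}}{f}$. The only difference is that you spell out the quantitative regularity bound $\norm{\hs{\nu}{\Omega}}{u}\le C'\norm{\lp{2}{\Omega}}{f}$ (via the representation \cref{eq:strongSolution} for $r\equiv 0$ and a bootstrap using \cref{eq:contDepNon} for $r\neq 0$), a step the paper subsumes in its citation of \Cref{thm:reg}, so your write-up is a legitimately more detailed version of the same argument.
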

\begin{proof}
	  By \cite[Thm.\ 9.42]{arendt2023partial} we obtain the well-known quasi-optimality, namely  
	  $
		\norm{\hst{\sh}{\Omega}}{u_h-u} 
		\leq \tfrac{C_{d,r}}{\alpha_{s,d}}\inf\limits_{v_h\in V_h}\norm{\hst{\sh}{\Omega}}{u-v_h}
		\leq c\,\tfrac{C_{d,r}}{\alpha_{s,d}}\, h^{\nu-\sh}\,\norm{\hs{\nu}{\Omega}}{u}$  
	  with $C_{d,r}$ from \Cref{lem:nonConstCont} and $c>0$ from \Cref{lem:FEMUh}. Then, \Cref{thm:reg} yields the estimate $\norm{\hst{\sh}{\Omega}}{u-u_h} \leq Ch^{\nu-\sh}\norm{\lp{2}{\Omega}}{f}$ for $\nu\in[\sh,s-\frac{1}{2})$ with $C>0$ being a combination of the previous constants with \Cref{thm:reg}. Considering $\beta^*:=\nu-\sh$ yields the desired result. 
\end{proof}

\begin{remark}\label{rmk:convL2}
	As in \cite[Thm.\ 5.3]{jin}, the consideration of an adjoint problem yields the estimate
	\begin{align*}
		\norm{\lp{2}{\Omega}}{u-u_h} + h^{\beta^*}\norm{\hst{\sh}{\Omega}}{u-u_h}\leq C h^{2\beta^*}\norm{\lp{2}{\Omega}}{f}, 
	\end{align*}
	for any $\beta^* \in [0, \frac{s}{2}-\frac{1}{2})$ and some positive $C>0$. 
\end{remark}

\subsection{The Algebraic System}
%-------------------------------------------------------------------------------------------------------
As usual, a FE discretization is realized in terms of a basis, i.e., $V_h=\spann\{ \varphi_i:\, i=1,...,N_h-1\}$, e.g.\ piecewise linear functions.  Then, \Cref{prob:nonconstCoeffDis} amounts solving a linear system of equations $\bA_h \bu_h = \bbf_h$, where $\bA_h = ( a(\varphi_i,\varphi_j) )_{i,j=1,...,N_h-1}\in\IR^{(N_h-1)\times (N_h-1)}$ is the stiffness matrix, the vector $\bbf_h = ( F(\varphi_j) )_{j=1,...,N_h-1}\in\IR^{N_h-1}$ the right-hand side and $\bu_h\in\IR^{N_h-1}$ is the unknown vector of expansion coefficients of $u_h\in V_h$ in terms of the FE basis.

Due to the non-local nature of the fractional integration operator, it is clear that $\bA_h$ is non-symmetric and densely populated. Without any further compression, the setup of $\bA_h$ and any matrix-vector multiplication e.g.\ used within a GMRES iteration requires $\cO(N_h^2)$ operations. Moreover, the number of GMRES iterations is known to depend on the condition number $\kappa(\bA_h)$, which we are going to analyze next.

To this end, recall the maximum and minimum singular values
\begin{align*}
	\sigma_{h, \text{max}} := \kern-6pt\sup\limits_{\bu_h\in\IR^{N_h-1}}\sup\limits_{\bv_h\in\IR^{N_h-1}} \varrho(\bA_h,\bu_h,\bv_h),
	\,\,
	\sigma_{h, \text{min}} := \kern-6pt\inf\limits_{\bu_h\in\IR^{N_h-1}}\sup\limits_{\bv_h\in\IR^{N_h-1}} \varrho(\bA_h,\bu_h,\bv_h),
\end{align*}
where $\varrho(\bA_h,\bu_h,\bv_h):= \frac{\bv_h^\top \bA_h \bv_h}{\norm{2}{\bu_h}\norm{2}{\bv_h}}$ denotes the Rayleigh quotient.

\begin{proposition}\label{prop:conditioning}
	Let \Cref{as:param,as:dParam} hold and let $V_h$ be spanned by piecewise linear FEs. Then, there exists a constant $c>$ independent of the mesh-size $h$ such 
	\begin{align*}
		\kappa(\bA_h)\leq c\, \frac{h^{-s}}{\alpha_{s,d}}.
	\end{align*}
\end{proposition}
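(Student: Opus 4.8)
The plan is to read the extreme singular values off the bilinear form and bound them separately, using continuity (\Cref{lem:nonConstCont}) from above and coercivity (\Cref{thm:uniqueNonConst}) from below, combined with two standard facts about piecewise linear finite elements on a quasi-uniform mesh. Writing $u_h=\sum_i(\bu_h)_i\varphi_i\in V_h$ for the FE function associated with a coefficient vector $\bu_h$, the quadratic form satisfies $\bv_h^\top\bA_h\bu_h = a(v_h,u_h)$, so that $\sigma_{h,\text{max}}$ and $\sigma_{h,\text{min}}$ are exactly the largest and smallest singular values of $\bA_h$ and $\kappa(\bA_h)=\sigma_{h,\text{max}}/\sigma_{h,\text{min}}$. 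The two finite-element ingredients I will use are (i) the spectral equivalence of the mass matrix on a uniform mesh, $c_1 h\,\norm{2}{\bu_h}^2 \le \norm{\lp{2}{\Omega}}{u_h}^2 \le c_2 h\,\norm{2}{\bu_h}^2$, and (ii) the inverse inequality $\norm{\hst{\sh}{\Omega}}{u_h} \le c_{\text{inv}}\, h^{-\sh}\,\norm{\lp{2}{\Omega}}{u_h}$ for all $u_h\in V_h$. Here (ii) is the only genuinely fractional-order ingredient: since $s\in(1,2)$ gives $\sh\in(\tfrac12,1)$, the exponent $\sh$ avoids the half-integers, so $\hst{\sh}{\Omega}$ and $\hso{\sh}{\Omega}$ carry equivalent norms (as recalled in \Cref{sec:fpde}) and the standard inverse estimate applies.

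For the upper bound, I would invoke continuity from \Cref{lem:nonConstCont} and then (i)--(ii):
\begin{align*}
\sigma_{h,\text{max}}
= \sup_{\bu_h}\sup_{\bv_h}\frac{a(v_h,u_h)}{\norm{2}{\bu_h}\norm{2}{\bv_h}}
\le C_{d,r}\Big(\sup_{\bu_h}\frac{\norm{\hst{\sh}{\Omega}}{u_h}}{\norm{2}{\bu_h}}\Big)^2,
\end{align*}
where combining (ii) with the upper bound in (i) gives $\norm{\hst{\sh}{\Omega}}{u_h}\le c_{\text{inv}}\,h^{-\sh}\sqrt{c_2 h}\,\norm{2}{\bu_h}$, hence $\sigma_{h,\text{max}}\le C_{d,r}\,c_{\text{inv}}^2\,c_2\,h^{1-s}$. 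For the lower bound, I would test with $\bv_h=\bu_h$, apply coercivity, and use $\norm{\hst{\sh}{\Omega}}{u_h}\ge\norm{\lp{2}{\Omega}}{u_h}$ together with the lower bound in (i):
\begin{align*}
\sigma_{h,\text{min}}
= \inf_{\bu_h}\sup_{\bv_h}\frac{a(v_h,u_h)}{\norm{2}{\bu_h}\norm{2}{\bv_h}}
\ge \inf_{\bu_h}\frac{a(u_h,u_h)}{\norm{2}{\bu_h}^2}
\ge \alpha_{s,d}\,\inf_{\bu_h}\frac{\norm{\lp{2}{\Omega}}{u_h}^2}{\norm{2}{\bu_h}^2}
\ge \alpha_{s,d}\,c_1\,h.
\end{align*}
Dividing the two estimates yields $\kappa(\bA_h)\le \tfrac{C_{d,r}c_{\text{inv}}^2 c_2}{c_1}\,\tfrac{h^{-s}}{\alpha_{s,d}}$, i.e.\ the claim with $c:=C_{d,r}c_{\text{inv}}^2 c_2/c_1$ independent of $h$.

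The main obstacle is establishing the fractional inverse inequality (ii) with the correct power $h^{-\sh}$; everything else is a direct assembly of the already-proven continuity and coercivity with the routine mass-matrix scaling. I expect to justify (ii) either by operator interpolation between the classical $\lp{2}{\Omega}$-to-$\hso{1}{\Omega}$ inverse estimate valid on quasi-uniform meshes, or by citing a known fractional inverse estimate, after reducing $\norm{\hst{\sh}{\Omega}}{\cdot}$ to the equivalent $\hs{\sh}{\Omega}$-norm on $V_h$. It is worth recording that the coercivity and continuity constants enter only through the $h$-independent prefactor $c$ and through $\alpha_{s,d}$, so the stated dependence $h^{-s}/\alpha_{s,d}$ is reproduced exactly, and that the non-symmetry of $a(\cdot,\cdot)$ is harmless since singular values of $\bA_h$ and $\bA_h^\top$ coincide and both continuity and coercivity are insensitive to the order of the arguments.
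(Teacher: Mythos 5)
Your proposal is correct and follows essentially the same route as the paper's proof: both bound $\sigma_{h,\text{max}}$ via the continuity constant $C_{d,r}$ from \Cref{lem:nonConstCont} combined with the mass-matrix scaling $\sqrt{h}\,\norm{2}{\bu_h}\sim\norm{\lp{2}{\Omega}}{u_h}$ and a Bernstein-type inverse inequality $\norm{\hst{\sh}{\Omega}}{u_h}\lesssim h^{-\sh}\norm{\lp{2}{\Omega}}{u_h}$ (the paper cites \cite[La.\ 10.5, Thm.\ 10.7]{steinbach2007numerical} for exactly the two facts you label (i) and (ii)), and both bound $\sigma_{h,\text{min}}$ from below by restricting the inf-sup to $\bv_h=\bu_h$ and applying the coercivity constant $\alpha_{s,d}$ from \Cref{thm:uniqueNonConst} together with $\norm{\hst{\sh}{\Omega}}{u_h}\ge\norm{\lp{2}{\Omega}}{u_h}$. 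Your concluding constant $c=C_{d,r}c_{\text{inv}}^2c_2/c_1$ and the resulting bound $\kappa(\bA_h)\le c\,h^{-s}/\alpha_{s,d}$ match the paper's estimate exactly.
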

\begin{proof}
	Let $u_h$, $v_h\in V_h$ denote arbitrary elements of the discrete space and denote the vectors associated with their FE representation as ${\bu}_h$ and ${\bv}_h\in\IR^{N_h-1}$, so that $a(u_h,v_h)=\bv_h^\top \bA_h \bu_h$. Then, $\sqrt{h} \| \bu_h\|_2 \sim \norm{L_2(\Omega)}{u_h}$ independent of $h$ for $u_h\in V_h$, where $\|\cdot\|_2$ denotes the standard Euclidean vector norm. Then, \Cref{thm:uniqueNonConst} yields 
\begin{align*}
	\sigma_{h,\text{min}}
		&= \inf\limits_{\bu_h\in\IR^n}\sup\limits_{\bv_h\in\IR^n} \frac{\bv_h^\top \bA_h\bu_h}{\norm{2}{\bu_h}\norm{2}{\bv_h}}
		\geq\inf\limits_{\bu_h\in\IR^n} \frac{\bu_h^\top \bA_h\bu_h}{\norm{2}{\bu_h}^2}\\
		&\gtrsim h \inf\limits_{u_h\in V_h} \frac{a(u_h,u_h)}{\norm{L_2(\Omega)}{u_h}^2}
		\gtrsim \alpha_{s,d}\, h \inf\limits_{u_h\in V_h} \frac{\norm{\hst{\sh}{\Omega}}{u_h}^2}{\norm{L_2(\Omega)}{u_h}^2}
		\gtrsim  \alpha_{s,d}\, h,
	\end{align*}
where we used e.g.\ \cite[La.\ 10.5]{steinbach2007numerical} and the involved constant is independent on the mesh-size. Next, proceeding analogously as before and employing \Cref{lem:nonConstCont}, we find that
	\begin{align*}
	\sigma_{h, \text{max}}
		&= \sup\limits_{\bu_h\in\IR^n}\sup\limits_{\bv_h\in\IR^n} \frac{\bv_h^\top \bA_h\bu_h}{\norm{2}{\bu_h}\norm{2}{\bv_h}}
		\lesssim h \, \sup\limits_{u_h\in V_h}\sup\limits_{v_h\in V_h} \frac{a(u_h, v_h)}{\norm{L_2(\Omega)}{u_h}\norm{L_2(\Omega)}{v_h}}\\
		&\lesssim h\, C_{d,r}\sup\limits_{u_h\in V_h}\sup\limits_{v_h\in V_h} \frac{\norm{\hst{\sh}{\Omega}}{u_h}\norm{\hst{\sh}{\Omega}}{v_h}}{\norm{L_2(\Omega)}{u_h}\norm{L_2(\Omega)}{v_h}}\\
		&\lesssim h\, C_{d,r}\, h^{-s}\sup\limits_{u_h\in V_h}\sup\limits_{v_h\in V_h} \frac{\norm{\lp{2}{\Omega}}{u_h}\norm{\lp{2}{\Omega}}{v_h}}{\norm{L_2(\Omega)}{u_h}\norm{L_2(\Omega)}{v_h}} 
		\lesssim  C_{d,r}\, h^{1-s},
	\end{align*}
    where we have used a Bernstein-type inequality, see e.g.\  \cite[La.\ 10.5, Thm.\ 10.7]{steinbach2007numerical}. The claim follows from the combination of the two previous results.
\end{proof}

%-------------------------------------------------------------------------------------------------------
\section{The Reduced Basis Method (RBM)}
\label{sec:rb}
%-------------------------------------------------------------------------------------------------------
In this section, we show how to apply the RBM for the above example of a \emph{parameterized} fractional PDE (PFPDE). We built upon standard references on the RBM such as \cite{Haasdonk:RB,Rozza:RB,Quarteroni:RB,Urban:RB}, refer the reader to these and omit further references as most of what is said is rather standard.

\subsection{A Parametrized Fractional PDE (PFPDE)} %Sort of an introduction
\label{ssec:pfpde}
%-------------------------------------------------------------------------------------------------------
The model reduction techniques can in general be used in order to find the numerical solutions of PDEs depending on parameters (PPDEs). These parameters are collected in a vector $\mu \in \IP$, with compact set $\IP \subset \mathbb{R}^P$, $P\in\IN$. Here, we consider a parametric version of \eqref{eq:PDE} for $\mu\in\IP$ defined via the parameter-dependent differential operator 
\begin{align*}
	\cA(\mu)u := -\flder{\sh}{\left(d(\mu)\, \flder{\sh}{u}\right)} + r(\mu)\, u,
\end{align*}
and right-hand side $f(\mu)$, where 
\begin{align*}
	c(\mu)= \sum_{q=1}^{Q^c} \vartheta_q^c(\mu)\, c_q,
	\qquad
	c\in\{ d,r,f\},
\end{align*}
$\vartheta_q^d,\ \vartheta_q^r,\ \vartheta_q^f:\IP\to\IR$, $Q^d,\ Q^r,\ Q^f\in\IN$ and $d_q,\ r_q\in\lp{\infty}{\Omega}$ as well as $f_q\in \lp{2}{\Omega}$ are given. The above representation of $d(\mu)$, $r(\mu)$ and $f(\mu)$ is known as \emph{affine decomposition}.  If such a representation is not given, one may construct an approximation by means of the \emph{Empirical Interpolation Method (EIM)}, \cite{EIM}. Then, we can write the (classical form of the) parametrized problem as
\begin{align}
	\cA(\mu)\, u_\mu(x) =f(x;\mu),\,\, x\in\Omega=(0,1), 
	\qquad
	u_\mu(0)=0, \, u_\mu(1)=0. 
	\label{eq:pPDE}
\end{align}
We shall use the above derived variational formulation of \eqref{eq:pPDE}: given $\mu \in \IP$, we seek $u_\mu \in V$ such that
\begin{equation}\label{eq:VarForm}
	a(u_\mu, v; \mu)= f(v;{\mu})  \qquad  \text{for all} \quad v \in V,
\end{equation}
where the parametric bilinear form $a: V\times V\times \IP\to\IR$ is defined as 
\begin{align*}
	a(u, v; \mu)
	&:= -\pr{\lp{2}{\Omega}}{d(\mu)\, \flder{\sh}{u}}{\frder{\sh}{v}} + \pr{\lp{2}{\Omega}}{r(\mu)\, u}{v}
\end{align*}
for $u,v \in V$ and the parametric linear form $f:H\times\IP \to V$, $H:=\lp{2}{\Omega}$, is given by $f(v;\mu)= \pr{H}{f(\mu;\cdot)}{v}$ for $v\in V\subset H$. 

Recall, that the well-posedness of the variational problem relies on \Cref{as:dParam}, see \Cref{thm:uniqueNonConst}.  By adding the parameter-dependency to the diffusion coefficient $d$ in  \Cref{as:dParam}, we obtain the parameter-dependent number $\gamma_{s,d}(\mu):= \average{d(\mu)}\,\abs{\cos\left(s\tfrac{\pi}{2}\right)} - \range{d(\mu)}$ and the condition \eqref{eq:Ass-d} then reads 
\begin{align}\label{eq:Ass-d-mu}
	\gamma_{s,d}(\mu) \tfrac{\Gamma(s/2+1)^2}{4} + \underline{r}(\mu) \ge 0. 
\end{align}
Correspondingly, the obtain a parameter-dependent coercivity constant, namely 
\begin{align}\label{eq:coer-mu}
	\alpha_{s,d}(\mu) = \gamma_{s,d}(\mu) \tfrac{\Gamma(s/2+1)^4}{8},
\end{align}
or the $r$-dependent variant
\begin{align}\label{eq:tildecoer-mu}
	\tilde\alpha_{s,d,r}(\mu) = \gamma_{s,d}(\mu) \tfrac{\Gamma(s/2+1)^2}{2} + \tfrac12\, \min\{ \underline{r}(\mu),0\}.
\end{align}

\subsection{A Detailed Discretization} %Sort of an introduction
%-------------------------------------------------------------------------------------------------------
The next standard ingredient for the RBM is a sufficiently detailed discretization in the sense that the numerical approximation is \enquote{indistinguishable} from the exact solution -- upon discretization errors, of course. This is why such an approximation is also called the \enquote{truth} solution, which is realized by a finite-dimensional subset $V^N\subset V$ of dimension $N\gg 1$, which is typically large. Then, the truth solution $u^N_\mu\in V^N$ is determined by solving the Galerkin problem
\begin{equation}\label{eq:truth}
	a(u^N_\mu, v^N; \mu)= f(v^N;{\mu})  \quad  \text{for all } \, v^N \in V^N,
\end{equation}
which is well-posed as long as the original problem \eqref{eq:VarForm} is so. The following error / residual-relation is well-known
\begin{align}\label{eq:error-residual}
  \norm{V}{u_\mu - u^N_\mu}
	\le \frac1{\alpha_{s,d}(\mu)}
		\sup_{v\in V}\frac{f(v;\mu) - a(u^N_\mu,v;\mu)}{\norm{V}{v}}
	=: \frac{\norm{V'}{\varrho^N_\mu}}{\alpha_{s,d}(\mu)},
\end{align}
where the residual is defined as usual, i.e., 
\begin{align*}
	\varrho^N_\mu(v):= f(v;\mu) - a(u^N_\mu,v;\mu)
	\quad
	\text{for }\, v\in V.
\end{align*}

\subsection{Greedy Reduced Basis Selection}
%-------------------------------------------------------------------------------------------------------
Finally, one determines a reduced space by selecting certain sample values $\mu^{(i)}$, $i=1,...,n$, in an offline training phase and computing the truth \enquote{snapshots}, i.e.,
\begin{align*}
	\xi_i := u^N_{\mu^{(i)}},\quad i=1,...,n\ll N.
\end{align*}
Once, these snapshots are determined, the reduced basis approximation for some given $\mu\in\IP$ is the Galerkin projection $u_n(\mu)$ of $u^N_\mu$ onto the reduced space $V_n:=\spann\{ \xi_1,...,\xi_n\}$, i.e.,
\begin{equation}\label{eq:RM}
  a(u_n(\mu), v_n; \mu)= f(v_n;{\mu})  \qquad  \text{for all} \quad v_n \in V_n,
\end{equation}
and we obtain a similar error estimate as in \eqref{eq:error-residual}
\begin{align}\label{eq:RB-error-residual}
  \norm{V}{u^N_\mu - u_n(\mu)}
	\le \frac{\norm{(V^N)'}{\varrho_n(\mu)}}{\alpha_{s,d}(\mu)}
	=: \Delta_n(\mu),
\end{align}
with the RB residual defined by $\varrho_n(v^N;\mu):= f(v^N;\mu) - a(u_n(\mu),v^N;\mu)$ for $v^N\in V^N$. 

The error estimator $\Delta_n(\mu)$ is maximized in the training phase over a finite-dimensional training set $\IP_{\text{train}}\subset\IP$ to determine the sample values $\mu^{(i)}$ of the parameters. it turns out that $\Delta_n(\mu)$ is computable with complexity independent of the truth dimension $N$, which is termed \enquote{online efficient}. In order to realize this, first note that the residual admits an affine decomposition, see \cite{Haasdonk:RB} for details. Then, the dual norm of the residual, i.e., $\| \varrho_n(\mu)\|_{(V^N)'}$ can be determined via the offline computation of Riesz representations as follows: given some $\varrho\in (V^N)'$, then its Riesz representation $\hat\varrho\in V^N$ is computed by solving
\begin{align}\label{eq:Riesz}
	\pr{V}{\hat\varrho}{v^N} = \varrho(v^N)
	\quad\text{for all}\quad v^N\in V^N,
\end{align}
and then $\norm{(V^N)'}{\varrho}=\norm{V^N}{\hat{\varrho}}$. Hence, we need to compute 
\begin{align*}
	\pr{V}{w}{v} = \pr{\hst{\sh}{\Omega}}{w}{v}.
\end{align*}
Owing to \Cref{Cor:NormEq}, we choose the inner product given by
\begin{align*}
	\pr{\lp{2}{\Omega}}{\flder{\sh}{w}}{\flder{\sh}{v}},
\end{align*}
inducing $\seminorm{s/2}{\cdot}$, which results in a symmetric and positive definite matrix.

\begin{remark}
	If one is not (or not only) interested to control the \emph{state} $u_\mu$, but some functions of it, a primal-dual RBM can be used following the same route as in the standard RB references quoted above. This can be used for fractional-type problems as well by using the above variational formulation of a PFPDE.
\end{remark}

\subsection{Kolmogorov $n$-Width}
\label{ssec:Kolnwith}
%--------------------------------------------------------------------
It is well-known that the error of the best possible approximation that can be achieved by the RBM is determined by the Kolmogorov $n$-width defined as 
\begin{align*}
	d_n(\cP):= \inf_{\substack{V_n\subset V\\ \dim(V_n)=n}} \sup_{\mu\in\cP} \inf_{v_n\in V_n} \| u(\mu) - v_n\|_V.
\end{align*}
Since the problem is coercive, we can immediately apply \cite[Thm.\ 3.1]{m.ohlbergers.rave2016} and conclude that
\begin{align*}
	d_n(\cP) \le C\, \exp\{ -c\, n^{1/(Q^d+Q^r)}\}
\end{align*}
for constants $C,c>0$ and $Q^d,\ Q^r\in\IN$ as in \Cref{ssec:pfpde}. The good news is that we may expect exponential decay as $n$ grows. However, typically the sizes of the involved constants are not known. Moreover, we have to expect that the rate deteriorates as $s\to 1$ since the fractional problem then becomes more and more like a transport problem, where $d_n(\cP)$ is expected to decay slowly, \cite{AGU25,m.ohlbergers.rave2016}. Hence, we are interested in making the dependence of the decay of $d_n(\cP)$ w.r.t.\ the fractional order $s$ explicit.

\begin{theorem}\label{thm:Koln}
	Let $\cA_s(\mu)u:=-\flder{s}u + \mu\, u=f$ (in variational form) for $\mu\in\cP:=[0,\mu^+]$ and some $0<\mu^+<\infty$. Then, there exists a constant $c$ and a constant $C_s$ depending on $s$ with   $C_s\stackrel{s\to 1}{\longrightarrow}\infty$, such that 
\begin{equation*}
	d_n(\cP) 
	\le C_s\, \exp\Big\{ 
		-c \, \frac{\alpha_s n}{|\cP|}
		\Big\},
\end{equation*}
		where $\alpha_s\equiv\alpha_{1,s}$ is the coercivity constant of $-\flder{s}{}$ (i.e., for $d\equiv 1$).
\end{theorem}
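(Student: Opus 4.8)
The plan is to exploit that the single parameter $\mu$ enters affinely through the reaction term, so that the solution map $\mu\mapsto u(\mu)$ is a resolvent-type expression $u(\mu)=A(\mu)^{-1}f$ that is analytic in $\mu$ with a radius of convergence controlled by $\alpha_s$. Here $A(\mu)\colon V\to V'$, $V=\hst{\sh}{\Omega}$, is the operator induced by $a(\cdot,\cdot;\mu)$, which I write as $A(\mu)=A_0+\mu M$, where $A_0$ is induced by $-\flder{s}{}$ (the $d\equiv 1$ form) and $M\colon V\to V'$ is induced by the $\lp{2}{\Omega}$-inner product. Two elementary bounds drive everything. First, since $\mu_0\ge 0$ only strengthens coercivity, $a(u,u;\mu_0)=a_0(u,u)+\mu_0\norm{\lp{2}{\Omega}}{u}^2\ge a_0(u,u)\ge\alpha_s\norm{V}{u}^2$ for every $\mu_0\in\cP$, whence $\norm{\cL(V',V)}{A(\mu_0)^{-1}}\le\alpha_s^{-1}$ uniformly. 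Second, $\norm{\cL(V,V')}{M}\le 1$, because $\abs{\pr{\lp{2}{\Omega}}{u}{v}}\le\norm{\lp{2}{\Omega}}{u}\norm{\lp{2}{\Omega}}{v}\le\norm{V}{u}\norm{V}{v}$. Consequently $\norm{\cL(V)}{A(\mu_0)^{-1}M}\le\alpha_s^{-1}$.

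Next I would expand $A(\mu)=A(\mu_0)\bigl(I+(\mu-\mu_0)A(\mu_0)^{-1}M\bigr)$ in a Neumann series around an arbitrary $\mu_0\in\cP$, obtaining $u(\mu)=\sum_{k\ge0}(\mu-\mu_0)^k u_k(\mu_0)$ with $u_k(\mu_0)=(-1)^k(A(\mu_0)^{-1}M)^kA(\mu_0)^{-1}f$, and hence, using $\norm{V'}{f}\le\norm{\lp{2}{\Omega}}{f}$, the coefficient bound $\norm{V}{u_k(\mu_0)}\le\alpha_s^{-(k+1)}\norm{\lp{2}{\Omega}}{f}$. The series therefore converges with geometric tail whenever $\abs{\mu-\mu_0}<\alpha_s$, which identifies $\alpha_s$ as a uniform lower bound for the radius of analyticity along $\cP$ and makes the $s$-dependence explicit, since $\alpha_s\to0$ as $s\to1$. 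Note that this coercivity-only route is insensitive to the non-symmetry of the fractional operator, so no spectral information is needed.

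I would then turn the local expansions into a global low-dimensional approximation by a covering argument. Partition $\cP=[0,\mu^+]$ into $m$ equal subintervals of length $\abs{\cP}/m$ with midpoints $\mu_0^{(j)}$, and choose $m$ so that $\abs{\cP}/(2m\alpha_s)\le\tfrac12$, i.e.\ $m\sim\abs{\cP}/\alpha_s$. For $\mu$ in the $j$-th subinterval the truncated Taylor polynomial of degree $K-1$ at $\mu_0^{(j)}$ satisfies, by the coefficient bound and summation of the geometric tail,
\[
\Bigl\Vert u(\mu)-\sum_{k=0}^{K-1}(\mu-\mu_0^{(j)})^k u_k(\mu_0^{(j)})\Bigr\Vert_V
\le \frac{2\,\norm{\lp{2}{\Omega}}{f}}{\alpha_s}\,2^{-K}.
\]
Taking $V_n:=\spann\{u_k(\mu_0^{(j)}):0\le k\le K-1,\ 1\le j\le m\}$, of dimension at most $n=mK$, yields $d_{mK}(\cP)\le \tfrac{2}{\alpha_s}\norm{\lp{2}{\Omega}}{f}\,2^{-K}$. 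Finally, for general $n$ I would fix $m=\lceil\abs{\cP}/\alpha_s\rceil$ and set $K=\lfloor n/m\rfloor$, so that $2^{-K}\le 2\exp\{-(\ln2)\,n/m\}$ and $1/m\gtrsim\alpha_s/\abs{\cP}$; this gives exactly $d_n(\cP)\le C_s\exp\{-c\,\alpha_s n/\abs{\cP}\}$ with $c=\ln2$ (up to rounding) and prefactor $C_s\sim\alpha_s^{-1}\norm{\lp{2}{\Omega}}{f}$, which diverges as $s\to1$ since $\alpha_s\to0$.

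The main obstacle I anticipate is bookkeeping the interplay between the radius of convergence and the number of patches: the Neumann series converges only on a ball of radius $\alpha_s$, so covering $\cP$ forces $m\sim\abs{\cP}/\alpha_s$ subintervals, and it is precisely this scaling that transfers the degeneracy $\alpha_s\to0$ simultaneously into the rate $\alpha_s n/\abs{\cP}$ in the exponent and into the blow-up of $C_s$. A secondary point to treat cleanly is the regime $\abs{\cP}/\alpha_s<1$ (so $m=1$), where one obtains the stronger rate $\exp\{-cn\}$ and must only verify consistency with the stated bound.
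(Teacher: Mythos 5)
Your proof is correct and yields the theorem with the same scaling, but via a genuinely different and more elementary route than the paper. The paper complexifies the operator, verifies invertibility of $\cA_s^\IC(\hat\mu)$ on the half-plane $\Re(\hat\mu)>-\alpha_s$, and invokes the complex Banach-space implicit function theorem (following Ohlberger--Rave and Cohen--DeVore) to obtain a holomorphic extension of the solution map; it then covers $\cP$ by $M_s(\mu^+)=\ceil{\mu^+/\alpha_s}$ disks of radius $\sim\alpha_s$, truncates the local Taylor series at order $\floor{n/M_s(\mu^+)}$, and absorbs the coefficient bounds into a constant $C_s$ extracted from holomorphy. You bypass the complexification and the implicit function theorem entirely by exploiting the affine structure $A(\mu)=A_0+\mu M$: your Neumann expansion around each midpoint \emph{is} the local power series, and it comes with the fully explicit coefficient bound $\norm{V}{u_k(\mu_0)}\le\alpha_s^{-(k+1)}\norm{\lp{2}{\Omega}}{f}$, from which the radius of convergence $\alpha_s$, the patch count $m\sim|\cP|/\alpha_s$, the geometric tail $2^{-K}$, and the explicit prefactor $C_s\sim\alpha_s^{-1}\norm{\lp{2}{\Omega}}{f}$ (which diverges as $s\to 1$, matching the stated behavior of $C_s$) all follow by direct computation; the uniform resolvent bound from $\mu_0\ge 0$ and your handling of the degenerate case $m=1$ close the remaining gaps. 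The combinatorial core --- covering an interval of length $|\cP|$ by $\sim|\cP|/\alpha_s$ patches and splitting the budget as $n=mK$ --- is identical in both arguments, so the exponents agree. What the paper's holomorphy route buys is generality: it extends (as remarked after \Cref{thm:Koln}) to operators with several affine terms $\sum_{q}\vartheta_q^d(\mu)\,D_{s,q}+\sum_{q}\vartheta_q^r(\mu)\,r_q$, where no closed-form resolvent expansion is at hand, whereas your argument is tailored to the one-dimensional affine dependence $A_0+\mu M$. That, however, is exactly the setting of the theorem, and there your version is arguably tighter: the paper's $C_s$ is only shown finite through an implicit half-radius argument, while yours is explicit and the entire proof stays in the real Hilbert-space framework.
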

\begin{proof}
We start by considering the complexification, i.e., $\cA^\IC_s(\hat\mu)\hat{u}:=-\hflder{s}{\hat{u}} + \hat\mu\, \hat{u} = \hat{f}$ for $\hat\mu = \nu +\iu \sigma$, $\hat{u} = u_1+\iu u_2$, $\hat{f} = f_1+\iu f_2$. This can rewritten as a system of the form $a_s(U,V)+ (C(\hat\mu)\, U,V)_{L_2(\Omega)^2}$, for $U=(u_1,u_2)^T$, $V=(v_1,v_2)^T$, where $a_s(U,V):= -(\flder{\sh}{u_1},\frder{\sh}{v_1})_{L_2(\Omega)}-(\flder{\sh}{u_2},\frder{\sh}{v_2})_{L_2(\Omega)}$ nad $C(\hat\mu):=\begin{pmatrix} \nu & -\sigma \\ \sigma & \nu\end{pmatrix}$. Then, $a_s(U,U) + \nu\,  (U,U)_{L_2(\Omega)^2}\ge \alpha_s \| U\|_{\hst{\sh}{\Omega}}^2  + \nu\, \| U\|_{L_2(\Omega)^2}^2$, which is positive for $\nu=\Re(\hat\mu)>-\alpha_s$. In that case, the operator $\cA^\IC_s(\hat\mu)$ is invertible.

For $V:=\hst{\sh}{\Omega}$ we define the solution map $\Phi_s:\cP\to V$ by $\Phi_s(\mu):= u_s(\mu)= \cA_s(\mu)^{-1}f$, where $u_s(\mu)$ denotes the unique solution of the variational formulation.  
Based upon the above considerations, we may even allow complex-valued parameters. Hence, we consider the complexifications  of these operators and define $\Psi_s: V^\IC\times \IC \to (V^\IC)'$ by $\Psi_s(\hat{v},\hat\mu) := \cA_s^\IC(\hat\mu)\, \hat{v} - f^\IC$, where $f^\IC\in (V^\IC)'$ denotes the complexification of $f\in V'$. Note, that we can rewrite the variational form as $\Psi_s(\Phi_s(\mu),\mu)=0$.  Recall that, the operator $\partial_v \Psi_s(v,\hat\mu) = \cA_s^\IC(\hat\mu)$ is invertible for all $\hat\mu\in\hat\cP:=\{ \hat\mu\in\IC:\, \Re(\hat\mu)>-\alpha_s\}$, which contains $\cP$. 
As in \cite{m.ohlbergers.rave2016} (and previously in \cite{a.cohenr.devore2015}) we use the  complex Banach space version of the implicit function theorem to deduce that the mapping $\hat\Phi_s:\hat\cP\to V^\IC$ defined as $\hat\Phi_s(\hat\mu):=\Phi_s(\Re(\hat\mu))$, can holomorphically be extended to an open neighborhood $\hat\cP\subset\hat\cO\subset\IC$ (which is the reason for the notation $\hat\Phi_s$ by which we shall denote the extension to $\hat\cO$, which still can be chosen appropriately).  
Now, we  cover $\hat\cP$ by circles. In fact, we choose centers $c_m:= m\, \alpha_s$, $m=1,...,M_s(\mu^+):= \Big\lceil \tfrac{\mu^+}{\alpha_s}\Big\rceil$ and fix the radius $r:=(1+\varepsilon)\alpha_s$ so that 
\begin{align*}
	\hat\cP \subset \bigcup_{m=1}^{M_s(\mu^+)} D(c_m,r)
	\qquad\text{and}\qquad
	\bigcup_{m=1}^{M_s(\mu^+)} D(c_m,2r) \subset\hat\cO,
\end{align*}
where $D(c,r):=\{ z\in\IC^2:\, |z-c|<r \}$ is the ball of radius $r$ around $c$. 
Next, holomorphy implies analyticity, so that for all $1\le m\le M_s(\mu^+)$ and all indices $\nu\in\IN_0$ there exist functions $v_{m,\nu}\in V^\IC$ such that
\begin{align*}
	\hat\Phi(z) = \sum_{\nu\in\IN_0} (z-c_m)^\nu\, v_{m,\nu}
\end{align*}
converges absolutely for all $z\in D(c_m,2r)$. Recall, that $v_{m,\nu} = \frac{\partial^\nu}{\partial z^\nu} \hat\Phi_s(c_m)$ and $\hat\Phi_s(c_m)=\Phi_s(c_m)=u_s(c_m)$ as well as $u_s(\mu)=\cA_s(\mu)^{-1}f$ and $\| \cA_s(\mu)^{-1}\| \le \alpha_s^{-1}$ by coercivity. Hence, we deduce that $v_{m,\nu}\in V$ and $\| v_{m,\nu}\| \lesssim \alpha_s^{-\nu}$. Thus, the following quantity is independent of $M_s(\mu^+)$
\begin{align*}
	C_s&:= \max_{1\le m\le M_s(\mu^+)} \sup_{z\in D(c_m,r)}
		\bigg\| \sum_{\nu\in\IN_0}  2^\nu (z-c_m)^\nu\, v_{m,\nu} \bigg\|\\
	&\le \max_{1\le m\le M_s(\mu^+)} \sum_{\nu\in\IN_0} 2^\nu 2^{-\nu} \| v_{m,\nu}\|
		<\infty.
\end{align*}
The term $2^{-\nu}$ can be put here since the supremum if taken over $z\in D(c_m,r)$, i.e., the half radius compared to the area of holomorphy which has a radius of $2r$. Moreover as $\alpha_s\to 0$ as $s\to 1$, we also see that $C_s$ grows as $s\to 1$. 
Now, set
\begin{align*}
	V_n := \text{span}\left\{ v_{m,\nu}:\, m=1,...,M_s(\mu^+), 
	\nu = 0,...,K_{n}
	:=\Big\lfloor {\textstyle\frac{n}{M_s(\mu^+)}}\Big\rfloor\right\},
\end{align*}
which is a subspace of $V$ of dimension at most $n\in\IN$. Moreover, for each $\mu\in\cP$, we find a $m\in\{ 1,...,M_s(\mu^+)\}$ such that $\mu\in D(c_m,r)$ and define the reduced approximation by the truncated series 
\begin{align*}
	\Phi_{s,n}(\mu) 
	:= \sum_{|\nu|\le K_n} (\mu-c_m)^\nu\, v_{m,\nu}\in V_n.
\end{align*}
Finally, we estimate the error
\begin{align*}
	\| \Phi_s(\mu)-\Phi_{s,n}(\mu)\|
	&= \bigg\| \sum_{\nu\ge K_{n}+1} (\mu-c_m)^\nu\, v_{m,\nu} \bigg\|
	= \bigg\| \sum_{\nu\ge K_{n}+1}  2^{-\nu} 2^{\nu} (\mu-c_m)^\nu\, v_{m,\nu} \bigg\|\\
	&\le C_s 2^{-(K_{n}+1)}
	\le C_s\, \exp\bigg\{ -\frac{n\, \ln(2)}{M_s(\mu^+)} \bigg\},
\end{align*}
which proves the claim.
\end{proof}

\Cref{thm:Koln} shows that the decay deteriorates as $s\to 1$. In fact, since the constants $C_s$ and $c_\Omega$ are independent of $\cP$, we clearly see the influence of the fractional order $s$ in the sense that it deteriorates the exponential decay rate, i.e., the convergence becomes slower and slower as $s\to 1$, i.e., $\alpha_s\to 0$ and $C_s$ grows. 

With some technical effort and corresponding assumptions, the statement of \Cref{thm:Koln} can also be extended to operators of the form 
\begin{align*}
	\cA_s(\mu) = \sum_{q=1}^{Q^d} \vartheta_q^d(\mu)\, D_{s,q} + \sum_{q=1}^{Q^r} \vartheta_q^r(\mu)\, r_q.
\end{align*}

%-------------------------------------------------------------------------------------------------------
\section{Numerical Experiments}
\label{sec:numexp}  
%-------------------------------------------------------------------------------------------------------
In this section, we present numerical experiments complementing our theoretical results presented in \Cref{sec:fem,sec:rb} by quantitative statements. All simulations were carried out using a C++ code developed by the authors \cite{GitAylwin}. The open source libraries Eigen \cite{eigenweb} and BOOST \cite{boost_math_quadrature,schaling2014boost} were used for the treatment of the linear systems and numerical integration, respectively. 

Based upon \Cref{Cor:NormEq}, we computed $\seminorm{s}{\varphi}=\norm{\lp{2}{\IR}}{\flder{s}{\varphi}}$ as a quantity being equivalent to $\norm{\hst{s}{\Omega}}{\varphi}$.

\subsection{FEM - Solutions and Convergence}
\label{ssec:femNumExp}
%-------------------------------------------------------------------------------------------------------
We begin with numerical experiments showcasing the convergence result in \Cref{thm:FEMerror}.

\subsubsection{Constant Coefficients}\label{sssec:NumExpConst}
%-------------------------------------------------------------------------------------
We begin by considering, as in \cite[\S 7.1]{jin}, \Cref{prob:nonconstCoeffDis} with $d\equiv 1$ and $r\equiv 0$, and test two different sources for which the analytical solution is known, namely
\begin{compactenum}[(Ex1)]
	\item $f(x) := 1$, for which the solution is $u(x):= \frac{1}{\Gamma(s+1)}(x^{s-1}-x^{s})$,
	\item $f(x) := x(1-x)$, yielding $u(x):= \frac{1}{\Gamma(s+2)}(x^{s-1}-x^{s+1})- \frac{1}{\Gamma(s+3)}(x^{s-1}-x^{s+2})$.
\end{compactenum}
We also consider, for each case, three different values of $s$, namely $s_1=1.8$, $s_2=1.5$ and $s_3=1.2$, for which the expected convergence rates in $\hst{\sh}{\Omega}$ read $\beta_i=\frac{s_i}{2}-\frac12$ given by \Cref{thm:FEMerror} are $\beta_1=0.4$, $\beta_2=0.25$ and $\beta_3=0.1$ (with respect to the mesh-size). The rates in $\lp{2}{\Omega}$ should be the double, i.e., $\nu_i=2\beta_i=s_i-1$, i.e., $\nu_1=0.3$, $\nu_2=0.5$ and $\nu_3=0.2$. 

\Cref{fig:femConv1,fig:femConv2} display the numerical convergence rates for the two cases of $f$. As expected, the  experimental convergence rates in $\hst{\sh}{\Omega}$ asymptotically reach the predicted theoretical rates. However, the experimental convergence rate in the $\lp{2}{\Omega}$ norm is higher than that displayed in \Cref{rmk:convL2} (also see \cite[Thm.\ 5.3]{jin}). In fact, instead of $\nu_i=2\beta_i=s_i-1$, we observe a rate of  $s_i-\frac{1}{2}$. This effect was also observed and commented in \cite[\S\S 7.1.1, 7.1.2]{jin} but (to the best of our knowledge) there is no analysis explaining this behavior. We also observe a pre-asymptotic range of faster convergence that appears to depend on the fractional order $s$ (with a longer pre-asymptotic range for larger values of $s$).
%-------------------------------------------------------
\begin{figure}[htbp]
  \centering
  \includegraphics[width=0.9\textwidth]{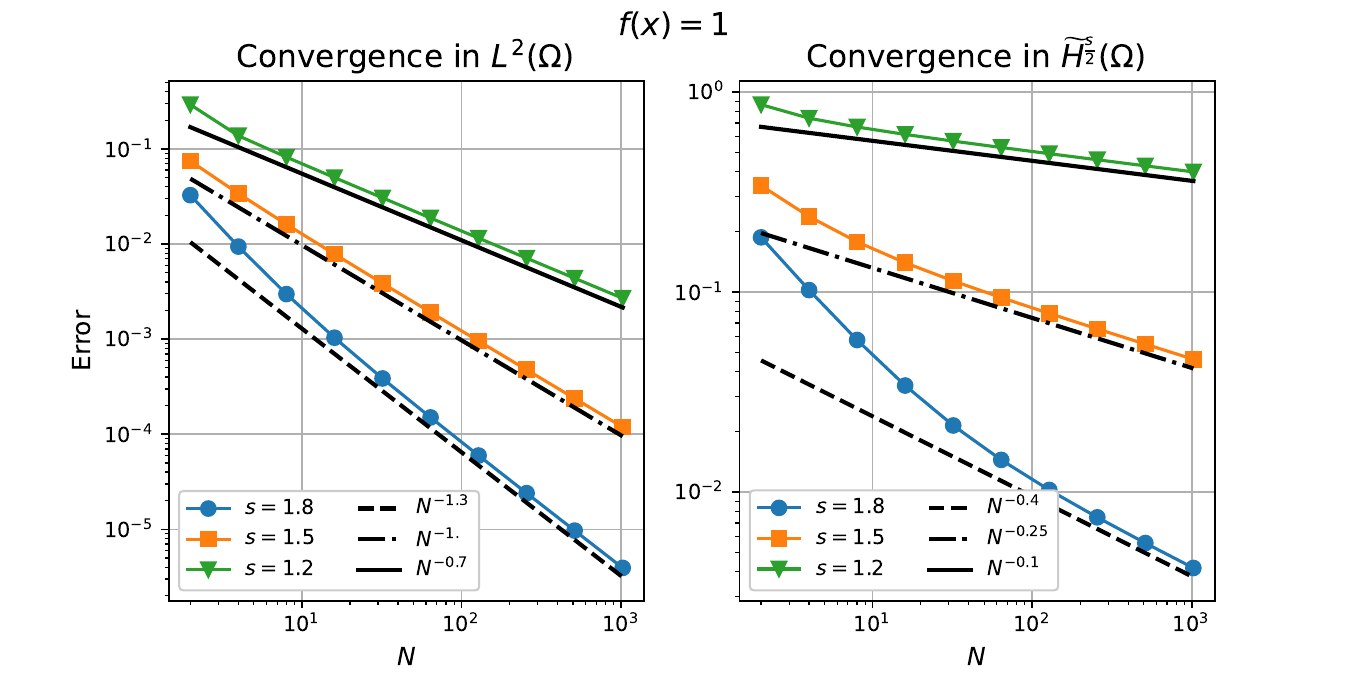}
  \caption{Convergence in the $\lp{2}{\Omega}$ and $\hst{\sh}{\Omega}$ norms to the analytical solution for the case $f(x) = 1$, (Ex1).}
  \label{fig:femConv1}
\end{figure}
%-------------------------------------------------------
\begin{figure}[htbp]
  \centering
  \includegraphics[width=0.9\textwidth]{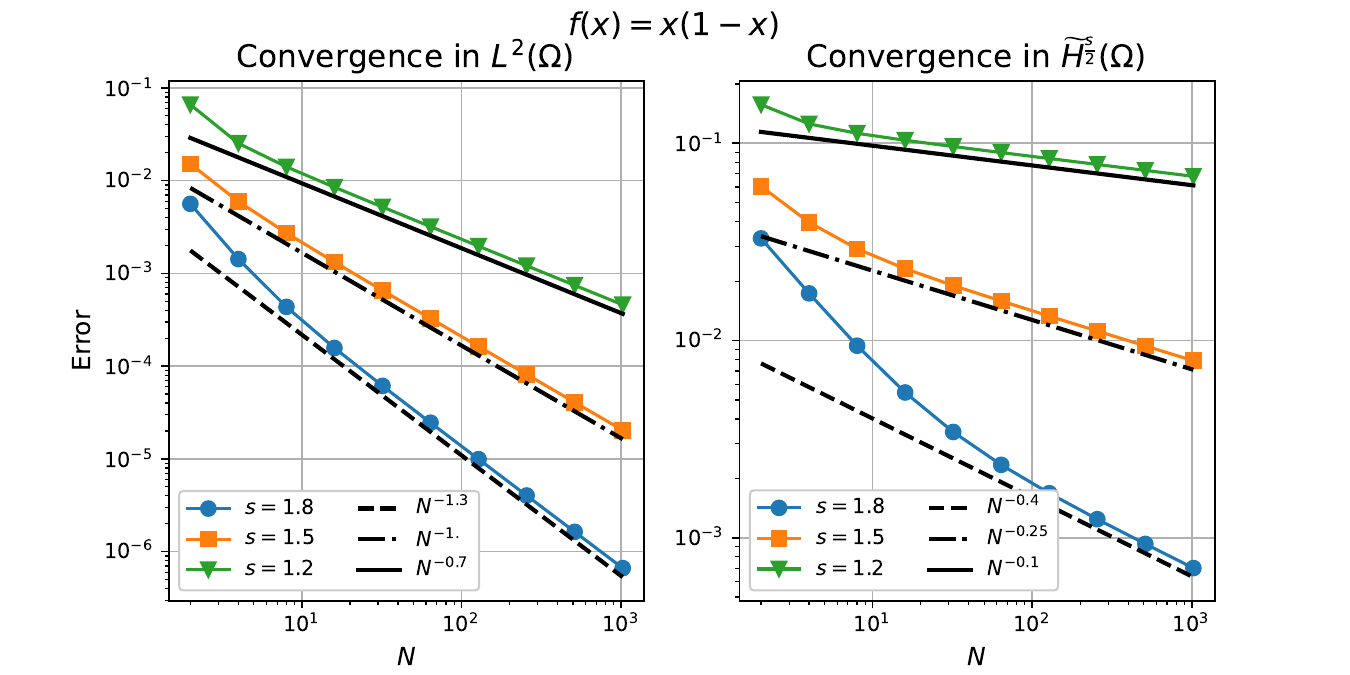}
  \caption{Convergence in the $\lp{2}{\Omega}$ and $\hst{\sh}{\Omega}$
    norms to the analytical solution for the case $f(x) = x(1-x)$, (Ex2).}
  \label{fig:femConv2}
\end{figure}
%-------------------------------------------------------

\subsubsection{Non-Constant Coefficients}
\label{sssec:NumExpNonConst}
%-------------------------------------------------------------------------------------
Next, we study the convergence of solutions to \Cref{prob:nonconstCoeffDis} with non-constant diffusion and reaction coefficients $d$ and $r$, respectively. 
We consider two different cases, namely
\begin{compactenum}[(Ex1)]
	\setcounter{enumi}{2}
	\item $d(x) := 4+\sin(2\pi x)$ and $r(x):= \cos(2\pi x)$,
	\item $d(x) := \begin{cases} 5, & \text{for }x < 0.5,\\ 3, &\text{for }x \geq 0.5,\end{cases}$ 
		\hspace*{3mm}and\hspace*{3mm} 
		$r(x):= \begin{cases} -2, &\text{for }x < 0.5,\\ 8, &\text{for }x \geq 0.5.\end{cases}$
\end{compactenum}
Apparently, (Ex3) is made of smooth coefficients, whereas (Ex4) are piecewise defined functions. With reference to \Cref{as:dParam}, we get $\average{d}=4$ and $\range{d}=1$ in both cases, so that  $\gamma_{s,d}:= 4\,\abs{\cos\left(s\tfrac{\pi}{2}\right)} - 1$. This also means that the coercivity constants are identical in both cases. However, the reaction coefficients vary, in the first example we have $\underline{r} =-1$ and $\underline{r} =-2$ in the second one. This yields the fact the first example is only coercive for $s=1.8$ and $s=1.5$, whereas the second one only for $s=1.8$, see \Cref{Tab:Ex2Constants}.
%-------------------------------
\begin{table}[!htb]
	\begin{center}
	\begin{tabular}{r|r|r|r|r}
	\multicolumn{5}{c}{(Ex3)} \\ \hline
		$s$ & $\gamma_{s,d}$ 
		& $c_{s,d,r}$ 
		& $\alpha_{s,d}$ 
		& $\tilde\alpha_{s,d,r}$ \\ \hline
		1.8 & 2.80 & 1.59 & 0.2999 & 0.7969 \\ \hline
		1.5 & 1.83 & 0.54 & 0.1631 & 0.2722\\ \hline
		1.2 & 0.24 & -0.81 & 0.0188 & -0.4058 \\ \hline
	\end{tabular}
	\hfil
	\begin{tabular}{r|r|r|r}
		\multicolumn{4}{c}{(Ex4)} \\ \hline
		$s$  & $c_{s,d,r}$ & $\alpha_{s,d}$ & $\tilde\alpha_{s,d,r}$ \\ \hline
		1.8 & 0.59 & 0.2999 & 0.2969 \\ \hline
		1.5 &  -0.46 & 0.1631 & -0.2278 \\ \hline
		1.2 &  -1.81 & 0.0188 & -0.9058 \\ \hline
	\end{tabular}
\caption{\label{Tab:Ex2Constants}Involved constants for the two examples of non-constant coefficients. Left (Ex3), right (Ex4). The values for $\gamma_{s,d}$ coincide for (Ex3) and (Ex4).}
\end{center}
\end{table}

We take the right-hand side $f(x) \equiv 1$ for the experiments in this subsection and compute the error against a numerically computed solution on a fine mesh. \Cref{fig:femSol3} shows the numerical solutions. The growth of the strength of the singularity with decreasing $s$ is apparent in both figures, while the solutions in the right graph in \Cref{fig:femSol3} display a visible dent at the point $x=0.5$, where the coefficients are discontinuous.
%-------------------------------------------------
\begin{figure}[!htbp]
  \centering
  \includegraphics[width=0.4\textwidth]{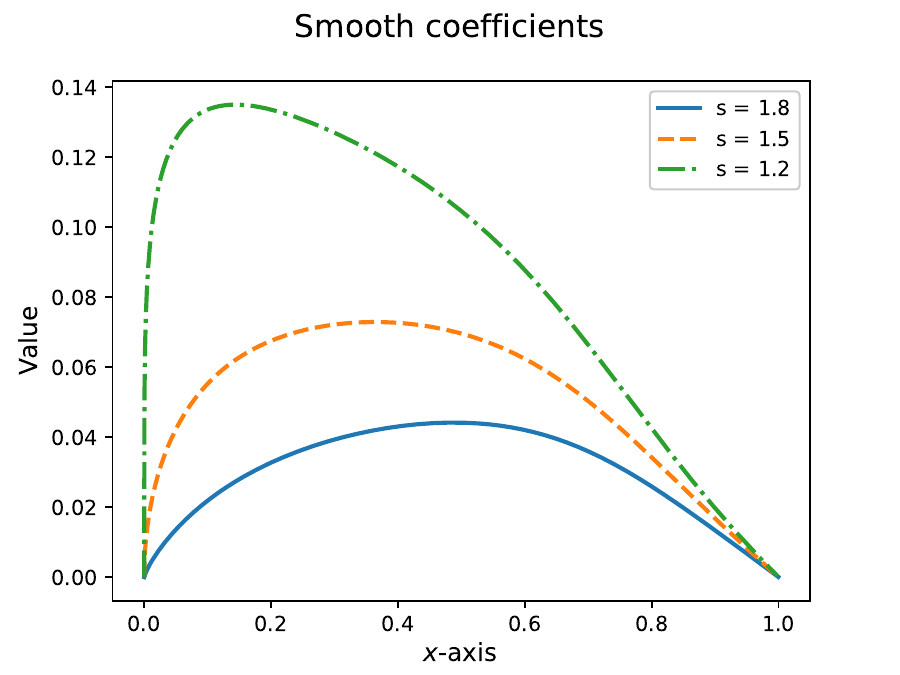}
  \includegraphics[width=0.4\textwidth]{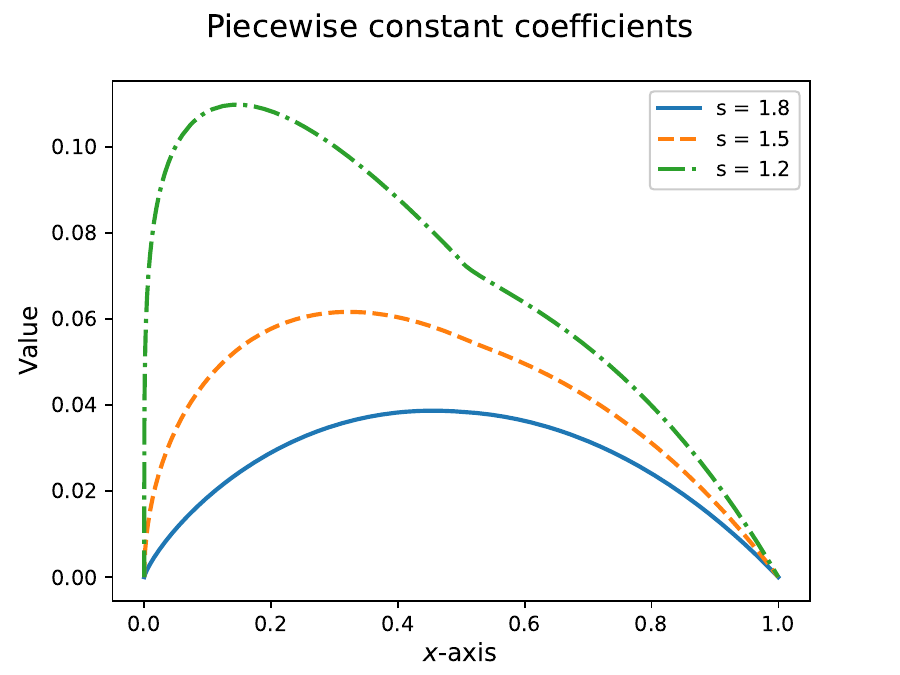}
  \caption{Numerical solutions for the case with smooth coefficients (Ex3, left) and piecewise constant coefficients (Ex4, right) for different values of $s$.}
  \label{fig:femSol3}
\end{figure}

\Cref{fig:femConv3,fig:femConv4} display the convergence of the discrete solutions obtained through our implementation to the respective numerically computed solutions. The figures show a behavior similar to the constant coefficient case in \Cref{sssec:NumExpConst}, with a pre-asymptotic range of faster convergence. The smoothness of the coefficients seems to play no role in the observed convergence rates, which are dominated by the singularity at the $x=0$ endpoint. Moreover, we obtain also convergence in those cases where the bilinear form is no longer coercive, see \Cref{Remark:infsup}.
%-----------------------------------------------
\begin{figure}[!htb]
  \centering
  \includegraphics[width=0.9\textwidth]{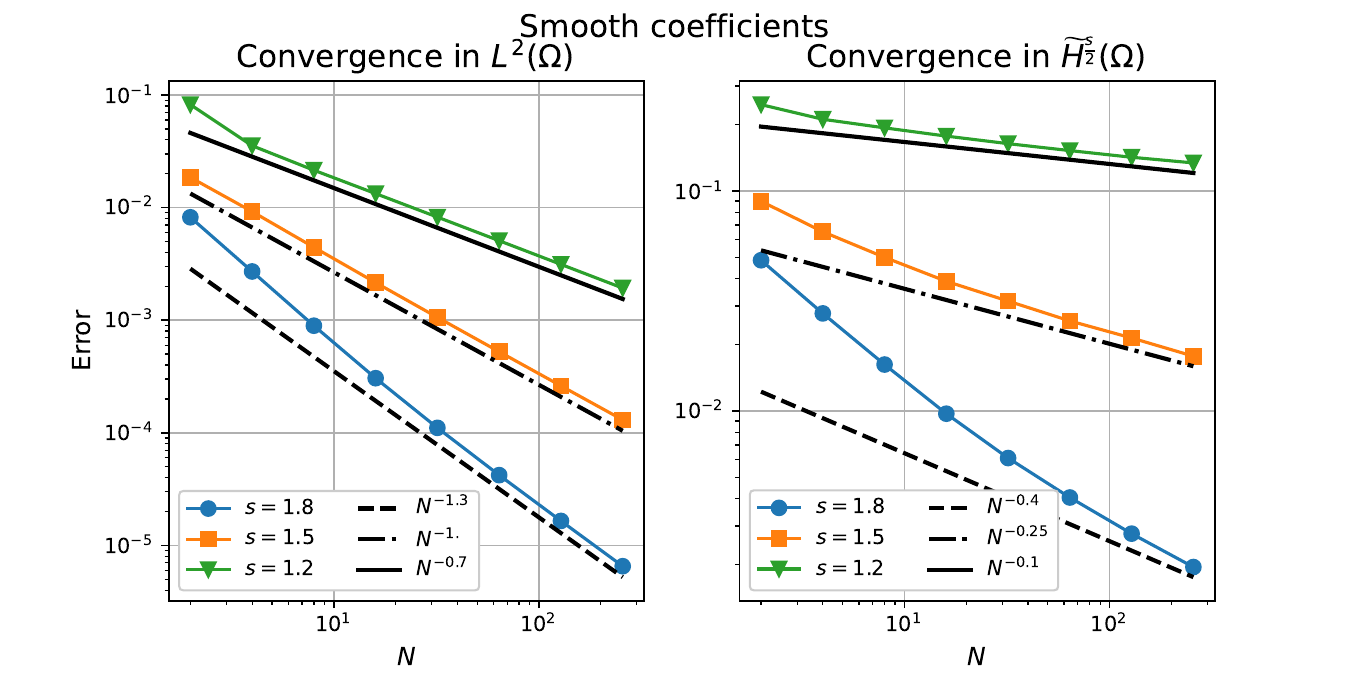}
  \caption{Convergence in the $\lp{2}{\Omega}$ and $\hst{\sh}{\Omega}$
    norms to the analytical solution for the case with smooth coefficients, (Ex3).}
  \label{fig:femConv3}
\end{figure}
%-----------------------------------------------
\begin{figure}[htbp]
  \centering
  \includegraphics[width=0.9\textwidth]{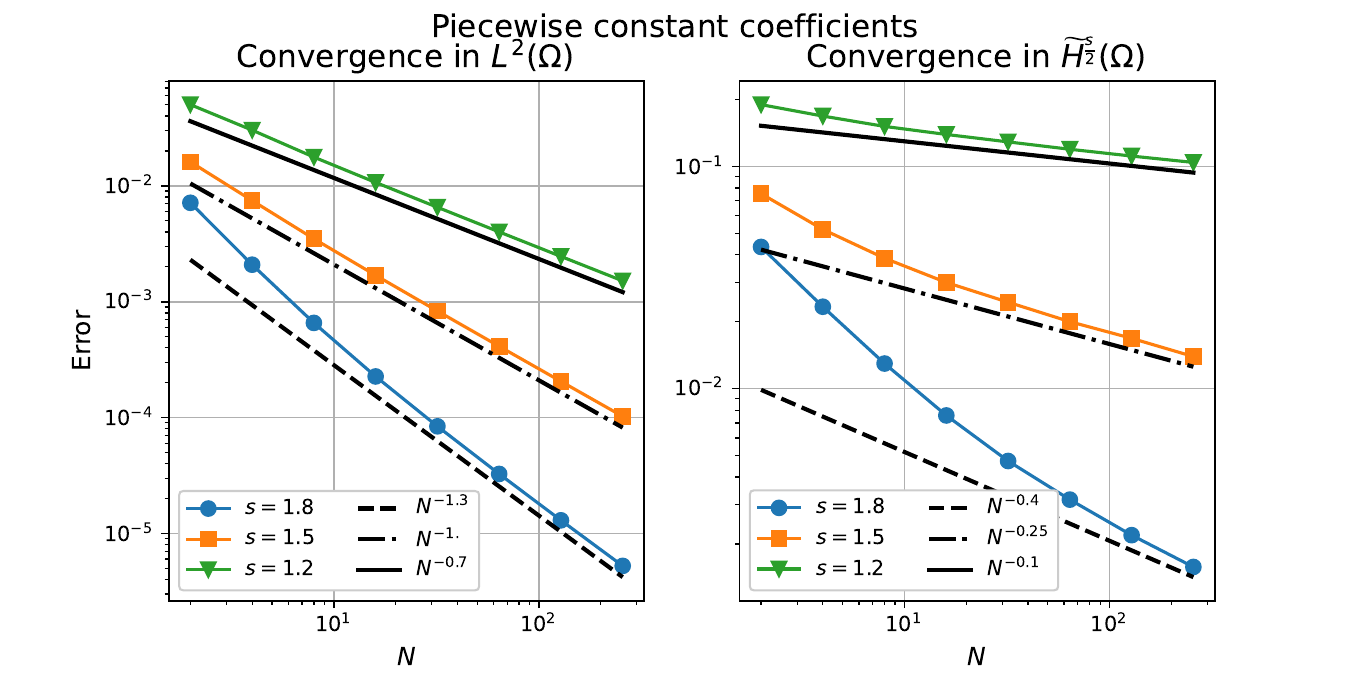}
  \caption{Convergence in the $\lp{2}{\Omega}$ and $\hst{\sh}{\Omega}$ norms to the analytical solution for the case with piecewise constant coefficients, (Ex4).}
  \label{fig:femConv4}
\end{figure}
%-----------------------------------------------
\subsection{Conditioning of the Stiffness Matrix}
% -----------------------------------------------
We continue by presenting numerical experiments concerning \Cref{prop:conditioning}. To this end, we will consider the stiffness matrices for three different problems, namely,
\begin{compactenum}
	\item $A_h^{(1)}$ from (Ex1) or (Ex2) in \Cref{sssec:NumExpConst};
	\item $A_h^{(2)}$ corresponding to (Ex3) in \Cref{sssec:NumExpNonConst};
	\item $A_h^{(3)}$ corresponding to (Ex4) in \Cref{sssec:NumExpNonConst}.
 \end{compactenum}
\Cref{fig:singVals} displays the behavior of the singular values as the number of elements of the mesh increases (i.e., $h\to 0$). The maximum singular values display the behavior $N^{s-1}$, while the minimum singular values decrease as $N^{-1}$, as the upper bound found in \Cref{prop:conditioning}. The influence of the coercivity constant is visible by the vertical offset of the curves corresponding to the same matrix.
%-----------------------------------------------
\begin{figure}[htbp]
  \centering
  \includegraphics[width=0.9\textwidth]{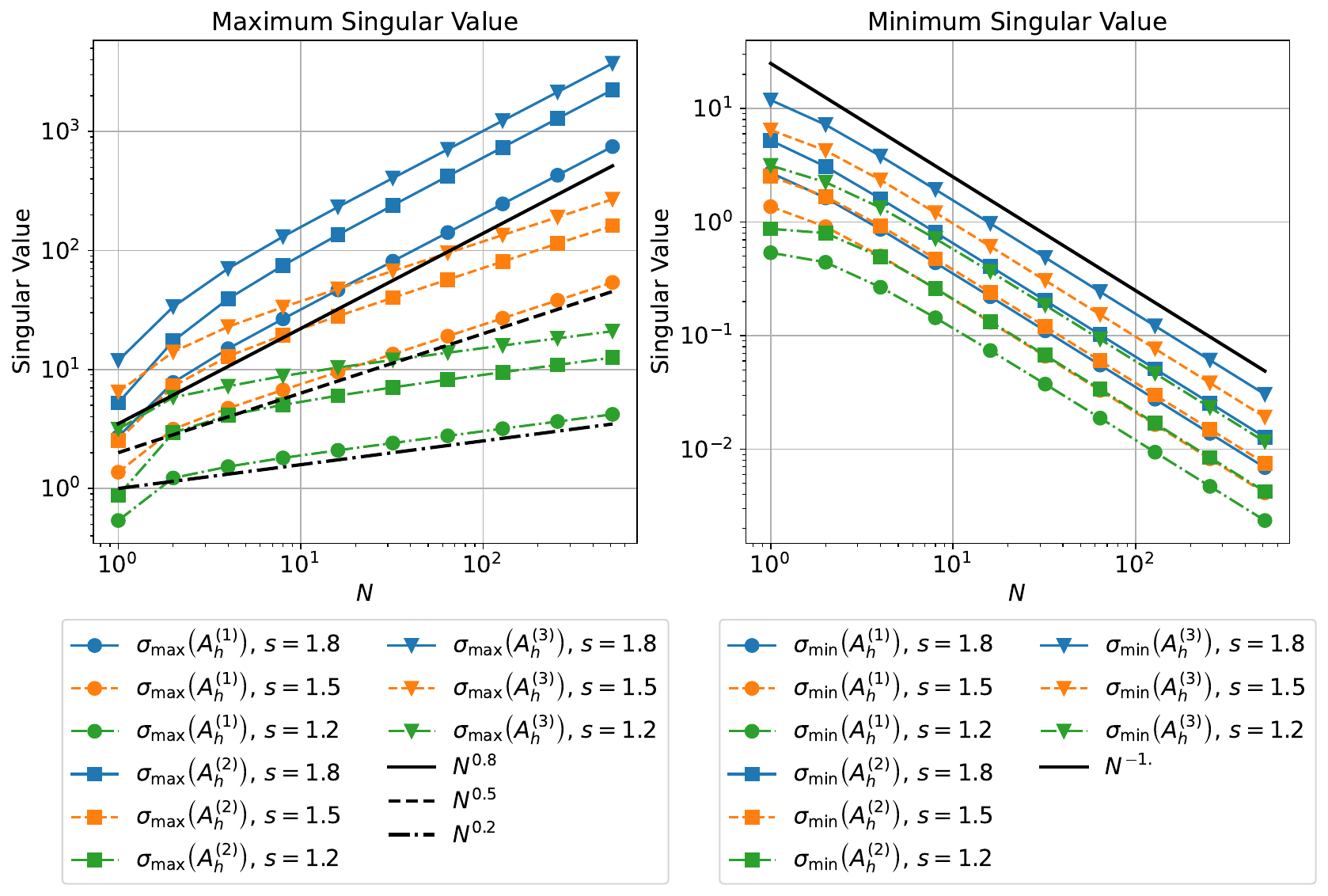}
  \caption{Maximum and minimum singular values for the different stiffness matrices and fractional orders under consideration.}
  \label{fig:singVals}
\end{figure}
%-----------------------------------------------

%-----------------------------------------------
\subsection{Reduced Basis Method}
%-----------------------------------------------
Finally, we present results of our numerical experiments concerning the RBM. Our specific interest lies in the quantitative investigation of the reduction keeping in mind that the original fractional problem is nonlocal. Hence, we are interested in the decay of the greedy scheme w.r.t.\ the size $n$ of the reduced system as well as in the speedup of the reduced system as compared to the full \enquote{truth} problem. For both issues, we aim at studying the dependence on the order $s$ of the operator.

We restrict ourselves to a parameter-independent right-hand side  $f(x):=x(1-x)$ and consider a parameter-dependent piecewise constant diffusion and convection coefficients given by
\begin{align}\label{eq:greedyCase1}
	d(x; \mu) := \begin{cases}
		1, 		& \text{if}\quad x\in[0, 0.25),\\
		\mu_1, 	& \text{if}\quad x\in[0.25, 0.5),\\
		\mu_2, 	& \text{if}\quad x\in[0.5, 0.75),\\
		\mu_3, 	& \text{if}\quad x\in[0.75, 1],
		\end{cases}
		\qquad\text{and}\quad r(x; \mu):= \mu_4 + \mu_5 x.
\end{align}
We choose $\mu\in \cP:=[0.7, 1.3]^3\times[0, 1]^2\subset\IR^5$ and use the training set as the tensor product of 10 Gauss-Legendre quadrature points in each dimension, yielding $|\IP_{\text{train}}|=10^5$ training points. Again, we choose $s=1.8$, $1.5$ and $1.2$. The \enquote{truth} approximatios are computed through the FEM on a uniform mesh with $2^9$ elements. Since we consider here only cases where $\underline{r}\geq 0$ and we employ $\seminorm{s/2}{\cdot}$ as an equivalent norm on $\hst{\sh}{\Omega}$, we use $\alpha_{s,d}$ as the associated coercivity constant (see the proof of \Cref{thm:uniqueNonConst}).

\subsubsection{Greedy Convergence}
%-------------------------------------------------------------------------------
We determine the decay of the error of the strong greedy method for two cases, namely
\begin{compactenum}
	\item constant diffusion ($d\equiv 1$), for which the bound in \Cref{thm:Koln} holds;
	\item the more general non-constant diffusion case \eqref{eq:greedyCase1}.
\end{compactenum}
In both cases, we expect rates of the form $d_n(\cP) \le C_s e^{-n\, \varrho_s}$. At least the constant $C_s$ cannot easily be computed and also $\varrho_s$ is not immediately accessible. Hence, we are interested in deriving experimental estimates for $\varrho_s$.

\textbf{Case 1, constant diffusion:}
For this case, \Cref{thm:Koln} yields $\varrho_s=c_\Omega \, \frac{\alpha_s}{|\cP|}$, where here $|\cP|=1$ and $\alpha_s = |\!\cos(s\frac\pi2)|$, i.e., $\alpha_{1.8}=0.95$, $\alpha_{1.5}=0.71$ and $\alpha_{0.12}=0.31$. \Cref{fig:greedy1} shows the convergence history of the greedy algorithm through the dual norm of the residual compared with the actual $\hst{\sh}{\Omega}$-error. We observe exponential rates of convergence of $\varrho^{\text{obs}}_{1.8} = 4.8$, $\varrho^{\text{obs}}_{1.5}=4.2$ and $\varrho^{\text{obs}}_{1.2}=3.3$, which are larger than the corresponding value of $\alpha_s$ -- for $s\in\{ 1.5, 1.8\}$ by a factor of about $5$, for $s=1.2$ by a factor of about 10. We also see that the dual nom of the residual is quite close to the $\hst{\sh}{\Omega}$-error. However, for smaller $s$, the gap between these two values increases. If we estimate the unknown constant $c_\Omega\approx 5$ from the observed convergence rate for $s=1.8$ and calculate the expected convergence rates at $s=1.5$ and $s=1.2$, we get $\varrho_{1.5}=3.6<4.2=\varrho^{\text{obs}}_{1.5}$ and $\varrho_{1.2}=1.6<3.3=\varrho^{\text{obs}}_{1.2}$, i.e., the observed rates of convergence are larger than their estimates from theory.  Since this effect cannot totally be explained by the unknown constant $C_s$ (which grows as $s\to 1+$, but whose value is unknown), we suppose a  a deficiency in the estimate of the coercivity constant in \Cref{thm:uniqueNonConst} for smaller values of $s$.
%-------------------------------------------------------------------------------
\begin{figure}[htbp]
  \centering
  \begin{subfigure}{0.32\textwidth}
    \includegraphics[width=\textwidth]{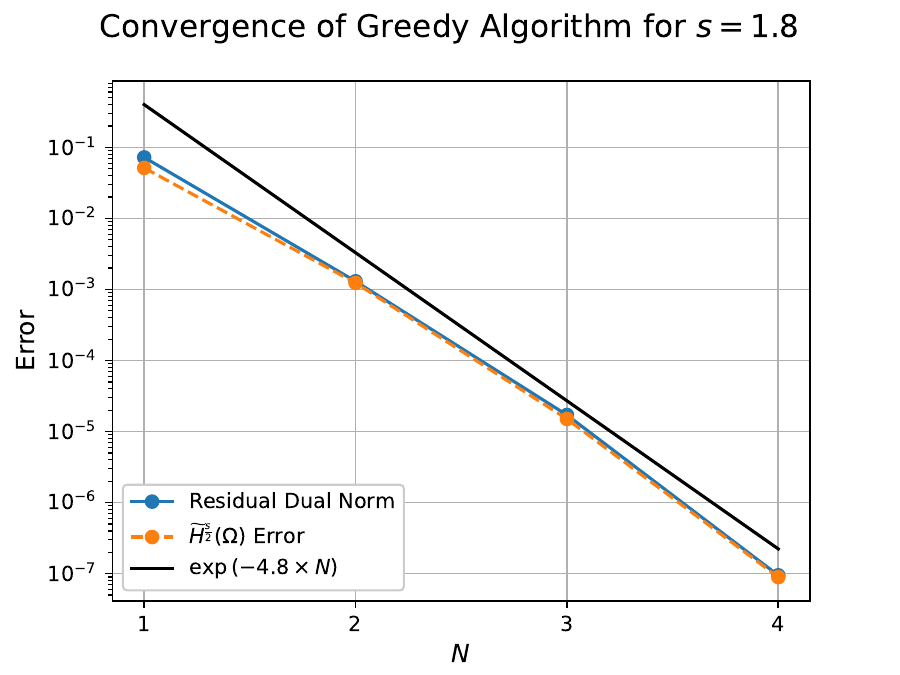}
    \caption{$s=1.8$: $e^{-4.8 n}$.}
    \label{subfig:greedy2_18}
  \end{subfigure}
  \hfill
  \begin{subfigure}{0.32\textwidth}
    \includegraphics[width=\textwidth]{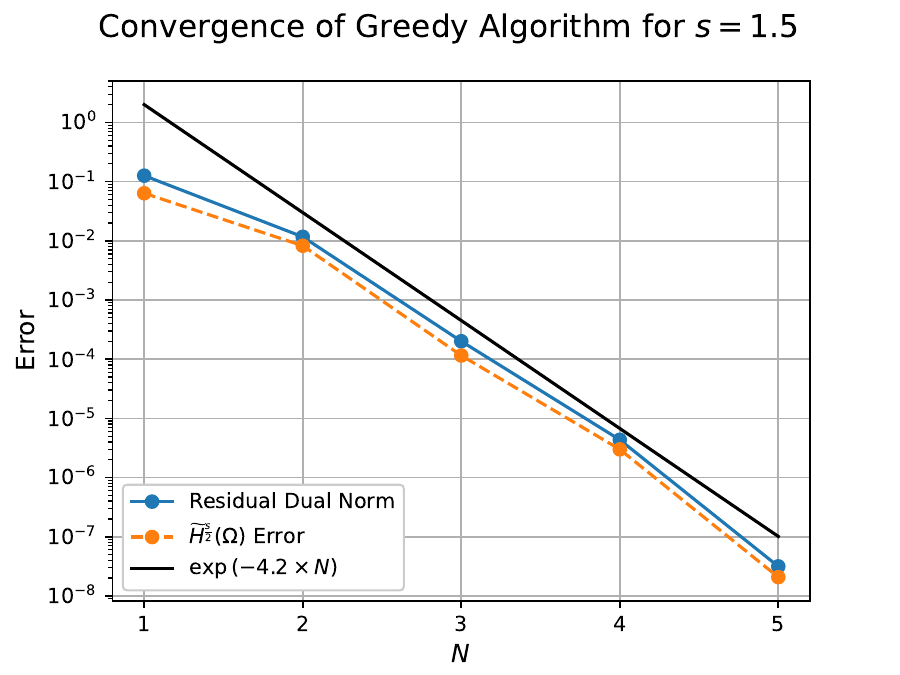}
    \caption{$s=1.5$: $e^{-4.2 n}$.}
    \label{subfig:greedy2_15}
  \end{subfigure}
  \hfill
  \begin{subfigure}{0.32\textwidth}
    \includegraphics[width=\textwidth]{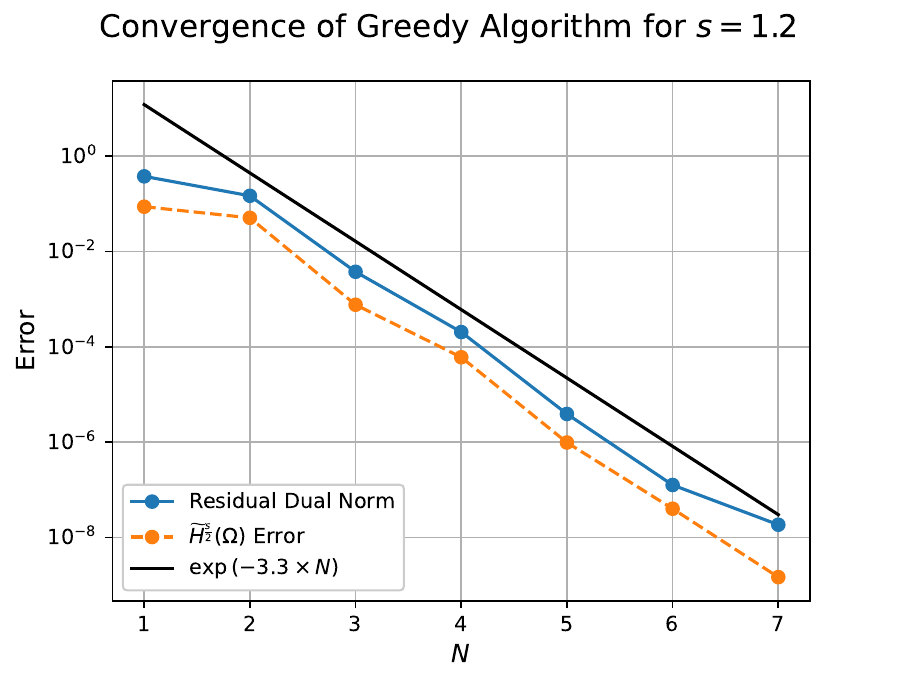}
    \caption{$s=1.2$: $e^{-3.3 n}$.}
    \label{subfig:greedy2_12}
  \end{subfigure}
  \caption{Convergence history of the Greedy Algorithm for the PFPDE with constant diffusion for $s\in\{1.8,1.5,1.2\}$.}
  \label{fig:greedy1}
\end{figure}
%-------------------------------------------------------------------------------

\textbf{Case 2, parameterized diffusion:} For the more general case of non-constant diffusion coefficient in \eqref{eq:greedyCase1}, we cannot apply the estimate in  \Cref{thm:Koln}.  \Cref{fig:greedy2} displays the strong greedy convergence for the different cases of $s$. We observe exponential rates $\varrho^{\text{obs}}_{1.8}=0.96$, $\varrho^{\text{obs}}_{1.5}=0.76$ and $\varrho^{\text{obs}}_{1.2}=0.65$, which (again) worsen as $s$ approaches $1$. We also display the dual norm of the residual and the actual $\hst{\sh}{\Omega}$ error. Again, the gap between the two displays a dependence on the order $s$, worsening with decreasing values of $s$. This effect is even more pronounced as in Case 1. Overall, the rates are worse than in the pure reaction-case and for $s\to 1+$, we even expect to loose exponential decay (which is perfectly inlined with the known effects for the RBM for transport problems, \cite{AGU25,m.ohlbergers.rave2016}).
%-------------------------------------------------------------------------------
\begin{figure}[htbp]
  \centering
  \begin{subfigure}{0.32\textwidth}
    \includegraphics[width=\textwidth]{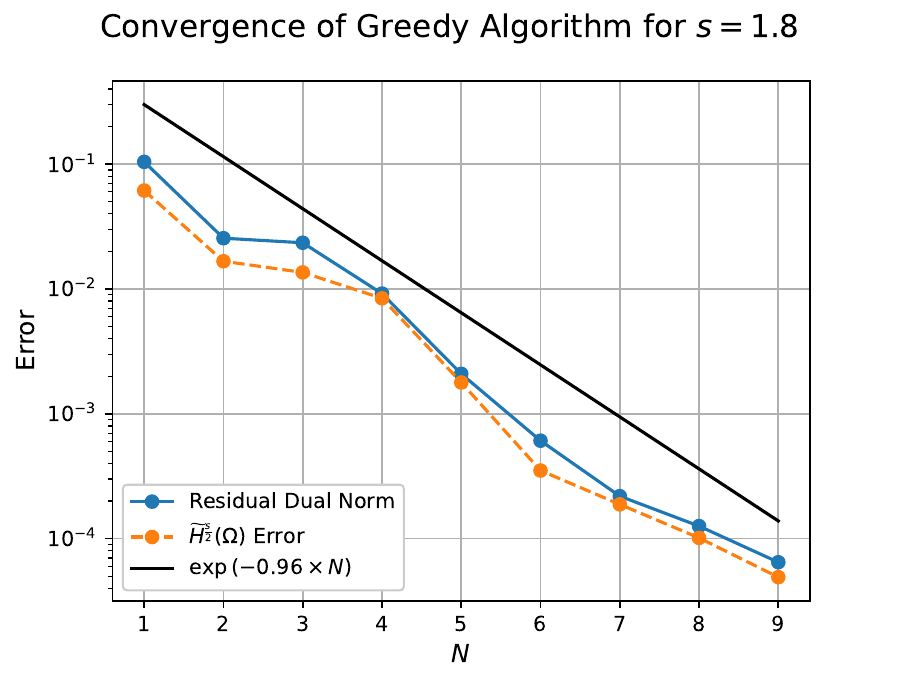}
    \caption{$s=1.8$: $e^{-0.96 n}$.}
    \label{subfig:greedy1_18}
  \end{subfigure}
  \hfill
  \begin{subfigure}{0.32\textwidth}
    \includegraphics[width=\textwidth]{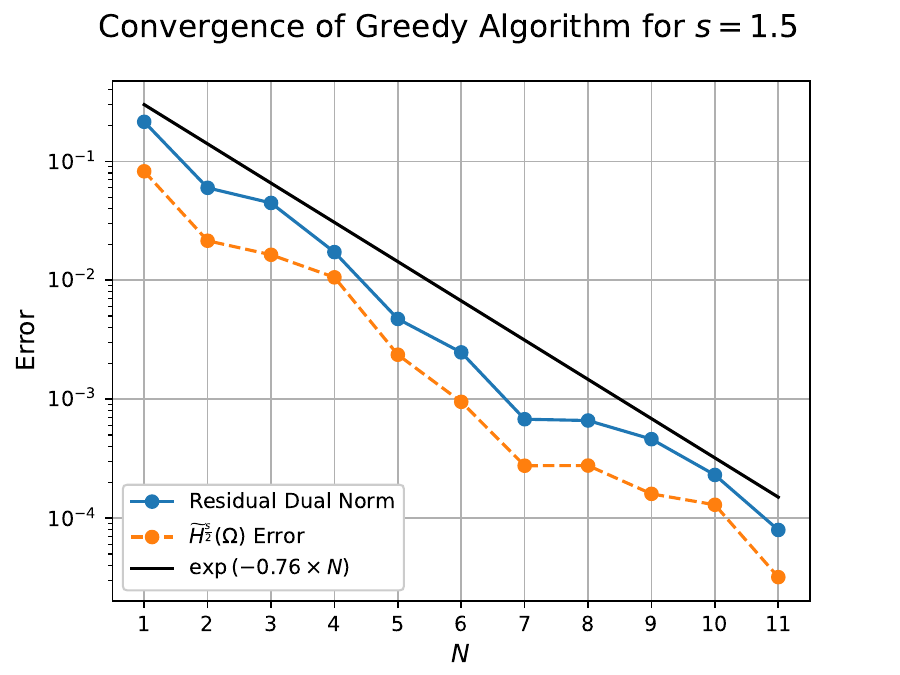}
    \caption{$s=1.5$: $e^{-0.76 n}$.}
    \label{subfig:greedy1_15}
  \end{subfigure}
  \hfill
  \begin{subfigure}{0.32\textwidth}
    \includegraphics[width=\textwidth]{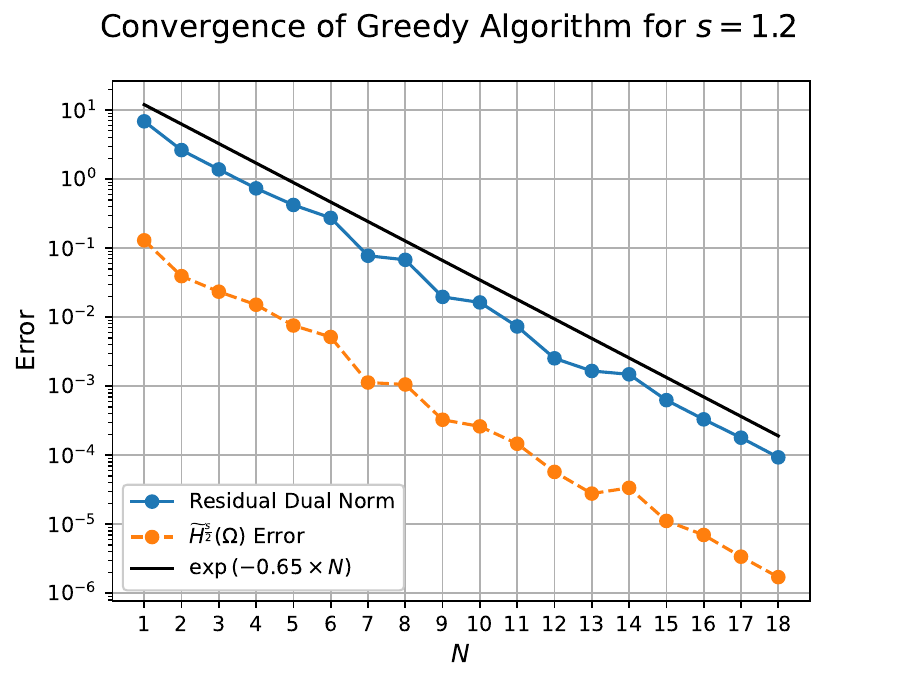}
    \caption{$s=1.2$: $e^{-0.65 n}$.}
    \label{subfig:greedy1_12}
  \end{subfigure}
  \caption{Convergence history of the Greedy Algorithm for the PFPDE \eqref{eq:greedyCase1} for $s\in\{1.8,1.5,1.2\}$.}
  \label{fig:greedy2}
\end{figure}

%-------------------------------------------------------------------------------
\subsubsection{Speedup}
%-------------------------------------------------------------------------------
In addition to the improvement achieved by reducing the number of unknowns in the reduced system, there is also another possible source for reducing computational times, namely the fact that the fractional problem is non-local, i.e., the stiffness matrix is densely populated (and non-symmetric). 

To this end, we consider the above Case 2 and compare computational times of the detailed versus the reduced system and determine the speedup. The results are shown in \Cref{tab:timeComp}, where we compare the computational time for the resolution of a single instance of the fractional differential equation. The total achieved speedup is about $3\times 10^5$, mostly due to the time required to assembly the matrix for the linear system in the full-order model. If we only compare the times for solving the linear system (the affine decomposition shown in \Cref{sec:rb}
could be used to avoid repetitive computation of the matrix), the achieved speed up is approximately $532$. The computational times were obtained by solving the fractional differential equation on a uniform mesh with $2^8=256$ elements and using $11$ elements in the reduced basis on a 2023 MacBook Air with an Apple M2 processor and 24 GB of RAM.
%---------------------------------------------------------------------------------------------------------------
\begin{table}[!htb]
	\begin{tabular}{r|r|r|r} 
	& FEM & RB & Ratio\\ \hline
		DoF 			& 255 	& 11			& 23  \\
		CPU 	& 10 ms 	& 18.8 $\mu$s	& 532 
	\end{tabular}
	\hfil
	\begin{tabular}{l|r}
		\multicolumn{2}{c}{Offline}\\ \hline
		Affine terms  	& 6 \\ \hline
		Assembly		& 5.8s \\ \hline
		RB selection	& 161.6 s
	\end{tabular}
	\caption{Comparison of the FEM and the RBM for \Cref{prob:nonConstCoeff} (with coefficients as in \cref{eq:greedyCase1} and an arbitrary choice of the parameter $\mu$).}
	\label{tab:timeComp}
\end{table}
%---------------------------------------------------------------------------------------------------------------

%-------------------------------------------------------------------------------------------------------
\section{Conclusions and Outlook}
\label{sec:conclusion}
%-------------------------------------------------------------------------------------------------------
In this paper, we considered fractional boundary value problems involving the Riemann-Liouville fractional derivative of order $s\in (1,2)$ with non-constant diffusion coefficients. We derived a variational formulation, proved its well-posedness and introduced a FE discretization mainly following a standard route. By parameterizing diffusion, reaction and right-hand side, we derived a parameterized fractional differential equation for which we introduced model reduction by the RBM. We proved the decay of the Kolmogorov $n$-width and presented results of several numerical experiments.

Some issues for future research are more or less obvious. First, we did not consider the Caputo fractional derivative as in \cite{jin}. The reason is that the Riemann-Liouville derivative puts stronger requirements on the regularity of the resulting solution, which is a more severe challenge for model reduction. Hence, we expect even better results for the Caputo derivative following the corresponding lines in \cite{jin}. We plan to investigate fractional derivatives in space and time, also involving combinations of Riemann-Liouville and Caputo derivatives.

Given data of a subdiffusive process, the rate $s$ of subdiffusion might be unknown. In that case, one could think of using a RBM for identifying the current value of $s$. This means, however, that the order $s$ would need to be a parameter. As we have seen, the solution significantly changes when $s$ varies (and the other data stays the same), so that model reduction is expected to be tough. See, for example, \cite{bonito2020reduced} for such an example of model order reduction for spectral fractional diffusion problems. We devote this aspect also to future research.

\section{Acknowledgments}
The authors would like to thank Professors Andrea Bonito, Martin Stynes and Bangti Jin
for their helpful comments and remarks on an initial version of the present manuscript.
The work on this paper has been funded by the \emph{Federal Ministry for Economic Affairs and Energy of Germany} (BMWE -- Bundesministerium für Wirtschaft und Energie der Bundesrepublik Deutschland).
\printbibliography
\end{document}